\theoremstyle{plain}
\newtheorem{lemma}{Lemma}[section]
\newtheorem{thm}[lemma]{Theorem}
\newtheorem{prop}[lemma]{Proposition}
\newtheorem{cor}[lemma]{Corollary}
\theoremstyle{definition}
\newtheorem{defi}[lemma]{Definition}
\newtheorem{example}[lemma]{Example}
\newtheorem{rem}[lemma]{Remark}
\numberwithin{equation}{section}
\newcommand{\enum}{\rm{(\roman*)}}
\newcommand{\N}{\ensuremath {\mathbb{N}}}
\newcommand{\Z} {\ensuremath {\mathbb{Z}}}
\newcommand{\R} {\ensuremath {\mathbb{R}}}
\newcommand{\calA}{\ensuremath\mathcal{A}}
\newcommand{\calB}{\ensuremath\mathcal{B}}
\newcommand{\calC}{\ensuremath\mathcal{C}}
\newcommand{\calL}{\ensuremath\mathcal{L}}
\newcommand{\calH}{\ensuremath\mathcal{H}}
\newcommand{\frakX} {\ensuremath {\mathfrak{X}}}
\newcommand{\BAc}{\ensuremath\mathsf{BAc}}
\newcommand{\op}{\mathrm{op}}
\newcommand{\sn}{\mathrm{sn}}
\newcommand{\gp}{\mathrm{gp}}
\newcommand{\un}{\mathrm{un}}
\def\rho{\varrho}
\def\phi{\varphi}
\DeclareMathOperator{\Set}{Set}
\DeclareMathOperator{\Top}{Top}
\DeclareMathOperator{\Mod}{Mod}
\DeclareMathOperator{\SSet}{SSet}
\DeclareMathOperator{\Grpd}{Grpd}
\DeclareMathOperator{\Cat}{Cat}
\DeclareMathOperator{\Hom}{Hom}
\DeclareMathOperator{\Sing}{Sing}
\DeclareMathOperator{\Tot}{Tot}
\DeclareMathOperator{\UBC}{UBC}
\DeclareMathOperator{\BBC}{BBC}
\DeclareMathOperator{\tr}{tr}
\DeclareMathOperator{\comp}{comp}
\DeclareMathOperator{\pr}{pr}
\DeclareMathOperator{\im}{im}
\DeclareMathOperator{\id}{id}
\newcommand{\into}{\ensuremath\hookrightarrow}
\newcommand{\onto}{\ensuremath\twoheadrightarrow}
\newcommand{\firstE}{{}^{I}_{}\! E}
\newcommand{\secondE}{{}^{I\! I}_{}\! E}
\newcommand{\firstd}{{}^{I}_{}\! d}
\newcommand{\secondd}{{}^{I\! I}_{}\! d}
\newcommand{\bb}{{\bullet,\bullet}}
\def\bprod#1{%
  \begingroup%
   \if@display%
      \def\@tempa{\sideset{}{^b}\prod_{#1}}%
   \else%
      \def\@tempa{\prod_{#1}^b}%
   \fi%
   \@tempa\endgroup}
\def\lsum#1{%
  \begingroup%
   \if@display%
      \def\@tempa{\sideset{}{^{\ell^1}}\bigoplus_{#1}}%
   \else%
      \def\@tempa{\bigoplus_{#1}^{\ell^1}}%
   \fi%
   \@tempa\endgroup}
\title[The Serre spectral sequence in bounded cohomology]{The Serre spectral sequence in \\ bounded cohomology}
\author[K.~Li]{Kevin Li}
\address{
Fakult\"{a}t f\"{u}r Mathematik, Universit\"{a}t Regensburg, 93040 Regensburg, Germany}
\email{kevin.li@ur.de}
\author[M.~Moraschini]{Marco Moraschini}
\address{
Dipartimento di Matematica, Universit\`{a} di Bologna, 40126 Bologna, Italy}
\email{marco.moraschini2@unibo.it}
\author[G.~Raptis]{George Raptis}
\address{
Department of Mathematics, Aristotle University of Thessaloniki, 54124 Thessaloniki, Greece}
\email{raptisg@math.auth.gr}
\date{\today}
\keywords{Bounded cohomology, Serre spectral sequence, mapping theorem, simplicial volume}
\begin{document}

\begin{abstract}
We construct the analogue of the Serre spectral sequence for the bounded cohomology of simplicial sets with seminormed local coefficients. As applications, we obtain a (non-isometric) generalization of Gromov's mapping theorem and some partial results on the 
simplicial volume of manifold bundles. 
  \end{abstract}

\maketitle

\section{Introduction}

The classical Serre spectral sequence (also called Leray--Serre spectral sequence) \cite{serre1951homologie} is a standard tool in algebraic topology that has been extensively used for many different (co)homological computations. For a Serre fibration $F \to E \to B$ and a local coefficient system $\calA$ on $E$, the Serre spectral sequence is a spectral sequence of the form
$$E^{p,q}_2 = H^p(B; \calH^q(F; \calA)) \Rightarrow H^{p+q}(E; \calA)$$
that assembles the (co)homological information of the fibration. It relates the (co)homology of the fiber, base, and total space of the fibration, and it has been used in numerous applications for obtaining new (co)homological computations from old ones. A standard construction of the Serre spectral sequence -- but not the only one possible -- is obtained from the skeletal filtration of the base $B$, which may be assumed to be a CW-complex, and crucially makes use of excision for the identification of the $E_2$-page.

Bounded cohomology of topological spaces (or groups) is an interesting variant of singular cohomology with deep connections to group theory and differential geometry \cite{vbc}. Bounded cohomology does not satisfy excision which makes its computation difficult in general -- even for tame spaces like the wedge of two circles, the bounded cohomology has been computed only in degrees~$\leq 3$~\cite{brooks1980some, grigorchuk1993some, soma1997bounded}. Moreover, the interaction with delicate functional-analytic matters also makes the development of standard homological algebra methods in bounded cohomology generally more challenging.  

The purpose of this paper is to construct the analogue of the Serre spectral sequence for bounded cohomology and discuss some applications. For several reasons, it will be both convenient and important to work with simplicial sets rather than topological spaces. Firstly, both bounded cohomology of topological spaces and of groups are special cases of bounded cohomology of simplicial sets, via the singular set functor and the nerve functor, respectively. So the context of simplicial sets contains both special cases of interest while also avoiding some of their known technical issues. 

Secondly, whereas topological spaces and simplicial sets can essentially be used interchangeably in algebraic topology, this is not completely true from the viewpoint of bounded cohomology,  simply because weak equivalences of simplicial sets are not bounded cohomology equivalences in general. This means that working with simplicial sets (not necessarily Kan complexes) extends the context of application of the spectral sequence. The use of simplicial methods in bounded cohomology has proved very successful in recent years~\cite{ivanov2020simplicial, Monod-Nariman23,Kastenholz-Sroka23}. 

Moreover, it is also important to work here with general local coefficients in seminormed modules over a normed ring~$R$ (rather than normed, Banach or dual Banach modules). For a simplicial set $X$, a system of local coefficients on $X$ is given by a functor $\calA\colon \pi(X)\to \Mod^\sn_R$ from the fundamental groupoid~$\pi(X)$ of~$X$ to the category of seminormed $R$-modules. The bounded cohomology~$H^*_b(X;\calA)$ is defined analogously to ordinary simplicial cohomology, but using only bounded cochains. Seminormed local coefficients are essentially required for the crucial identification of the $E_2$-page of the spectral sequence, since bounded cohomology groups are only seminormed modules in general. Moreover, if the values of~$\calA$ carry the trivial seminorm, then bounded cohomology recovers ordinary cohomology. Hence the Serre spectral sequence in bounded cohomology may be regarded as a generalization of the classical Serre spectral sequence to a seminormed context.
Our main result is the following:

\begin{thm}[Theorem~\ref{thm:sss}]
\label{thm:sss intro}
	Let~$f\colon X\to Y$ be a Kan fibration of simplicial sets and let~$\calA$ be a seminormed local coefficient system on~$X$.
	Then there is a first-quadrant spectral sequence~$(E^{\bullet,\bullet}_r)_r$ converging to $H^*_b(X;\calA)$.
    We have an isomorphism of seminormed $R$-modules
    \[
        E^{p,q}_2\cong H^p_b(Y;\calH^q_b(F;\calA))
    \]
    in each of the following cases:
    \begin{enumerate}[label=\enum]
        \item $Y$ has only finitely many $n$-simplices in every dimension $n \leq p+1$; 
        \item $q=0$;
        \item $Y$ is connected and for some 0-simplex~$y\in Y_0$ the bounded $R$-cochain complex~$C^\bullet_b(F_y;\calA_y)$ of the fiber~$F_y$ satisfies the uniform boundary condition in degree~$q$.
    \end{enumerate}
\end{thm}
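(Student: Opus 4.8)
The plan is to build the spectral sequence from the skeletal filtration of the base, as in one of the classical constructions, and to replace the (unavailable) use of excision by a direct analysis of the Kan fibrations obtained by pulling back $f$ over the standard simplices of $Y$. So, \emph{Step 1 (the spectral sequence):} pull back the skeletal filtration of $Y$ along $f$, writing $X^{(p)}=f^{-1}(\mathrm{sk}_p Y)$, and filter the bounded cochain complex by
\[
    F^p C^\bullet_b(X;\calA)=\ker\bigl(C^\bullet_b(X;\calA)\longrightarrow C^\bullet_b(X^{(p-1)};\calA)\bigr),
\]
the bounded cochains supported on those simplices $\sigma$ for which $f(\sigma)$ has non-degenerate dimension $\geq p$. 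Since non-degenerate dimension does not increase under faces, this is a decreasing filtration of seminormed subcomplexes; it is exhaustive, and since every $n$-simplex of $X$ lies over a simplex of $Y$ of non-degenerate dimension $\leq n$, we have $F^{n+1}C^n_b(X;\calA)=0$. Hence the associated spectral sequence is a first-quadrant spectral sequence of seminormed $R$-modules converging to $H^*_b(X;\calA)$, with $E_\infty^{p,q}$ the subquotients of the resulting finite filtration on $H^{p+q}_b(X;\calA)$.

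\emph{Step 2 (the $E_1$-page).} For a non-degenerate $p$-simplex $\bar\tau\colon\Delta^p\to Y$ in the set $(NY)_p$ of non-degenerate $p$-simplices, let $\bar\tau^*X\to\Delta^p$ be the pullback fibration and $(\partial\bar\tau)^*X$ its restriction over $\partial\Delta^p$. Sorting the $(p+q)$-simplices of $X$ lying over $\mathrm{sk}_p Y$ according to the non-degenerate simplex of $Y$ they cover and the surjection $[p+q]\onto[p]$ involved identifies
\[
    E_0^{p,q}\cong\bprod{\bar\tau\in(NY)_p} C^{p+q}_b\bigl(\bar\tau^*X,(\partial\bar\tau)^*X;\calA\bigr)
\]
as seminormed $R$-modules, with $d_0$ the product of the relative coboundaries. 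Passing to cohomology gives
\[
    E_1^{p,q}\cong\bprod{\bar\tau\in(NY)_p} H^{p+q}_b\bigl(\bar\tau^*X,(\partial\bar\tau)^*X;\calA\bigr),
\]
and unwinding the construction shows that $d_1$ is induced by the alternating sum of the face inclusions, i.e.\ it is the coboundary of a cochain complex built on the non-degenerate simplices of $Y$.

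\emph{Step 3 (the $E_2$-page).} It remains, for fixed $q$, to identify each $H^{p+q}_b(\bar\tau^*X,(\partial\bar\tau)^*X;\calA)$ with the value of $\calH^q_b(F;\calA)$ at $\bar\tau$, namely $H^q_b(F_y;\calA)$ for the last vertex $y=\bar\tau(p)$, compatibly enough that the products of Step 2 match and $d_1$ becomes the coboundary of $C^\bullet_b(Y;\calH^q_b(F;\calA))$; this yields $E_2^{p,q}\cong H^p_b(Y;\calH^q_b(F;\calA))$. Since $\Delta^p$ retracts simplicially onto the vertex $p$ and Kan fibrations have the covering homotopy property, one checks that $\bar\tau^*X$ is fiberwise simplicially homotopy equivalent over $\Delta^p$ to $F_y\times\Delta^p$; restricting over $\partial\Delta^p$ and using simplicial-homotopy invariance of bounded cohomology of pairs reduces the claim to the bounded suspension statement $H^{p+q}_b(F_y\times\Delta^p,F_y\times\partial\Delta^p;\calA)\cong H^q_b(F_y;\calA)$, naturally in $F_y$. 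For $p=1$ this follows from the long exact sequence of the pair $(F_y\times\Delta^1,F_y\sqcup F_y)$, which involves only evident bounded maps; for $q=0$ the degree-zero fiber cohomology is a genuine seminormed local coefficient system with no suspension to contend with, and a direct argument with the edge homomorphism settles case~(ii). For general $p$ and $q$ the long exact sequence of the pair reduces matters to computing $H^*_b(F_y\times\partial\Delta^p;\calA)$; this is exactly where excision enters classically, and where bounded cohomology is delicate, since $\partial\Delta^p$ is only boundedly acyclic and products with boundedly acyclic simplicial sets need not preserve bounded cohomology. In case~(i) the product over $(NY)_p$ is a finite direct sum, hence exact, so it suffices to establish the isomorphism for each fiber separately and in each relevant degree, with no uniformity needed; in case~(iii) the uniform boundary condition on $C^\bullet_b(F_y;\calA_y)$ in degree $q$ provides precisely the uniform control on the relevant contracting homotopies needed to carry out the product computation and to commute it past the (possibly infinite) product over $(NY)_p$.

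\emph{Main obstacle.} The crux is Step 3: excision being unavailable, the ``local triviality'' that classically pins down $E_2$ must be recovered by hand, and the underlying difficulty is that acyclicity of a bounded cochain complex is strictly weaker than contractibility through bounded homotopies, so an $\ell^\infty$-product of acyclic-but-not-uniformly-acyclic complexes need not be acyclic. Hypothesis~(i) sidesteps this by finiteness, hypothesis~(iii) by the uniform boundary condition, and hypothesis~(ii) avoids it entirely by remaining in fiber-degree $0$.
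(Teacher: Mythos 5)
Your proposal uses the skeletal filtration of the base~$Y$, pulled back along~$f$. This is precisely the construction the paper warns against in the introduction: ``the spectral sequence of Theorem~\ref{thm:sss intro} cannot be obtained from the skeletal filtration of the base. This filtration yields another and less interesting spectral sequence in general.'' The paper instead follows Dress's construction: one forms the bisimplicial set $S_{\bullet,\bullet}f$ of $(p,q)$-simplices of the map~$f$, builds the bounded double cochain complex $C^{\bullet,\bullet}_b(f;\calA)$, and compares the two spectral sequences of the double complex. One of them collapses and identifies the abutment as $H^*_b(X;\calA)$ (Lemma~\ref{lem:firstE}); the other one gives the Serre $E_2$-page (Lemma~\ref{lem:secondE}). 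The whole point of this route is that it never invokes excision or relative cohomology.

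The concrete gap in your argument is Step~3. You reduce the identification of the $E_1$-term to a ``bounded suspension isomorphism''
\[
H^{p+q}_b\bigl(F_y\times\Delta^p,\,F_y\times\partial\Delta^p;\calA\bigr)\cong H^q_b(F_y;\calA_y),
\]
but this is never proved, and it cannot be dispatched by the hypotheses~(i), (ii), (iii) as you suggest: those hypotheses control the passage from a bounded product of cochain complexes to the bounded product of their cohomologies, which is an entirely separate issue from identifying a single relative bounded cohomology group. Moreover, your stated reason that ``$\partial\Delta^p$ is only boundedly acyclic'' is incorrect in the simplicial-set setting the paper works in: $\partial\Delta^p$ is a \emph{finite} simplicial set, so $C^\bullet_b(\partial\Delta^p;R)=C^\bullet(\partial\Delta^p;R)$ and $H^{p-1}_b(\partial\Delta^p;R)\cong R\neq 0$. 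In the topological setting the situation is worse, and the suspension isomorphism outright fails: for the trivial fibration $F\times D^2\to D^2$ with fiber a space $F$ having $H^2_b(F;\R)\neq 0$, the inclusion $F\times S^1\hookrightarrow F\times D^2$ is $\pi_1$-surjective with amenable kernel, hence by Gromov's mapping theorem the restriction $H^*_b(F\times D^2;\R)\to H^*_b(F\times S^1;\R)$ is an isomorphism, so the relative groups vanish in all degrees rather than reproducing $H^{*-2}_b(F;\R)$. Establishing such a suspension statement directly for simplicial sets would in any case require a bounded K\"unneth-type argument that you do not supply, and which the paper never needs.

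Beyond the suspension step, a smaller issue is that your $E_1$-identification tacitly commutes cohomology with the bounded product over $(NY)_p$ before the UBC hypothesis is invoked, and the paper also only develops absolute (not relative) bounded cochain complexes with seminormed local coefficients, so the pair-based machinery you rely on (relative bounded cohomology, its long exact sequence, its simplicial homotopy invariance) is itself additional infrastructure that would have to be set up and checked to be compatible with the seminormed framework. All of this is avoided by the double-complex approach in Section~\ref{sec:sss}: the $E_1$-page of $(\secondE^{\bullet,\bullet}_r)_r$ is directly a bounded product of the absolute fiber cochain complexes $C^\bullet_b(F_\tau;\calA_\tau)$, and the three hypotheses of the theorem enter only through the bounded boundary condition (Proposition~\ref{prop:bounded products}, Example~\ref{ex:finite collections}, Corollary~\ref{cor:weak invariance UBC}).
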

Unlike in ordinary cohomology, additional assumptions are needed in bounded cohomology in order to identify the terms on the $E_2$-page as anticipated. Indeed, while cohomology commutes with arbitrary products, it is not true in general that (bounded) cohomology commutes with \emph{bounded} products of seminormed cochain complexes. However, in each of the cases~(i), (ii), and~(iii), cohomology in fact commutes with the bounded product of bounded cochain complexes of all fibers.
Case~(i) showcases an advantage of the framework of simplicial sets over the setting of topological spaces.
In case~(iii), provided that~$\calA$ takes values in Banach $\R$-modules, the cochain complex $C^\bullet_b(F_y;\calA_y)$ satisfies the uniform boundary condition in degree~$q$ if and only if~$H^q_b(F_y;\calA_y)$ is Banach \cite{Matsu-Mor} (Corollary~\ref{cor:matsumoto-morita}).
If the values of~$\calA$ carry the trivial seminorm, then the uniform boundary condition is trivially satisfied and we recover the Serre spectral sequence in ordinary cohomology.
Functoriality of the construction yields a \emph{comparison map} between the corresponding Serre spectral sequences (Proposition~\ref{prop:comparison ss}).

The Serre spectral sequence of Theorem~\ref{thm:sss intro} specializes to a Serre spectral sequence in bounded cohomology for Serre fibrations of topological spaces (Theorem~\ref{thm:Serre fibrations}) and to the Lyndon--Hochschild--Serre spectral sequence in bounded cohomology for group extensions~\cite{Noskov92,Burger_Monod_2002} (Theorem~\ref{thm:LHS}).

Interestingly, unlike in ordinary cohomology, the spectral sequence of Theorem~\ref{thm:sss intro} cannot be obtained from the skeletal filtration of the base. This filtration yields another and less interesting spectral sequence in general -- note that the induced filtration of $X$ very soon consists of $\pi_1$-isomorphisms which are bounded cohomology equivalences. The proof of Theorem \ref{thm:sss intro} follows a different elegant construction of the Serre spectral sequence due to Dress~\cite{dress}. This does not require excision and seems closer to 
the approach (cf.~\emph{Leray spectral sequence}) that yields the Serre spectral sequence as a hypercohomology spectral sequence.

In analogy with Theorem~\ref{thm:sss intro}, we construct similarly the analogous Serre spectral sequence in $\ell^1$-homology (Section~\ref{sec:l1-homology}).

\medskip
We give several applications of the Serre spectral sequence in bounded cohomology.
Since $n$-connected Kan complexes have trivial bounded cohomology and satisfy the uniform boundary condition in degrees~$\le n$ (Corollary~\ref{cor:n-connected bc}), we deduce from Theorem~\ref{thm:sss intro} that $(n+1)$-equivalences (which have $n$-connected fibers) induce isomorphisms in bounded cohomology in degrees~$\le n$ (Corollary~\ref{cor:n-invariance bc}). 

Similarly, if the higher homotopy groups of a Kan complex are finite, amenable, or boundedly acyclic, then its bounded cohomology is trivial for all Banach coefficients, all dual Banach coefficients, or trivial coefficients in~$\R$, respectively (Lemma~\ref{lem:acyclic fiber}). In addition, for Banach coefficients, the vanishing of bounded cohomology implies the uniform boundary condition.
Hence Theorem~\ref{thm:sss intro} shows that Kan fibrations with fibers as above induce isomorphisms in bounded cohomology:

\begin{thm}[Theorem~\ref{thm:mapping thm class}]
\label{thm:mapping thm}
	Let~$f\colon X\to Y$ be a Kan fibration between connected simplicial sets with connected fiber~$F$.
	Let~$\calA$ be a seminormed local coefficient system on~$X$ taking values in Banach $\R$-modules.
	Suppose that we are in one of the following situations:
	\begin{enumerate}[label=\enum]
		\item\label{item:mapping thm finite} $\pi_k(F)$ is finite for all~$k\ge 1$;
		\item\label{item:mapping thm amenable} $\pi_1(F)$ is amenable and~$\calA$ takes values in dual Banach modules;
		\item\label{item:mapping thm bac} $H^i_b(\pi_1(F);\R)=0$ for all~$i \geq 1$ and~$\calA$ restricts on $F$ to a constant coefficient system at a dual Banach module. 
	\end{enumerate}
	Then the canonical map
	\[
		 H^*_b(Y;\calH^0_b(F;\calA))\to H^*_b(X;\calA)
	\]
	is an isomorphism (of $\R$-modules) in all degrees.
\end{thm}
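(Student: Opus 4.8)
The plan is to apply the Serre spectral sequence of Theorem~\ref{thm:sss intro} to the Kan fibration $f\colon X\to Y$ with coefficients $\calA$, and to show that it degenerates onto the bottom row $q=0$, the resulting edge isomorphism being the canonical map of the statement. The homotopy-theoretic observation that unifies the three hypotheses is that, since $F$ is connected, the groups $\pi_k(F)$ with $k\ge 2$ are abelian, hence amenable, hence boundedly acyclic; therefore in case~\ref{item:mapping thm finite} \emph{all} $\pi_k(F)$ with $k\ge 1$ are finite, in case~\ref{item:mapping thm amenable} all of them are amenable, and in case~\ref{item:mapping thm bac} all of them are boundedly acyclic. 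Moreover, since $Y$ is connected, every fiber $F_y$ with $y\in Y_0$ is a Kan complex homotopy equivalent to $F$, and so enjoys the same property.

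I would then apply Lemma~\ref{lem:acyclic fiber} to each $F_y$ -- with Banach coefficients in case~\ref{item:mapping thm finite}, dual Banach coefficients in case~\ref{item:mapping thm amenable}, and constant dual Banach coefficients in case~\ref{item:mapping thm bac} (reducing, if necessary, from trivial $\R$-coefficients to the constant module in the hypothesis) -- to conclude that $H^q_b(F_y;\calA_y)=0$ for all $q\ge 1$ and all $y\in Y_0$; equivalently, the seminormed local coefficient system $\calH^q_b(F;\calA)$ on $Y$ vanishes for every $q\ge 1$. Next I would pin down the $E_2$-page. For $q=0$ the identification $E^{p,0}_2\cong H^p_b(Y;\calH^0_b(F;\calA))$ is part~(ii) of Theorem~\ref{thm:sss intro}; for $q\ge 1$, since $\calA$ is Banach-valued and the vanishing group $H^q_b(F_y;\calA_y)=0$ is in particular Banach, the Matsumoto--Morita criterion (Corollary~\ref{cor:matsumoto-morita}) shows that $C^\bullet_b(F_y;\calA_y)$ satisfies the uniform boundary condition in degree $q$, so part~(iii) of Theorem~\ref{thm:sss intro} applies and gives $E^{p,q}_2\cong H^p_b(Y;\calH^q_b(F;\calA))=0$. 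Being a first-quadrant spectral sequence concentrated on the line $q=0$, it has all differentials $d_r$ with $r\ge 2$ equal to zero for want of a nonzero source or target, so $E_2=E_\infty$; the abutment filtration of $H^p_b(X;\calA)$ then has a single nonzero stage and the edge homomorphism $H^p_b(Y;\calH^0_b(F;\calA))=E^{p,0}_2\to H^p_b(X;\calA)$ is an isomorphism of $\R$-modules in every degree $p$.

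It remains to identify this edge homomorphism with the canonical map: by the functoriality built into Dress's construction, the filtration-degree-$0$ edge map is the composite of the pullback along $f$ with the map induced by the natural transformation $f^*\calH^0_b(F;\calA)\to\calA$ that includes the fiberwise bounded-cohomology invariants into $\calA$, and this composite is precisely the asserted map $H^*_b(Y;\calH^0_b(F;\calA))\to H^*_b(X;\calA)$. I expect the genuine obstacle to be concentrated entirely in Lemma~\ref{lem:acyclic fiber}: the fiber vanishing $H^q_b(F_y;\calA_y)=0$ for $q\ge 1$ is a Gromov-type statement, reducing the bounded cohomology of $F_y$ to that of $\pi_1(F_y)$ via the Postnikov tower (using the triviality of the bounded cohomology of simply connected Kan complexes, cf.\ Corollary~\ref{cor:n-connected bc}) and then the vanishing of bounded cohomology of finite, amenable, or boundedly acyclic groups. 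Granting that input, the spectral-sequence collapse argument above is routine; a secondary but non-trivial point is checking that the edge map agrees on the nose with the canonical comparison map.
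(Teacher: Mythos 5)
Your proposal is correct and follows essentially the same route as the paper: the paper routes the statement through Theorem~\ref{thm:mapping thm class} (applied to the classes $\mathfrak{X}$ of Example~\ref{ex:classes of coeffs}), which is itself proved by combining Lemma~\ref{lem:acyclic fiber} (Postnikov-tower reduction of the bounded acyclicity of the fiber to that of its homotopy groups, using that these are abelian in degrees $\ge 2$) with Corollary~\ref{cor:acyclic fibre} (Matsumoto--Morita to get $\UBC^q$ from vanishing, then the collapse of the spectral sequence of Theorem~\ref{thm:sss}); your write-up simply inlines this chain and spells out the collapse and edge-homomorphism details. The one place you are slightly loose is case~\ref{item:mapping thm bac}: the hypothesis is bounded acyclicity of $\pi_1(F)$ with trivial $\R$-coefficients, but what is needed to run Lemma~\ref{lem:acyclic fiber} with the class $\mathfrak{X}$ of dual Banach modules and identity morphisms is bounded acyclicity with constant coefficients in an arbitrary dual Banach module; the equivalence of these (alluded to but phrased as a ``reduction'' rather than a boost) is the nontrivial input recorded in Example~\ref{ex:classes of coeffs}~(iii) via the cited reference, which the paper also relies on at exactly this point.
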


Theorem~\ref{thm:mapping thm}~\ref{item:mapping thm amenable} recovers (a non-isometric version of) Gromov's mapping theorem, which is a fundamental result in the theory of bounded cohomology:

\begin{thm}[\cite{vbc}]
\label{thm:Gromov mapping thm}
    Let $f\colon E\to B$ be a map between path-connected topological spaces such that the induced map $\pi_1(f)\colon \pi_1(E)\to \pi_1(B)$ is surjective with amenable kernel.
    Then the induced map
    \[
        H^*_b(f)\colon H^*_b(B;\R)\to H^*_b(E;\R)
    \]
    is an isometric isomorphism in all degrees.
\end{thm}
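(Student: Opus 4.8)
The plan is to derive the underlying $\R$-module isomorphism from Theorem~\ref{thm:mapping thm}~\ref{item:mapping thm amenable}, and then to upgrade it to an isometry by invoking amenability a second time through the classical averaging argument; this last step is where the real work lies, since the spectral sequence only yields an algebraic statement.

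First I would reduce to a Kan fibration. By definition $H^*_b(B;\R)=H^*_b(\Sing B;\R)$, $H^*_b(E;\R)=H^*_b(\Sing E;\R)$ and $H^*_b(f)=H^*_b(\Sing f)$. The map $\Sing f$ is a map of Kan complexes; I would factor it as $\Sing E\xrightarrow{j}X\xrightarrow{p}\Sing B$ with $j$ anodyne and $p$ a Kan fibration. Then $X$ is a Kan complex (being the total space of a Kan fibration over the Kan complex $\Sing B$), and $j$ is a weak equivalence between Kan complexes, hence a simplicial homotopy equivalence, hence induces an isometric isomorphism in bounded cohomology. So it suffices to treat $p\colon X\to Y$ with $Y:=\Sing B$ and $X$ connected.

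Next I would analyze the fibre $F=F_y$ of $p$ over a basepoint, which is the homotopy fibre of $\Sing f$, hence of $f$. The long exact sequence of the Kan fibration $F\to X\to Y$, together with surjectivity of $\pi_1(p)$ (equivalent to that of $\pi_1(f)$, as $j$ is a homotopy equivalence), gives $\pi_0(F)=\ast$, so $F$ is connected, and a short exact sequence $1\to N\to\pi_1(F)\to\ker\pi_1(p)\to 1$ in which $N$ is a quotient of $\pi_2(Y)=\pi_2(B)$, hence abelian, and $\ker\pi_1(p)\cong\ker\pi_1(f)$ is amenable by hypothesis. Thus $\pi_1(F)$ is an extension of the amenable group $\ker\pi_1(p)$ by the abelian (hence amenable) group $N$, so $\pi_1(F)$ is amenable. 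Taking $\calA$ to be the trivial coefficient system $\R$, which is a dual Banach $\R$-module, Theorem~\ref{thm:mapping thm}~\ref{item:mapping thm amenable} applies and yields an $\R$-module isomorphism $H^*_b(Y;\calH^0_b(F;\calA))\to H^*_b(X;\calA)$. Since $F$ is connected and $\calA$ is trivial, $\calH^0_b(F;\calA)$ is the trivial local system $\R$ on $Y$, and the canonical map is the edge homomorphism; under the identifications above it is precisely $H^*_b(f)\colon H^*_b(B;\R)\to H^*_b(E;\R)$, which is therefore an isomorphism of $\R$-modules.

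It remains to prove that $H^*_b(f)$ is isometric. One inequality is immediate, since contravariant functoriality of bounded cohomology is norm non-increasing: $\|H^*_b(f)(\alpha)\|\le\|\alpha\|$. For the reverse inequality I would use amenability of $K:=\ker\pi_1(f)$ as in the classical proof \cite{vbc}: via the isometric identification $H^*_b(-;\R)\cong H^*_b(\pi_1(-);\R)$ one identifies $H^*_b(f)$ with inflation along $\pi_1(f)$, and an invariant mean on $K$ produces a norm-non-increasing transfer (corestriction) on the bounded cochain complexes that is a left inverse to inflation; equivalently, amenability of $K$ makes $\ell^\infty$ of the powers of $\pi_1(E)/K$ a relatively injective $\pi_1(E)$-resolution of $\R$ whose comparison with the standard resolution realizes inflation isometrically. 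Combining such a norm-non-increasing left inverse with the bijectivity established above forces $\|H^*_b(f)(\alpha)\|=\|\alpha\|$ for all $\alpha$. As indicated, this amenable-averaging input is the crux of the argument: Theorem~\ref{thm:mapping thm} by itself only gives an algebraic isomorphism, and the isometry genuinely requires the averaging rather than the spectral sequence.
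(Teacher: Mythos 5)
The paper does not actually prove Theorem~\ref{thm:Gromov mapping thm}: it is presented as a cited result (attributed to Gromov~\cite{vbc}, with proofs by Gromov via multicomplexes and Ivanov via relatively injective resolutions), and the paper explicitly acknowledges that its Serre spectral sequence machinery recovers only the \emph{non-isometric} version via Theorem~\ref{thm:mapping thm}~\ref{item:mapping thm amenable}. So there is no proof in the paper against which to compare your argument; you should be aware that you are attempting something the authors deliberately do not claim.

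Your first half is correct and matches what the paper says its machinery yields: the reduction to a Kan fibration $p\colon X\to Y$ with $j$ anodyne, the identification of the fibre $F$ with the homotopy fibre of $f$, the long-exact-sequence computation showing $F$ is connected with $\pi_1(F)$ an extension of $\ker\pi_1(f)$ (amenable by hypothesis) by a quotient of $\pi_2(B)$ (abelian, hence amenable), so $\pi_1(F)$ is amenable, and the application of Theorem~\ref{thm:mapping thm}~\ref{item:mapping thm amenable} with $\calA=\R$. Your identification of $\calH^0_b(F;\R)$ with the trivial system $\R$ is also correct (Proposition~\ref{prop:degree zero} with connected $F$ and trivial coefficients). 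This establishes the $\R$-module isomorphism. Your diagnosis that the isometry is precisely what the spectral sequence cannot supply is also correct and is the point the paper itself makes.

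The gap is in the reduction you use for the isometry. You invoke ``the isometric identification $H^*_b(-;\R)\cong H^*_b(\pi_1(-);\R)$'' in order to transport the group-level transfer to spaces. But that isometric identification is exactly the statement of Theorem~\ref{thm:Gromov mapping thm} for the classifying map $X\to B\pi_1(X)$, whose $\pi_1$-kernel is trivial (the homotopy fibre is the universal cover). So this input is not something available ``for free'' nor derivable from the paper's machinery (which again would only give an $\R$-module isomorphism); it requires the same classical technology (relatively injective resolutions or multicomplexes) that your sketch purports to bypass. The upshot is that your argument does not produce a genuinely new proof of the isometric statement out of the paper's spectral sequence plus elementary averaging: it proves the \emph{non-isometric} version from the paper's Theorem~\ref{thm:mapping thm} and then re-derives the isometry by citing the full classical theorem (for the classifying maps of both $E$ and $B$) together with the group-level transfer. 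That chain of deductions is logically sound if one grants the classical space-to-group isometric identification as an external black box, but it should be stated as such; as written, the dependence looks weaker than it actually is, and a reader could mistake the argument for one that reduces the isometry to the spectral sequence plus a single averaging step.
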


Theorem~\ref{thm:Gromov mapping thm} was proved by Gromov~\cite{vbc} using multicomplexes~\cite{frigeriomoraschini} and by Ivanov \cite{Ivanov85,Ivanov17} using relatively injective resolutions. More recently, Ivanov also provided a proof of Theorem~\ref{thm:mapping thm}~\ref{item:mapping thm amenable} for simplicial sets~\cite[Theorem~7.6]{ivanov2020simplicial}.
The advantage of our proof using the Serre spectral sequence is its simplicity and generality.
However, our construction does not provide \emph{isometric} isomorphisms. Theorem~\ref{thm:mapping thm}~\ref{item:mapping thm finite} and~\ref{item:mapping thm bac} provide (partial) simplicial versions of results by the second and third authors~\cite[Theorem~B and Theorem~C]{Moraschini-Raptis} on the bounded cohomology of discrete groups and of topological spaces.

In another direction, we hope that the Serre spectral sequence will have applications to the computation of the simplicial volume of manifold bundles. We discuss some 
partial results in Section~\ref{sec:sv}. Specifically, under appropriate conditions, we show that the (non-)vanishing of the simplicial volume of the fiber and the base manifolds implies the (non-)vanishing of the simplicial volume of the total space (Proposition~\ref{prop:sv vanishing} and Proposition~\ref{prop:sv positive}).

\medskip
\noindent\textbf{Organization of the paper.} 
In Section~\ref{sec:bc:sset:local:coeff} we define the bounded cohomology of simplicial sets with seminored local coefficients. We also discuss the compatibility of bounded cohomology with bounded products and the role of the uniform boundary condition.
In Section~\ref{sec:weak invariance bc} we prove the homotopy invariance of bounded cohomology and of the uniform boundary condition.
Section~\ref{sec:sss} is devoted to the construction of the Serre spectral sequence in bounded cohomology (Theorem~\ref{thm:sss intro}).
The Serre spectral sequence in $\ell^1$-homology is outlined in Section~\ref{sec:l1-homology}.
Section~\ref{sec:applications} consists of several applications of Theorem~\ref{thm:sss intro}, including the proof of Theorem~\ref{thm:mapping thm} and some partial results concerning the simplicial volume of manifold bundles.

\section{Bounded cohomology of simplicial sets with local coefficients}\label{sec:bc:sset:local:coeff}

\subsection{Preliminaries on simplicial sets}\label{subsec:sset} We recall some basic notions of simplicial homotopy theory. The interested reader should consult some of the standard references (e.g., \cite{GZ_fractions, hovey2007model, goerss2009simplicial}) for more details. 

We denote by $\Delta$ the \emph{simplex category}, that is, the category whose objects are the finite ordinals $
[n] \coloneqq \{0 < \cdots < n\}
$
and the morphisms are order-preserving functions $[m] \to [n]$.
A \emph{simplicial set} is a functor 
$
X \colon \Delta^{\textup{op}} \to \Set
$
from the opposite of the simplex category to the category of sets. A \emph{(simplicial) map} $f \colon X \to Y$ between simplicial sets is a natural transformation of functors. We denote by $\SSet$ the \emph{category of simplicial sets}. Limits and colimits in $\SSet$ are defined pointwise by limits and colimits of sets. 

Given a simplicial set $X$, we denote by $\partial_i^n \colon X([n]) \to X([n-1])$ the $i$-\emph{th face map} which is induced by the inclusion $\delta^n_i \colon [n-1] \to [n]$ that skips the $i$-th entry in the image.
The \emph{standard $n$-simplex} $\Delta^n$ is the simplicial set represented by $[n] \in \Delta$, that is, the functor $\textup{Hom}_{\Delta}(-,[n]) \colon \Delta^{\op} \to \Set$. By the Yoneda lemma, the \emph{set of $n$-simplices} $X_n \coloneqq X([n])$ of a simplicial set $X$ is naturally isomorphic to the set of maps $\Delta^n \to X$. In particular, the Yoneda embedding $j \colon \Delta \to \SSet$, $[n] \mapsto \Delta^n$, is full and faithful.

For two simplicial sets $X$ and $Y$, the (simplicial) \emph{mapping space} $Y^X \colon \Delta^{\op} \to \Set$ is the simplicial set whose $n$-simplices are given by maps $X \times \Delta^n \to Y$. These mapping space objects turn the category of simplicial sets into a cartesian closed category. 

\begin{example}[Nerve of a category]\label{defi:nerve}
Let $C$ be a small category (i.e., a category such that the collection of morphisms forms a set instead of a proper class). 
The \emph{nerve}~$N(C)$ of $C$ is a simplicial set whose $n$-simplices are given by functors $[n] \to C$, where the poset~$[n]$ is viewed as a category. 
Hence an $n$-simplex of~$N(C)$ is a sequence of $n$ composable morphisms in $C$. The simplicial operators are defined by precomposition of functors. This construction extends to a functor 
$$N \colon \Cat \rightarrow \SSet, \ \quad \ C \mapsto (N(C) \colon \Delta^{\mathrm{op}} \to \mathrm{Set}, \ [n] \mapsto \Hom_{\Cat}([n], C)),$$
where $\Cat$ denotes the \emph{category of small categories and functors}. The nerve functor is full and faithful.
\end{example}

\begin{example}[Simplex category of a simplicial set]
\label{defn:simplex category}
Let $X$ be a simplicial set. The \emph{simplex category} $\Delta \shortdownarrow X$ of $X$ is the category whose objects are the maps $\sigma \colon \Delta^n \to X$, $n \geq 0$, that is, the simplices of~$X$, and a morphism $(\sigma \colon \Delta^n \to X) \to (\tau \colon \Delta^m \to X)$ is given by a map $u \colon [n] \to [m]$ in~$\Delta$ such that $\tau\circ j(u)=\sigma$.   
\end{example}

Every simplicial set $X$ is the colimit of the canonical diagram: 
\[
(\Delta \shortdownarrow X) \to \Delta \xrightarrow{j} \SSet, \ \ (\sigma \colon \Delta^n \to X) \mapsto \Delta^n,
\]
where the first functor is the obvious forgetful functor. 
This implies that the Yoneda embedding $j \colon \Delta \to \SSet$ has the following universal property: for every category~$C$ with small colimits and a functor (cosimplicial object) $\alpha \colon \Delta \to C$, there is a colimit-preserving functor, unique up to unique isomorphism,
$$F_\alpha \colon \SSet \rightarrow C$$
with $F_\alpha \circ j = \alpha$. Specifically, $F_\alpha$ is given on objects by 
$$F_\alpha(X) = \mathrm{colim}(\Delta \shortdownarrow X \to \Delta \xrightarrow{\alpha} C).$$
The functor~$F_\alpha$ admits a right adjoint $G_\alpha \colon C \to \SSet$ which (by adjunction) is defined on objects by $G_\alpha(Y)_n = \hom_{C}(\alpha([n]), Y)$. 

Applying this construction to the usual cosimplicial object in topological spaces,
$$\Delta \to \Top, \ \ [n] \mapsto |\Delta^n|,$$ 
given by the standard topological simplices, we obtain the usual adjunction 
$$|\cdot| \colon \SSet \rightleftarrows \Top \colon \Sing$$
between the geometric realization and the singular set functors. 
The geometric realization functor sends a simplicial set $X$ to a topological space $|X|$ which has the structure of a CW-complex with one $n$-cell for each non-degenerate $n$-simplex of~$X$.
The singular set $\Sing(X) \colon \Delta^{\textup{op}} \to \Set$ of a topological space~$X$ is the simplicial set whose $n$-simplices are given by the continuous maps $|\Delta^n| \to X$.

On the other hand, the (tautological) cosimplicial object in $\Cat$, 
$$\Delta \to \Cat, \ \ [n] \mapsto [n],$$
yields the adjunction
$$\tau \colon \SSet \rightleftarrows \Cat \colon N$$
where the left adjoint functor $\tau$ is known as the \emph{fundamental category}.

For every $k \leq n$ we define the $k$-\emph{th horn of} $\Delta^n$ as the subobject $\Lambda^n_k$ of $\Delta^n$ obtained by removing the $n$-simplex $\id_{[n]} \in (\Delta^n)_n$  and its $k$-th face in $(\Delta^n)_{n-1}$. We denote by $i\colon \Lambda^n_k\hookrightarrow \Delta^n$ the inclusion. A map $f \colon X \to Y$ of simplicial sets is a \emph{Kan fibration} if for every $n \geq 1$, $k\in \{0,\ldots,n\}$, and every commutative square in $\SSet$ 
    \[\begin{tikzcd}
        \Lambda^n_k\ar{rr}{s}\ar{d}[swap]{i}
        && X\ar{d}{f}
        \\
        \Delta^n\ar{rr}[swap]{\tau}\ar[dashed]{urr}{\widetilde{\tau}}
        && Y
    \end{tikzcd}\]
there is a map $\widetilde{\tau} \colon \Delta^n \to X$ making the diagram commutative. A simplicial set $X$ is a \emph{Kan complex} if the map $X \to \Delta^0$ is a Kan fibration. For every Serre fibration $p \colon E \to B$ of topological spaces, the induced map $\Sing(p) \colon \Sing(E) \to \Sing(B)$ is a Kan fibration~\cite[p.~11]{goerss2009simplicial}. In particular, the singular set of a topological space is a Kan complex. \ Moreover, for every Kan fibration $f \colon X \to Y$ of simplicial sets, the map $|f| \colon |X| \to |Y|$ is Serre fibration~\cite[Theorem~I.10.10]{goerss2009simplicial}. 

Let~$f,g\colon X\to Y$ be maps of simplicial sets.
A \emph{homotopy from~$f$ to~$g$} is a map $H \colon X \times 
\Delta^1 \to Y$ such that $H \circ i_0 = f$ and $H \circ i_1 = g$, where 
$i_0, i_1 \colon X \to X \times \Delta^1$ are the obvious inclusions. 
If $Y$ is a Kan complex, then the notion of homotopy of maps $X \to Y$ defines an equivalence relation~\cite[Corollary~I.6.2]
{goerss2009simplicial} and the same also holds for homotopies of maps relative to a subcomplex. The equivalence classes are called \emph{homotopy classes of maps} $X \to Y$. Given a Kan complex $X$, a basepoint~$x_0\in X_0$, and $n > 0$, the $n$-\emph{th simplicial homotopy group}~$\pi_n(X, x_0)$ of~$X$ is defined to be the set of homotopy classes of maps $(\Delta^n, \partial \Delta^n) \to (X, x_0)$. Moreover, we define $\pi_0(X)$ as the set of homotopy
 classes
of vertices (path components) in $X$. One can define a group operation on $\pi_n(X, x_0)$ for $n \geq 1$ and show that these groups are abelian for $n \geq 2$~\cite[Corollary~I.7.7]{goerss2009simplicial}.
The simplicial homotopy groups of a Kan complex~$X$ agree with the homotopy groups of its geometric realization~\cite[Proposition~3.6.3]{hovey2007model}:
\[
\pi_n(X, x_0) \cong \pi_n(|X|, |x_0|).
\]

Given a Kan complex~$X$, there are compatible maps $\tau_n \colon X \to P_n(X)$, $n \geq 0$, which fit into a diagram (tower)
\begin{equation}
\label{eqn:Postnikov}
X \to \cdots \to P_{n+1}(X) \to P_n(X) \to \cdots \to P_0(X)
\end{equation}
that models the Postnikov tower of $X$. We will consider a specific model due to Moore:

\begin{example}[Moore--Postnikov tower]\label{example:moore:postnikov}
There exists a model for the Postnikov tower of a Kan complex~$X$ with the following properties~\cite[Section~VI.3]{goerss2009simplicial}:
\begin{enumerate}[label=\enum]
   \item $P_n(X)$ is a Kan complex and $P_{n+1}(X) \to P_n (X)$ is a Kan fibration for every $n \geq 0$;
   \item the canonical map $X \to \mathrm{lim}_n P_n(X)$ is an isomorphism;
   \item the map $\tau_n \colon X \to P_n(X)$ is a surjective Kan fibration which is an isomorphism in simplicial degrees~$\leq n$;
    \item the map $\tau_n \colon X \to P_n(X)$ induces an isomorphism $$\pi_i(X, x) \xrightarrow{\cong} \pi_i(P_n(X), \tau_n(x))$$ for all $i \leq n$ and $x \in X$;
    \item we have $\pi_i(P_n(X), y) = 0$ for all $i > n$ and $y \in P_n(X)$.
\end{enumerate}
\end{example}

\subsection{Seminormed local coefficient systems}\label{subsec:seminormed local coeff}
    Throughout this article, we work over a normed ring $(R,|\cdot |)$, e.g.,~$\Z$ or~$\R$ with the usual norms. 
    A \emph{seminormed $R$-module~$(A, \|\cdot\|_A)$} is an $R$-module $A$ together with a map $\|\cdot \|_A\colon A\to \R_{\geq 0}$ satisfying
	\[
		\|r\cdot a_1+a_2 \|_A\le |r|\cdot \|a_1 \|_A+\|a_2\|_A
	\]
	for all~$r\in R$ and $a_1,a_2\in A$. 
 
Let~$\Mod^\sn_R$ denote the category whose objects are seminormed $R$-modules and whose morphisms are seminorm non-increasing $R$-linear maps, that is, $R$-linear maps $f \colon (A, \|\cdot\|_A) \to (B, \|\cdot\|_B)$ such that $\|f(a)\|_B \leq \|a\|_A$ for all $a \in A$. The forgetful functor to the category of $R$-modules (forgetting the norm) 
$$u\colon \Mod^\sn_R\to \Mod_R$$ 
admits a right adjoint $$\tr\colon \Mod_R\to \Mod^\sn_R$$ which equips an $R$-module $A$ with the trivial seminorm $\|\cdot\|_{A, \tr} \colon A \to \R_{\geq 0}$ that takes the value $0$ everywhere. The unit of the adjunction
\begin{equation}
\label{eqn:unit}
	\eta\colon \id_{\Mod^\sn_R}\Rightarrow \tr\circ u
\end{equation}
is given by the map $(A, \|\cdot\|_A)\to (A, \|\cdot \|_{A, \tr})$ that is the identity on~$A$. 

We recall that a \emph{groupoid} is a small category such that every morphism is an isomorphism. The category of groupoids $\Grpd$ is the full subcategory of the category of small categories $\Cat$ that is spanned by the groupoids. The inclusion functor $\Grpd \hookrightarrow \Cat$ admits a left adjoint $$(-)^{\gp} \colon \Cat \to \Grpd$$ which is defined on objects by inverting all the morphisms of a small category.

\begin{defi}[Fundamental groupoid of a simplicial set]
    The left adjoint of the composite adjunction
$$\pi \colon \SSet\overset{\tau}{\underset{N}\rightleftarrows} \Cat \overset{(-)^{\gp}}{\rightleftarrows} \Grpd \colon N$$
is the \emph{fundamental groupoid functor}.
\end{defi}

More explicitly, given a simplicial set $X$, the fundamental groupoid $\pi(X)$ has as objects the $0$-simplices $X_0$ of $X$ and is generated (as groupoid) by the $1$-simplices $X_1$ with a relation $\partial_0 \sigma \circ \partial_2 \sigma \sim \partial_1 \sigma$ for any $2$-simplex $\sigma \in X_2$. For a topological space~$E$, the fundamental groupoid of the singular set $\Sing(E)$ recovers the fundamental groupoid of $E$. For a Kan complex $X$, the group of automorphisms of a vertex $x_0 \in X_0$ in $\pi(X)$ is isomorphic to the \emph{fundamental group} $\pi_1(X, x_0)$ of $X$ at the basepoint $x_0$ -- and this agrees with the fundamental group $\pi_1(|X|, |x_0|)$ of the geometric realization~$|X|$ of $X$. 

\begin{defi}[Seminormed local coefficient system]
	Let~$X$ be a simplicial set.
	A \emph{seminormed local coefficient system~$\calA$ on~$X$} is a functor $\calA \colon \pi(X) \to \Mod^\sn_R$.
\end{defi}

Since every morphism in~$\pi(X)$ is invertible and the morphisms in $\Mod^\sn_R$ are norm non-increasing, it follows that $\calA$ sends every morphism to an isometric isomorphism. 
If $X$ is the singular set of a topological space, this definition agrees with the usual definition of a (seminormed) local coefficient system on a topological space~\cite{fauser2019parametrised}. 

The functoriality of the fundamental groupoid $\pi(-)$ yields a pullback operation for local coefficients: for a map~$f\colon X\to Y$ of simplicial sets and a seminormed local coefficient system~$\calB$ on~$Y$, we denote by $f^*\calB\coloneqq \calB\circ \pi(f)$ the seminormed local coefficient system on~$X$ that is defined by precomposition with the functor $\pi(f) \colon \pi(X) \to \pi(Y)$.

\begin{defi}[Category of simplicial sets with local coefficients]
\label{defi:category:Ldual}
	We define the category~$\mathcal{L}^*$ as follows: 
	\begin{itemize}
		\item the objects are pairs~$(X;\calA)$, where~$X$ is a simplicial set and~$\calA$ is a seminormed local coefficient system on~$X$;
		\item a morphism $(X;\calA) \to (Y; \calB)$ is a pair~$(f, \theta)$, where $f \colon X \to Y$
is a map of simplicial sets and $\theta \colon f^*\calB \Rightarrow \calA$ is a natural transformation of seminormed local coefficient systems on $X$;
\item the composition of $(f, \theta) \colon (X; \calA) \to (Y; \calB)$ and $(g, \eta) \colon (Y; \calB) \to (Z; \calC)$ is given by $g \circ f \colon X \to Z$ and 
$$(g \circ f)^* \calC = f^*(g^*\calC) \xRightarrow{f^*(\eta)} f^*\calB \xRightarrow{\theta} \calA,$$
where $f^*(\eta)$ denotes the pullback of the natural transformation $\eta$ along (precomposition with) $f$.
	\end{itemize}
\end{defi}

\begin{rem} 
Seminormed local coefficient systems are essentially the same as locally constant functors $\Delta\shortdownarrow X\to \Mod^\sn_R$. Indeed, there is a functor 
	\[
		e\colon \Delta \shortdownarrow X\to \pi(X)
	\]
that is given on objects by $e(\tau \colon \Delta^n \to X)=\tau(n)$. Here $n$ denotes that $0$-simplex of~$\Delta^n$ given by the map $[0] \to [n]$ that is defined by $n \in [n]$. On morphisms, the functor~$e$ sends $u \colon (\sigma \colon \Delta^n \to X) \to (\tau \colon \Delta^m \to X)$ to the morphism $\tau(u(n)) = \sigma(n) \to \tau(m)$ in~$\pi(X)$ defined by $\tau$. The functor~$e$ descends to an equivalence of groupoids~\cite[Theorem~III.1.1]{goerss2009simplicial} $$\overline{e}\colon (\Delta \shortdownarrow X)^\gp\xrightarrow{\sim} \pi(X).$$ Thus, we may view seminormed local coefficient systems (up to equivalence) as functors $\Delta \shortdownarrow X \to \Mod^{\sn}_R$ that send every morphism to an isomorphism.  
\end{rem}

\subsection{Bounded cohomology of simplicial sets with local coefficients}
\label{sec:bc}

	Let~$X$ be a simplicial set and let $\calA$ be a seminormed local coefficient system of $R$-modules on $X$.
    For every $n$-simplex $\sigma \in X_n$, we denote by $\sigma_{[0,k]} \in X_k$ the front $k$-face of~$\sigma$, that is, the restriction of $\sigma$ along the map $\iota_{k \leq n} \colon [k] \to [n], j\mapsto j$. 
    Of course $\sigma_{[0,k]}$ is simply the image of~$\sigma$ under the composition of face maps $\partial_{k+1}^{k+1} \circ \cdots \circ \partial_n^n \colon X_n \to X_k$.
 
	The \emph{bounded cochain complex~$C^\bullet_b(X;\calA)$ of~$X$ with local coefficients in~$\calA$} consists of the seminormed $R$-modules
	\[
		C_b^n(X; \calA) \coloneqq \bigl\{\phi \colon X_n\to \prod_{x\in X_0} \calA(x) \bigm\vert \phi(\sigma)\in \calA(\sigma_0), \ \ \|\phi\|_{\infty} < +\infty\bigr\},
	\]
	where $\|\phi\|_\infty\coloneqq \sup_{\sigma\in X_n}\|\phi(\sigma)\|_{\calA(\sigma_0)}$. 
	The boundary operator
$\delta^n \colon C^n_b(X; \calA) \to C^{n+1}_b(X; \calA)$ is given by the standard formula
\[
	\delta^n(\phi)(\sigma) \coloneqq \calA([\sigma_{[0, 1]}]^{-1})(\phi(\partial_0\sigma)) + \sum_{i = 1}^{n+1} (-1)^i \phi(\partial_i \sigma).
\]
It is easy to verify that $\|\delta^n(\phi)\|_\infty<+\infty$ if $\|\phi\|_\infty<+\infty$ and that $\delta^{n+1}\circ \delta^n=0$. Moreover, the bounded cochain complex is functorial with respect to morphisms in the category $\mathcal{L}^*$ taking values in seminormed $R$-cochain complexes, that is, the category of cochain complexes of seminormed $R$-modules with bounded boundary operators, and degreewise bounded cochain maps as morphisms. 

\begin{defi}[Bounded cohomology with local coefficients] Let $X$ be a simplicial set and let $\calA$ be a seminormed local coefficient system on $X$. The \emph{bounded cohomology of~$X$ with local coefficients in~$\calA$} is defined by
	\[
		H_b^*(X; \mathcal{A}) \coloneqq H^*(C_b^\bullet(X; \mathcal{A}), \delta^\bullet).
	\]
\end{defi}

These cohomology groups inherit a seminorm from the bounded cochain complex $C_b^{\bullet}(X; \mathcal{A})$. 
Bounded cohomology defines a functor into the category of graded seminormed $R$-modules
\[
	H^*_b\colon (\calL^*)^\op\to \mathrm{gr}\Mod^\sn_R.
\]
	
\begin{example}[Simplicial cohomology]
\label{ex:ordinary}
	Let~$\calB$ be a local coefficient system on~$X$ in the classical sense (without seminorms). Let $\calB_{\tr}$ denote the seminormed local coefficient system $\calB$ equipped with the trivial seminorm. Then every cochain is bounded and hence, $C^\bullet_b(X; \calB_{\tr})$ agrees with the usual simplicial cochain complex $C^\bullet(X; \calB)$ of the simplicial set $X$ with local coefficients in $\calB$. Hence bounded cohomology with \emph{trivially seminormed} coefficients recovers ordinary cohomology as a special case.
\end{example}	
	
	Let~$(X;\calA)$ be an object in~$\calL^*$. We denote by $\calA_{\un}$ the underlying local coefficient system of $R$-modules (after forgetting the seminorm) and by~$\calA_{\tr}$ the corresponding seminormed local coefficient system equipped with the trivial seminorm. There is an obvious morphism $(X;\calA_{\tr})\to (X;\calA)$ in~$\calL^*$ and the induced map of bounded cochain complexes 
	\[
		C^\bullet_b(X;\calA)\to C^\bullet_b(X;\calA_{\tr})=C^\bullet(X;\calA_{\un})
	\]
	is the inclusion of bounded cochains into all (not necessarily bounded) cochains. This is a natural transformation of functors on the category $\mathcal{L}^*$. Hence, this induces a natural transformation from bounded cohomology to ordinary cohomology, as functors on $\mathcal{L}^*$, 
	\[
		\comp^* \colon H^*_b(X;\calA)\to H^*(X;\calA_{\un})
	\]
 known as the \emph{comparison map}.

 \begin{rem}\label{rem:bc:spaces:and:bc:groups}
    The bounded cohomology of simplicial sets extends the bounded cohomology of topological spaces and of (discrete) groups~\cite{Frigerio:book, Ivanov17, ivanov2020simplicial}. 
    Given a topological space~$E$ and a seminormed local coefficient system~$\calA$ on~$E$, the bounded cohomology $H_b^*(\Sing(E); \calA)$ agrees by definition with the bounded cohomology $H_b^*(E; \calA)$ of the topological space~$E$ (see also Remark \ref{twisted-vs-local} below).
    Given a group~$\Gamma$ (viewed as a small category with one object) and a seminormed $R\Gamma$-module~$A$,
    let~$\calA$ be the seminormed local coefficient system on the nerve~$N(\Gamma)$ with $\calA(e)=A$ for the unique $0$-simplex~$e$ of~$N(\Gamma)$ and maps induced by the $\Gamma$-action on~$A$.
    Then the bounded cohomology $H_b^*(N(\Gamma);\calA)$ agrees with the bounded cohomology $H^*_b(\Gamma;A)$ of the group~$\Gamma$.
\end{rem}

\begin{rem}[Bounded cohomology of topological spaces with twisted coefficients] \label{twisted-vs-local}
For topological spaces, the relationship between bounded cohomology with local coefficients and bounded cohomology with twisted coefficients is as in ordinary cohomology~\cite[Theorem~VI.3.4]{whiteheadbook}. 
Let~$E$ be a path-connected topological space with basepoint~$e_0$ that admits a universal covering~$\widetilde{E}$.
	The fundamental group~$\pi_1(E,e_0)$ acts on the $R$-modules~$R[\Sing_\bullet\widetilde{E}]$ of singular chains. 
    Let~$A$ be a seminormed $R[\pi_1(E,e_0)]$-module.
    
	The \emph{bounded cohomology~$H^{\bullet}_b(E;A)$ of~$X$ with twisted coefficients in~$A$} is defined to be the cohomology of the cochain complex
	\[
		C^\bullet_b(E;A)\coloneqq \bigl\{\varphi\in \Hom_{\pi_1(E,e_0)}(R[\Sing_\bullet \widetilde{E}], A) \bigm\vert \|\varphi\|_\infty<+\infty \bigr\}
	\]
equipped with the usual differential.
The category of seminormed $R[\pi_1(E,e_0)]$-modules is equivalent to the category of seminormed local coefficient systems on~$E$.
Indeed, given a seminormed local coefficient system~$\calA$ on~$E$, the fundamental group~$\pi_1(E,e_0)$ acts on the seminormed $R$-module~$\calA(e_0)$ and this canonically determines $\calA$ up to canonical isomorphism~\cite[Example~1.3.2]{fauser2019parametrised}. The universal covering map $\widetilde{E}\to E$ induces an isometric isomorphism of cochain complexes
	\[
		 C^\bullet_b(\Sing(E);\calA)\xrightarrow{\cong} C^\bullet_b(E;\calA(e_0))
	\]
	and hence an isometric isomorphism of cohomology groups $H^*_b(\Sing(E);\calA)\cong H^*_b(E;\calA(e_0))$. 
\end{rem}

Similarly to ordinary cohomology, the zeroth bounded cohomology group is given by $\pi_1$-invariants:

\begin{prop}[Bounded cohomology in degree zero]
\label{prop:degree zero}
	Let~$X$ be a connected simplicial set with basepoint~$x_0$ and let~$\calA$ be a seminormed local coefficient system on~$X$.
	Then there is an isomorphism of seminormed $R$-modules
	\[
		H^0_b(X;\calA)\cong \calA(x_0)^{\pi_1(X,x_0)},
	\]
	where~$\calA(x_0)^{\pi_1(X,x_0)}$ denotes the $\pi_1(X, x_0)$-fixed points of the~$R[\pi_1(X,x_0)]$-module $\calA(x_0)$.
\end{prop}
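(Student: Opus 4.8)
The plan is to read off $H^0_b(X;\calA)$ directly from the definition, identify $\ker\delta^0$ with the $R$-module of \emph{flat sections} of $\calA$, and then show that evaluation at the basepoint identifies this module isometrically with $\calA(x_0)^{\pi_1(X,x_0)}$.

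First I would unwind the low-degree part of the bounded cochain complex. Since $C^{-1}_b(X;\calA)=0$, we have $H^0_b(X;\calA)=\ker\bigl(\delta^0\colon C^0_b(X;\calA)\to C^1_b(X;\calA)\bigr)$, with the seminorm inherited from $C^0_b(X;\calA)$. A bounded $0$-cochain is precisely a family $\phi=(\phi(x))_{x\in X_0}$ with $\phi(x)\in\calA(x)$ for each $x$ and $\sup_{x\in X_0}\|\phi(x)\|_{\calA(x)}<+\infty$. For a $1$-simplex $\sigma\in X_1$ one has $\sigma_{[0,1]}=\sigma$ and $\sigma_0=\partial_1\sigma$, so the defining formula for the differential reads $\delta^0(\phi)(\sigma)=\calA([\sigma]^{-1})(\phi(\partial_0\sigma))-\phi(\partial_1\sigma)$, where $[\sigma]\colon\partial_1\sigma\to\partial_0\sigma$ denotes the morphism of $\pi(X)$ represented by $\sigma$. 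Hence $\phi\in\ker\delta^0$ if and only if $\calA([\sigma])(\phi(\partial_1\sigma))=\phi(\partial_0\sigma)$ for every $\sigma\in X_1$.

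Next I would promote this condition from generators to all of $\pi(X)$: the claim is that $\phi\in\ker\delta^0$ if and only if $\calA(g)(\phi(x))=\phi(y)$ for every morphism $g\colon x\to y$ in $\pi(X)$ (call such a $\phi$ a flat section). The nontrivial direction follows by writing an arbitrary morphism of $\pi(X)$ as a finite composite of classes $[\sigma]$ of $1$-simplices and their inverses — possible because $\pi(X)$ is generated as a groupoid by $X_1$ — and applying the relation above together with its inverted form; no care about the $2$-simplex relations is needed here, since the asserted identity $\calA(g)(\phi(x))=\phi(y)$ is intrinsic to $g$. Since $X$ is connected, every $x\in X_0$ is joined to $x_0$ by a morphism of $\pi(X)$, and since $\calA$ sends every morphism to an isometric isomorphism, a flat section satisfies $\|\phi(x)\|_{\calA(x)}=\|\phi(x_0)\|_{\calA(x_0)}$ for all $x$; in particular the boundedness condition is automatic and $\|\phi\|_\infty=\|\phi(x_0)\|_{\calA(x_0)}$. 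Thus $H^0_b(X;\calA)$ equals, isometrically, the $R$-module of flat sections of $\calA$.

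Finally I would verify that $\Phi\colon\phi\mapsto\phi(x_0)$ is an isometric isomorphism from the module of flat sections onto $\calA(x_0)^{\pi_1(X,x_0)}$, the latter carrying the seminorm restricted from $\calA(x_0)$. It is $R$-linear, and isometric by the identity $\|\phi\|_\infty=\|\phi(x_0)\|_{\calA(x_0)}$; its values lie in the fixed points because flatness applied to automorphisms of $x_0$ is precisely the $\pi_1(X,x_0)$-invariance condition; it is injective since $\phi(x_0)=0$ forces $\phi(x)=\calA(g)(\phi(x_0))=0$ for any chosen $g\colon x_0\to x$; and it is surjective because, given $a\in\calA(x_0)^{\pi_1(X,x_0)}$, choosing for each $x$ a morphism $g_x\colon x_0\to x$ (the identity for $x=x_0$) and setting $\phi(x)\coloneqq\calA(g_x)(a)$ yields a well-defined flat section — independence of the choice of $g_x$ and flatness both reduce to $a$ being $\pi_1(X,x_0)$-invariant — with $\Phi(\phi)=a$. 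The only step that deserves attention is the passage from the equations indexed by $1$-simplices to the characterization in terms of all morphisms of $\pi(X)$, but as noted this is harmless; everything else is bookkeeping with the conventions for $\sigma_0$, $\sigma_{[0,1]}$, and the direction of $[\sigma]$.
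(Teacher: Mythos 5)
Your proof is correct and follows essentially the same route as the paper's sketch: you identify $H^0_b$ with the module of flat sections (which is the paper's natural transformations $R\Rightarrow\calA$), note that the groupoid being generated by $1$-simplices and $\calA$ being a functor on $\pi(X)$ allows the cocycle condition to propagate to all morphisms, and then use connectivity and the fact that $\calA$ sends morphisms to isometries to reduce to evaluation at $x_0$. The additional details you supply — the automatic boundedness, the well-definedness of the inverse map via the fixed-point hypothesis, and the isometry of the identification — are exactly the bookkeeping the paper leaves implicit.
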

\begin{proof}(sketch) A $0$-cochain is a bounded collection $(\phi_x)_{x \in X_0} \in \prod_{x \in X_0} \calA(x)$. This is a $0$-cocycle precisely when for every $1$-simplex $\sigma \in X_1$, we have $\calA([\sigma]^{-1})(\phi_{\partial_0 \sigma}) = \phi_{\partial_1 \sigma}$. Let $R \colon \pi(X) \to \Mod^{\sn}_R$ be the constant functor at the $R$-module $R$. Then, a $0$-cocycle is given exactly by a natural transformation (``point'') $R \Rightarrow \calA$. By connectivity (of $\pi(X)$), this is determined by its restriction to $\pi_1(X, x_0)$, thus yielding a $\pi_1(X, x_0)$-fixed point of $\calA(x_0)$. 
\end{proof}

As a consequence, the zeroth bounded cohomology group~$H_b^0$ is invariant under weak equivalences of simplicial sets (not necessarily Kan complexes) -- we postpone the precise definitions and a detailed discussion of homotopy invariance to Section~\ref{sec:weak invariance bc}.

\subsection{Bounded cohomology and bounded products}\label{subsec:bounded products}
The interaction between bounded cohomology and (bounded) products will play an important role in later sections. 
The situation is different from ordinary cohomology (which simply sends arbitrary coproducts of simplicial sets to products) and, in some sense, the comparison between bounded cohomology and ordinary cohomology can be reduced to their different properties with respect to infinite coproducts. 

\begin{defi}[Bounded product of seminormed $R$-modules]
Let~$(C_i)_{i\in I}$ be a collection of seminormed $R$-modules.
The \emph{bounded product of~$(C_i)_{i\in I}$} is the $R$-submodule~$\bprod{i\in I} C_i$ of~$\prod_{i\in I} C_i$ defined by
	\[
		\bprod{i\in I} C_i\coloneqq \bigl\{ (c_i)_{i} \in \prod_{i\in I} C_i \bigm\vert \sup_{i\in I} \|c_i\|_{C_i}<+\infty \bigr\}
	\]
equipped with supremum seminorm. 
\end{defi}

\begin{defi}[Bounded product of a uniform collection]
Let~$((C^\bullet_i,\delta^\bullet_i))_{i\in I}$ be a collection of seminormed $R$-cochain complexes.
We say that~$((C^\bullet_i,\delta^\bullet_i))_{i\in I}$ is \emph{uniform} if for every~$n\in \Z$ the supremum~$\sup_{i\in I}\|\delta^n_i\|$ is finite.

The \emph{bounded product} of a uniform collection~$((C^\bullet_i,\delta^\bullet_i))_{i\in I}$ of seminormed $R$-cochain complexes is the seminormed $R$-cochain complex $\bprod{i\in I}(C^\bullet_i, \delta^{\bullet}_i)$ that is given degreewise by the bounded products of cochain modules and the products of differentials.
\end{defi}

\begin{example}
Every collection of bounded cochain complexes of simplicial sets is uniform.
\end{example}

Given a uniform collection~$((C^{\bullet}_i, \delta^{\bullet}_i))_{i \in I}$ of seminormed $R$-cochain complexes, there is a canonical map of seminormed $R$-modules for every $n \in \Z$:
\begin{equation} \label{comparison-map-Phi}
\Phi^n \colon H^n\Big(\bprod{i\in I} C^\bullet_i\Big) \to \bprod{i\in I}H^n(C^\bullet_i).
\end{equation}
The map $\Phi^n$ is surjective and of seminorm $\leq 1$, but it is not injective in general. Indeed, the kernel clearly contains cohomology classes represented by elements of the form $(\delta^{n-1}_i(c_i))_{i \in I}$ but these are not in general coboundaries in the bounded product of the cochain complexes. 
Thus, unlike ordinary cohomology, bounded cohomology does not preserve (bounded) products in general.

\begin{defi}[Bounded boundary condition for a uniform collection]
\label{defn:Phi iso}
Let $((C^{\bullet}_i, \delta^{\bullet}_i))_{i \in I}$ be a uniform collection of seminormed $R$-cochain complexes and let $n \in \Z$. We say that 
$((C^\bullet_i,\delta^\bullet_i))_{i\in I}$ satisfies the \emph{bounded boundary condition in degree~$n$} ($\BBC^n$) if the map $\Phi^n$ \eqref{comparison-map-Phi} is bijective. 
\end{defi}

\begin{rem}
If the map $\Phi^n$ \eqref{comparison-map-Phi} is bijective, then two (bounded) $n$-cocycles $(\phi_i)_{i \in I}$ and $(\psi_i)_{i \in I}$ in the bounded product represent the same $n$-cohomology class if and only if the $n$-cocycles $\phi_i$ and $\psi_i$ represent the same $n$-cohomology class for each $i \in I$. Moreover, then~$\Phi^n$ is actually an isometric isomorphism between seminormed $R$-modules. 
\end{rem}

\begin{example}
\label{ex:finite collections}
Every \emph{finite} collection of seminormed cochain complexes satisfies the bounded boundary condition in all degrees.
\end{example}

The bounded boundary condition is closely related to the \emph{uniform uniform boundary condition}~\cite[Appendix~A]{liloehmoraschini}, which is a many-object version of the usual uniform boundary condition from the work of Matsumoto--Morita~\cite{Matsu-Mor}.

\begin{defi}[Uniform boundary condition for a cochain complex, \cite{Matsu-Mor}]
\label{defn:UBC}
Let~$(C^\bullet,\delta^\bullet)$ be a seminormed $R$-cochain complex, let~$n\in \Z$, and let~$\kappa\in \R_{\ge 0}$.
We say that $(C^\bullet,\delta^\bullet)$ satisfies the \emph{uniform boundary condition in degree~$n$ with constant~$\kappa$} ($\kappa$-$\UBC^n$) if for every
$b\in \im(\delta^{n-1}\colon C^{n-1} \to C^n)$
there exists~$c\in C^{n-1}$ with~$\delta^{n-1}(c)=b$ and
	\[
		\|c\|\le \kappa\cdot \|b\|.
	\]
We say that~$(C^\bullet,\delta^\bullet)$ satisfies the \emph{uniform boundary condition in degree~$n$} ($\UBC^n$) if it satisfies $\kappa$-$\UBC^n$ for some~$\kappa\in \R_{\ge 0}$.
\end{defi}

\begin{prop}
\label{thm:matsumoto morita}
Let $(C^\bullet,\delta^\bullet)$ be a seminormed $R$-cochain complex and let~$n \in \Z$.
Then $(C^\bullet,\delta^\bullet)$ satisfies~$\UBC^n$ if and only if the induced bijective bounded linear map of seminormed $R$-modules 
$\overline{\delta}^{n-1} \colon C^{n-1}/\ker(\delta^{n-1}) \rightarrow \im(\delta^{n-1})$
has a bounded $R$-linear inverse. 
\end{prop}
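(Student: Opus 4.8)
The plan is to reduce the statement to the canonical factorization of $\delta^{n-1}$. I equip $C^{n-1}/\ker(\delta^{n-1})$ with the quotient seminorm
\[
\|[c]\|\coloneqq\inf\bigl\{\|c'\| : c'\in C^{n-1},\ \delta^{n-1}(c')=\delta^{n-1}(c)\bigr\}
\]
and $\im(\delta^{n-1})$ with the seminorm inherited from $C^n$; with these choices $\overline{\delta}^{n-1}$ is the bijective bounded $R$-linear map of the statement, and its (a priori merely $R$-linear) inverse $T\coloneqq(\overline{\delta}^{n-1})^{-1}\colon\im(\delta^{n-1})\to C^{n-1}/\ker(\delta^{n-1})$ is the map whose boundedness is in question. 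The single identity on which the proof rests is that, directly from the definition of the quotient seminorm,
\[
\|T(b)\|=\inf\bigl\{\|c\| : c\in C^{n-1},\ \delta^{n-1}(c)=b\bigr\}\qquad\text{for all }b\in\im(\delta^{n-1}),
\]
so that boundedness of $T$ is precisely a uniform statement about the existence of preimages of controlled norm.

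For the implication ``$\UBC^n\Rightarrow T$ bounded'', suppose $\kappa$-$\UBC^n$ holds. Given $b\in\im(\delta^{n-1})$, choose $c$ with $\delta^{n-1}(c)=b$ and $\|c\|\le\kappa\|b\|$; then $T(b)=[c]$ and $\|T(b)\|\le\|c\|\le\kappa\|b\|$, so $T$ is bounded with the same constant $\kappa$. For the converse, suppose $T$ is bounded with constant $\kappa$. Given $b\in\im(\delta^{n-1})$, the displayed identity yields $\inf\{\|c\| : \delta^{n-1}(c)=b\}=\|T(b)\|\le\kappa\|b\|$, so for every $\varepsilon>0$ there is a preimage $c$ of $b$ with $\|c\|\le\kappa\|b\|+\varepsilon$; taking $\varepsilon\coloneqq\|b\|$ gives a preimage with $\|c\|\le(\kappa+1)\|b\|$, which establishes $(\kappa+1)$-$\UBC^n$ for every $b$ with $\|b\|>0$.

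The one step I expect to require genuine care is the locus of classes $b\in\im(\delta^{n-1})$ with $\|b\|=0$: the uniform boundary condition then demands an honest zero-seminorm preimage of $b$, whereas boundedness of $T$ only supplies preimages of arbitrarily small norm. This is automatic whenever $C^n$ carries a genuine norm -- then $\|b\|=0$ forces $b=0$, for which $c=0$ is a preimage -- which in particular covers the classical normed framework of Matsumoto--Morita \cite{Matsu-Mor}; handling it in full seminormed generality is the only point at which the argument goes beyond the classical one, and it is where I would concentrate the remaining work.
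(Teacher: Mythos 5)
Your approach via the canonical factorization of $\delta^{n-1}$ is the natural one, and the identity $\|T(b)\|=\inf\{\|c\|:\delta^{n-1}(c)=b\}$ correctly recasts boundedness of the inverse. The forward implication ($\UBC^n$ implies $T$ bounded) is complete and tight, with the same constant $\kappa$; your converse is also correct for $\|b\|>0$, at the cost of one in the constant. You are right to flag the locus $\|b\|=0$ as the genuinely delicate point --- and in fact, in the full seminormed generality the ``if'' direction of the proposition is \emph{false}, so the gap you identified cannot be closed by further argument. Take $C^{n-1}=\ell^1(\N)$ with the $\ell^1$-norm, $C^n=\R$ with the \emph{trivial} seminorm, and $\delta^{n-1}=\varphi$ a discontinuous linear functional on $\ell^1$ (which exists by a Hamel-basis argument); all other terms zero. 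Since the target seminorm is trivial, $\varphi$ is bounded, and since $\varphi$ is discontinuous its kernel is dense, so the quotient seminorm on $\ell^1/\ker\varphi\cong\R$ is trivial too. Thus $\overline{\delta}^{n-1}$ is an isomorphism between trivially seminormed modules and has a bounded inverse. Yet $\UBC^n$ fails: for $b=1$ we have $\|b\|=0$, so a witness would need $\|c\|_1\le \kappa\cdot 0=0$, i.e.\ $c=0$, contradicting $\varphi(0)=0\ne 1$.

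The paper states this proposition without proof, so there is nothing to compare against directly; its only use is Corollary~\ref{cor:matsumoto-morita}, which is restricted to complexes of Banach $\R$-modules. There $C^n$ is genuinely normed, $\|b\|=0$ forces $b=0$, and your argument closes completely, so your reading --- that the statement is effectively a normed one --- is the right one. To obtain a correct seminormed statement, one should either replace $\UBC^n$ by the strictly weaker condition $\inf\{\|c\|:\delta^{n-1}(c)=b\}\le\kappa\|b\|$ for all $b\in\im(\delta^{n-1})$, which is verbatim the boundedness of $T$, or add the hypothesis that every $b\in\im(\delta^{n-1})$ of zero seminorm admits a preimage of zero seminorm (which holds automatically when $C^n$ is normed).
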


By the open mapping theorem, the uniform boundary condition for cochain complexes of Banach $\R$-modules can be characterized in terms of cohomology. 

\begin{cor}[cf.~{\cite[Theorem~2.3]{Matsu-Mor}}] \label{cor:matsumoto-morita}
Let~$(C^\bullet,\delta^\bullet)$ be a seminormed $\R$-cochain complex consisting of Banach modules and let~$n \in \Z$. Then $(C^\bullet,\delta^\bullet)$ satisfies~$\UBC^n$ if and only if the seminormed $\R$-module~$H^n(C^\bullet,\delta^\bullet)$ is Banach. 

In particular, if~$H^n(C^\bullet,\delta^\bullet)=0$, then~$(C^\bullet,\delta^\bullet)$ satisfies~$\UBC^n$.
\end{cor}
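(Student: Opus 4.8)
The plan is to deduce this from Proposition~\ref{thm:matsumoto morita} together with the open mapping theorem. Write $Z^n \coloneqq \ker(\delta^n) \subseteq C^n$ and $B^n \coloneqq \im(\delta^{n-1}) \subseteq C^n$, so that $H^n(C^\bullet,\delta^\bullet) = Z^n / B^n$ carries the quotient seminorm. Since $\delta^{n-1}$ and $\delta^n$ are bounded (hence continuous) $\R$-linear maps between Banach $\R$-modules, the submodules $\ker(\delta^{n-1}) \subseteq C^{n-1}$ and $Z^n \subseteq C^n$ are closed; consequently $C^{n-1}/\ker(\delta^{n-1})$ is a Banach $\R$-module (with the quotient norm) and $Z^n$ is a Banach $\R$-module. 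The one observation we need about $H^n$ is that it is Banach, i.e.\ its quotient seminorm is a complete norm, precisely when $B^n$ is closed in $Z^n$, equivalently closed in $C^n$ (since $Z^n$ is already closed in $C^n$): if $B^n$ fails to be closed, the quotient seminorm vanishes on the nonzero classes coming from $\overline{B^n} \setminus B^n$ and so is not a norm.

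With this in hand, both implications are immediate. Suppose first that $H^n(C^\bullet,\delta^\bullet)$ is Banach; then $B^n$ is closed in $C^n$ and hence is itself a Banach $\R$-module. By Proposition~\ref{thm:matsumoto morita}, the induced map $\overline{\delta}^{n-1} \colon C^{n-1}/\ker(\delta^{n-1}) \to B^n$ is a bounded bijective $\R$-linear map of Banach $\R$-modules, so the open mapping theorem provides a bounded $\R$-linear inverse; thus $(C^\bullet,\delta^\bullet)$ satisfies $\UBC^n$ by Proposition~\ref{thm:matsumoto morita}. Conversely, if $(C^\bullet,\delta^\bullet)$ satisfies $\UBC^n$, then by Proposition~\ref{thm:matsumoto morita} the bounded bijection $\overline{\delta}^{n-1} \colon C^{n-1}/\ker(\delta^{n-1}) \to B^n$ has a bounded inverse, i.e.\ it is a topological isomorphism of seminormed $\R$-modules; since its source is Banach, so is $B^n$, and in particular $B^n$ is complete, hence closed in $C^n$. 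Therefore $H^n(C^\bullet,\delta^\bullet) = Z^n/B^n$ is Banach.

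The ``in particular'' clause then follows at once: if $H^n(C^\bullet,\delta^\bullet) = 0$ it is trivially a Banach $\R$-module, so $\UBC^n$ holds by the equivalence just proved. I do not expect any real obstacle here; the argument is a routine invocation of the open mapping theorem, and the only point requiring a little care is recording that ``$H^n$ is Banach'' is equivalent to the closedness of $B^n = \im(\delta^{n-1})$ in $C^n$, which is what lets us apply the open mapping theorem to an honest bijection between Banach spaces.
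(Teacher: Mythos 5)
Your proof is correct and follows exactly the route the paper intends (the paper leaves the argument implicit, merely noting before the corollary that it follows from Proposition~\ref{thm:matsumoto morita} and the open mapping theorem). The one substantive point you needed to supply — that $H^n$ being Banach is equivalent to $\im(\delta^{n-1})$ being closed in the Banach space $C^n$, so that the open mapping theorem applies to an honest bijection of Banach spaces and, conversely, a bounded inverse forces completeness and hence closedness of the image — is handled correctly.
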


The following is a useful criterion for the bounded boundary condition of a uniform collection of seminormed cochain complexes 
in terms of the uniform boundary condition of its members.

\begin{prop}[{\cite[Theorem~A.15]{liloehmoraschini}}]
\label{prop:bounded products}
Let~$((C^\bullet_i,\delta^\bullet_i))_{i\in I}$ be a uniform collection of seminormed $R$-cochain complexes, let $n \in \Z$, and let $\kappa\in \R_{\ge 0}$.
If~$(C^\bullet_i,\delta^\bullet_i)$ satisfies $\kappa$-$\UBC^n$ for every $i\in I$, then the collection $((C^{\bullet}_i, \delta^{\bullet}_i))_{i \in I}$ satisfies~$\BBC^n$. 
\end{prop}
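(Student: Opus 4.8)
The plan is to establish the bounded boundary condition $\BBC^n$ by showing that the comparison map $\Phi^n$ of \eqref{comparison-map-Phi} is bijective. Recall first that $\bprod{i\in I}(C^\bullet_i,\delta^\bullet_i)$ is a well-defined seminormed $R$-cochain complex precisely because the collection is uniform (so that $(\delta^{n-1}_i)_{i\in I}$ restricts to a bounded operator on the bounded product), and that $\Phi^n$ is always surjective of seminorm $\le 1$: given a family $([\psi_i])_{i\in I}\in\bprod{i\in I}H^n(C^\bullet_i)$, choose for each $i$ a cocycle representative $\phi_i\in C^n_i$ with $\|\phi_i\|\le\|[\psi_i]\|+1$; then $(\phi_i)_{i\in I}$ is a cocycle in $\bprod{i\in I}C^\bullet_i$ mapping to $([\psi_i])_{i\in I}$. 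Hence the whole content of the proposition is the \emph{injectivity} of $\Phi^n$.

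To analyse $\ker\Phi^n$ I would unravel the definitions. A class in $H^n(\bprod{i\in I}C^\bullet_i)$ is represented by a cocycle $(\phi_i)_{i\in I}\in\bprod{i\in I}C^n_i$, i.e.\ each $\phi_i$ is a cocycle in $C^\bullet_i$ and $M\coloneqq\sup_{i\in I}\|\phi_i\|_{C^n_i}<+\infty$. Its image $([\phi_i])_{i\in I}$ vanishes in $\bprod{i\in I}H^n(C^\bullet_i)$ exactly when $[\phi_i]=0$ in $H^n(C^\bullet_i)$ for every $i$, i.e.\ when $\phi_i\in\im(\delta^{n-1}_i)$ for every $i\in I$. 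So injectivity of $\Phi^n$ is equivalent to the assertion that every such $(\phi_i)_{i\in I}$ is a coboundary in the bounded product, i.e.\ lies in the image of $\delta^{n-1}\colon\bprod{i\in I}C^{n-1}_i\to\bprod{i\in I}C^n_i$.

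Here the hypothesis enters: since each $C^\bullet_i$ satisfies $\kappa$-$\UBC^n$ \emph{with one and the same constant} $\kappa$, and $\phi_i\in\im(\delta^{n-1}_i)$, I can pick for each $i$ a primitive $c_i\in C^{n-1}_i$ with $\delta^{n-1}_i(c_i)=\phi_i$ and $\|c_i\|_{C^{n-1}_i}\le\kappa\,\|\phi_i\|_{C^n_i}\le\kappa M$. Then $\sup_{i\in I}\|c_i\|_{C^{n-1}_i}\le\kappa M<+\infty$, so $(c_i)_{i\in I}$ is a bona fide element of $\bprod{i\in I}C^{n-1}_i$ with $\delta^{n-1}\bigl((c_i)_{i\in I}\bigr)=(\phi_i)_{i\in I}$. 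Thus the class of $(\phi_i)_{i\in I}$ in $H^n(\bprod{i\in I}C^\bullet_i)$ is zero, $\Phi^n$ is injective, hence bijective, which is $\BBC^n$. The same seminorm estimate, together with surjectivity and $\|\Phi^n\|\le 1$, moreover upgrades $\Phi^n$ to an isometric isomorphism, recovering the remark after Definition~\ref{defn:Phi iso}.

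There is no substantial obstacle here; the one point to flag is that the \emph{uniformity} of the $\UBC^n$-constant over $i\in I$ is indispensable. If the $\UBC^n$-constants depended on $i$ and were unbounded, the primitives $c_i$ need not have uniformly bounded seminorm, so they would fail to assemble into an element of $\bprod{i\in I}C^{n-1}_i$ — which is exactly the phenomenon that makes $\Phi^n$ non-injective in general.
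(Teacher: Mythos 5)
Your proof is correct and is the natural argument. The paper itself does not prove this statement but cites \cite[Theorem~A.15]{liloehmoraschini}; your argument --- using the uniform constant~$\kappa$ to bound the chosen primitives $(c_i)_{i\in I}$ by $\kappa M$ so that they assemble into an element of the bounded product --- is exactly the expected one, and your re-derivation of the surjectivity of $\Phi^n$ (already asserted after~\eqref{comparison-map-Phi}) is also correct.
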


For a collection of normed $\R$-cochain complexes, we prove the converse of Proposition~\ref{prop:bounded products} for all but finitely many members. 

\begin{prop}
\label{prop:BBC iff UUBC}
	Let~$((C^\bullet_i,\delta^\bullet_i))_{i\in I}$ be a uniform collection of normed $\R$-cochain complexes and let $n \in \Z$.
	The following are equivalent:
	\begin{enumerate}[label=\enum]
		\item the collection $((C^{\bullet}_i, \delta^{\bullet}_i))_{i \in I}$ satisfies~$\BBC^n$;
		\item there exists a finite subset~$J\subset I$ and $\kappa\in \R_{\ge 0}$ such that $(C^\bullet_i,\delta^\bullet_i)$ satisfies $\kappa$-$\UBC^n$ for every $i\in I\setminus J$.
	\end{enumerate} 
	\begin{proof}
		First note that $\Phi^n$ is injective if and only if for every bounded element of the form $(\delta^{n-1}_i(c_i))_{i \in I} \in \bprod{i\in I}C_i^n$, there is a bounded element $(x_i)_{i \in I} \in \bprod{i \in I} C_i^{n-1}$ such that $\delta^{n-1}_i(x_i) = \delta^{n-1}_i(c_i)$ for all~$i\in I$.
		Then it is clear that~(ii) implies~(i).
		
		Conversely, suppose that (ii) does not hold.
		We construct inductively an infinite sequence of elements $b_{i_k}\in C_{i_k}^{n}$, $k\ge 1$, with~$i_k\neq i_l$ for $k \neq l$. 
		Suppose that~$b_{i_1},\ldots,b_{i_{k-1}}$ have already been constructed, $k \geq 1$.
		For~$J=\{i_1,\ldots,i_{k-1}\}$ and~$\kappa=k$, there exists~$i_k\in I\setminus \{i_1,\ldots,i_{k-1}\}$ and an element~$b_{i_k}\in \im(\delta_{i_k}^{n-1}\colon C_{i_k}^{n-1}\to C_{i_k}^n)$ with $\|b_{i_k}\|=1$ such that for every~$c\in C_{i_k}^{n-1}$ with $\delta_{i_k}^{n-1}(c)=b_{i_k}$, we have $\|c\|>k$. 
        Here we use that~$C_{i_k}^n$ is a normed $\R$-module.
        Expanding this sequence of elements to an element of $\bprod{i\in I}C_i^n$ by inserting the zero elements where needed, we obtain a non-trivial cohomology class of $\bprod{i\in I}C_i^{\bullet}$ which is in the kernel of $\Phi^n$. 
	\end{proof}
\end{prop}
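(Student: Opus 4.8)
The plan is to reduce $\BBC^n$ to the injectivity of the comparison map $\Phi^n$ of~\eqref{comparison-map-Phi}, which is always surjective, and then to reformulate that injectivity concretely. Since the kernel of $\Phi^n$ consists precisely of the classes of bounded cocycles $(\phi_i)_{i\in I}\in\bprod{i\in I}C_i^n$ all of whose components are coboundaries in the respective $C_i^\bullet$, and since such a class vanishes exactly when $(\phi_i)_{i\in I}$ is itself a coboundary in the bounded product, one sees that $\Phi^n$ is injective if and only if every bounded family of coboundaries $(\delta^{n-1}_i(c_i))_{i\in I}$ admits a \emph{uniformly bounded} family of primitives, i.e.\ some $(x_i)_{i\in I}\in\bprod{i\in I}C_i^{n-1}$ with $\delta^{n-1}_i(x_i)=\delta^{n-1}_i(c_i)$ for all $i$. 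This reformulation is what I would record first.

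Granting it, the implication (ii)$\Rightarrow$(i) is essentially immediate: if $J$ is the finite exceptional set and $\kappa$ the common constant, then for $i\in I\setminus J$ I would use $\kappa$-$\UBC^n$ to pick a primitive of norm $\le\kappa\cdot\sup_{i}\|\delta^{n-1}_i(c_i)\|$, and for the finitely many $i\in J$ simply take $x_i=c_i$; the resulting family is bounded. (Alternatively this follows by combining Proposition~\ref{prop:bounded products}, applied to the subcollection indexed by $I\setminus J$, with Example~\ref{ex:finite collections} for the finite remainder.) For the converse (i)$\Rightarrow$(ii) I would argue by contraposition: assuming (ii) fails, I would build by induction pairwise distinct indices $i_1,i_2,\dots\in I$ together with coboundaries $b_{i_k}\in\im(\delta^{n-1}_{i_k})$ with $\|b_{i_k}\|=1$ such that every primitive $c$ of $b_{i_k}$ has $\|c\|>k$. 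At stage $k$, feeding $J=\{i_1,\dots,i_{k-1}\}$ and $\kappa=k$ into the negation of (ii) yields an index $i_k\notin J$ for which $k$-$\UBC^n$ fails, hence a coboundary $b\in\im(\delta^{n-1}_{i_k})$ with $\|c\|>k\|b\|$ for every $c$ with $\delta^{n-1}_{i_k}(c)=b$; rescaling $b$ to norm $1$ gives $b_{i_k}$. Assembling these into $(b_i)_{i\in I}$---with $b_i\coloneqq b_{i_k}$ when $i=i_k$ and $b_i\coloneqq 0$ otherwise---produces an element of $\bprod{i\in I}C_i^n$ of supremum norm $1$ whose components are all coboundaries, hence a cocycle of the bounded product (the differential being componentwise) whose class lies in $\ker\Phi^n$. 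That class is nonzero, since a bounded family of primitives $(x_i)_{i\in I}$ would force $\|x_{i_k}\|>k$ for all $k$; so $\Phi^n$ is not injective and $\BBC^n$ fails.

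The single delicate point---and the reason the statement is phrased over genuine \emph{normed} $\R$-modules rather than seminormed $R$-modules---is the rescaling step in the induction. The failure of $k$-$\UBC^n$ only produces a coboundary whose primitives are large relative to its own norm, and over a mere seminormed module that coboundary could itself have seminorm $0$, in which case it would be invisible to the supremum seminorm on $\bprod{i\in I}C_i^n$ and the diagonalisation would collapse; the normedness of $C^n_{i_k}$ is exactly what rules this out and lets us normalise. Everything else---that the assembled family is a bounded cocycle, that $\Phi^n$ is computed componentwise, that the chosen indices are automatically distinct---is routine bookkeeping, so I expect no further obstacle.
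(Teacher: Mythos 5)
Your proof is correct and follows essentially the same route as the paper: reduce $\BBC^n$ to injectivity of $\Phi^n$ reformulated via uniformly bounded primitives, get (ii)$\Rightarrow$(i) directly (the paper just calls it clear), and prove (i)$\Rightarrow$(ii) by contraposition via the same diagonalisation with rescaled coboundaries $b_{i_k}$, with the same observation that normedness is needed to rescale. Your remark pinpointing exactly why seminormed modules would break the rescaling step is a slightly fuller explanation than the paper's terse ``Here we use that $C_{i_k}^n$ is a normed $\R$-module,'' but the underlying argument is identical.
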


\begin{example}
Let~$(C^\bullet,\delta^\bullet)$ be a normed $\R$-cochain complex and consider an infinite collection $((C^{\bullet}_i, \delta^{\bullet}_i))_{i\in I}$ of normed $\R$-cochain complexes that consists of copies of $(C^\bullet,\delta^\bullet)$. Then, by Proposition~\ref{prop:BBC iff UUBC}, 
$((C^{\bullet}_i, \delta^{\bullet}_i))_{i\in I}$ satifies $\BBC^n$ if and only if $(C^\bullet,\delta^\bullet)$ satisfies $\UBC^n$.

Let $X$ be a simplicial set whose bounded $\R$-cochain complex (with constant coefficients in $\R$) does not satisfy $\UBC^n$ for some $n \in \Z$. For example, the bounded $\R$-cochain complex of the singular set of the wedge of two circles does not satisfy $\UBC^3$ \cite{Soma}. Then an infinite collection consisting of copies of the bounded cochain complex of $X$ does not satisfy $\BBC^n$. 
\end{example}

\section{Homotopy invariance of bounded cohomology}
\label{sec:weak invariance bc}

Ivanov~\cite{Ivanov17} proved that the bounded cohomology of topological spaces (with constant coefficients in $\R$) is invariant under weak homotopy equivalences (see also \cite{ivanov2020simplicial}). In this section, we investigate the homotopy invariance properties of bounded cohomology with seminormed local coefficients in the more general context of simplicial sets.

\subsection{Homotopy equivalences in \texorpdfstring{$\calL^*$}{L*}}

Let $(X; \calA)$ be an object in $\calL^*$. The simplicial cylinder object 
is given by
$$X \sqcup X = X \times \partial \Delta^1 \stackrel{i}{\hookrightarrow} X \times \Delta^1 \xrightarrow{p} X.$$
The simplicial set $X \times \Delta^1$ is equipped with the seminormed local coefficient system $\calA \times \Delta^1 \coloneqq p^*(\calA)$. We obtain an associated cylinder object in $\calL^*$
$$(X \sqcup X; \calA \sqcup \calA) \stackrel{i}{\hookrightarrow} (X \times \Delta^1; \calA \times \Delta^1) \xrightarrow{p} (X; \calA).$$
Let~$f=(f,\theta),f'=(f',\theta')\colon (X;\calA)\to (Y;\calB)$ be maps in~$\calL^*$.
A \emph{homotopy from~$f$ to~$f'$}
is a map in~$\calL^*$
$$H = (H, \Theta) \colon (X \times \Delta^1; \calA \times \Delta^1) \to (Y; \calB)$$ 
such that $f = H \circ i_0$ and $f' = H \circ i_1$, where the inclusions $i_0, i_1$ are defined to be the identity on the local coefficients. At the level of local coefficients, the simplicial map~$H$ induces a natural isomorphism $h \colon f^*\calB \Rightarrow f'^* \calB$ which corresponds precisely to $H^*\calB$. Then the natural transformation $\Theta \colon H^*\calB \Rightarrow \calA \times \Delta^1$ corresponds precisely to the compatibility condition $\theta = \theta' \circ h$.

As in the underlying theory of simplicial sets, it is not possible in general to compose two homotopies in~$\calL^*$.
We say that $f$ and $f'$ are \emph{homotopic in $\calL^*$}, written $f \simeq_{\calL^*} f'$, if there is a zigzag of homotopies in~$\calL^*$
connecting $f$ and $f'$. A map $f=(f, \theta) \colon (X; \calA) \to (Y; \calB)$ is a \emph{homotopy equivalence} 
if there exists a map $g = (g, \eta) \colon (Y; \calB) \to (X; \calA)$ such that $f\circ g \simeq_{\calL^*} \id_{(Y; \calB)}$ and $g \circ f \simeq_{\calL^*} \id_{(X; \calA)}$.

The situation simplifies when the target $Y$ is a Kan complex:

\begin{prop} \label{homotopy-simplification}
Let $Y$ be a Kan complex and suppose that the maps $$(f, \theta), (f', \theta') \colon (X; \calA) \to (Y; \calB)$$ are homotopic in $\calL^*$. Then there is a homotopy in $\calL^*$ 
from $(f, \theta)$ to $(f', \theta')$.
\begin{proof}
    There is a zigzag of homotopies connecting $f$ to $f'$ and a compatible natural isomorphism $f^*\calB \Rightarrow f'^*\calB$ (by inverting the isomorphisms in the zigzag of natural isomorphisms). Then it is possible to find a homotopy from $f$ to $f'$~\cite[Corollary~I.6.2]{goerss2009simplicial} that induces the same natural isomorphism on fundamental groupoids as the zigzag of homotopies does.
\end{proof}
\end{prop}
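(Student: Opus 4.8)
The plan is to show that, when $Y$ is a Kan complex, the relation on $\Hom_{\calL^*}((X;\calA),(Y;\calB))$ defined by ``$a$ is related to $b$ whenever there exists a homotopy in $\calL^*$ from $a$ to $b$'' is already an equivalence relation; since $\simeq_{\calL^*}$ is by definition the equivalence relation generated by this one, the two then coincide, which is exactly the assertion of the proposition. Reflexivity is witnessed by the constant homotopy $X\times\Delta^1\xrightarrow{p}X\xrightarrow{f}Y$ together with $p^*\theta$, so the real content is symmetry and transitivity.

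To set things up, I would recall from the discussion preceding the proposition that a homotopy in $\calL^*$ from $(f,\theta)$ to $(f',\theta')$ amounts to a simplicial homotopy $H\colon X\times\Delta^1\to Y$ from $f$ to $f'$ together with the single equation $\theta=\theta'\circ h_H$, where $h_H\colon f^*\calB\Rightarrow f'^*\calB$ is obtained by applying $\calB$ to the natural isomorphism $\pi(f)\Rightarrow\pi(f')$ that $H$ induces on fundamental groupoids. The two purely simplicial facts I would invoke, both special to Kan complexes, are: (a) every simplicial homotopy $H\colon f\Rightarrow f'$ admits a reverse $\bar H\colon f'\Rightarrow f$ with $h_{\bar H}=h_H^{-1}$; and (b) simplicial homotopies $H\colon f\Rightarrow f'$ and $H'\colon f'\Rightarrow f''$ admit a concatenation $H''\colon f\Rightarrow f''$ with $h_{H''}=h_{H'}\circ h_H$. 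Granting (a) and (b), symmetry follows since $\theta=\theta'\circ h_H$ yields $\theta'=\theta\circ h_H^{-1}=\theta\circ h_{\bar H}$, and transitivity follows since $\theta=\theta'\circ h_H$ and $\theta'=\theta''\circ h_{H'}$ yield $\theta=\theta''\circ h_{H'}\circ h_H=\theta''\circ h_{H''}$, using that applying $\calB$ carries composites of natural isomorphisms to composites. Equivalently, working straight from the hypothesis: reverse each backward step of the given zigzag by (a), concatenate everything by (b), and observe that the intermediate transformations telescope, $\theta=\theta_0=\theta_1\circ h_0=\cdots=\theta'\circ(h_{k-1}\circ\cdots\circ h_0)$.

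Facts (a) and (b) are the classical statements that homotopy of maps into a Kan complex is a symmetric and transitive relation \cite[Corollary~I.6.2]{goerss2009simplicial}; the only point needing care is that the standard proofs realize the \emph{inverse}, respectively the \emph{composite}, of the induced natural isomorphisms, not merely \emph{some} natural isomorphisms. For (b) one builds a map $X\times\Lambda^2_1\to Y$ that is $H$ on the $\{0,1\}$-face and $H'$ on the $\{1,2\}$-face, extends it along the anodyne inclusion $X\times\Lambda^2_1\hookrightarrow X\times\Delta^2$ ($X$ times an anodyne map), and restricts to the $\{0,2\}$-face to obtain $H''$; for each vertex $x\in X_0$ the $2$-simplex $\{x\}\times\Delta^2$ of $Y$ then exhibits $[H''(x)]=[H'(x)]\circ[H(x)]$ in $\pi(Y)$, which is precisely $h_{H''}=h_{H'}\circ h_H$ after applying $\calB$. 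Statement (a) is the analogous argument with $X\times\Lambda^2_0\hookrightarrow X\times\Delta^2$ and the constant homotopy at $f$ on the $\{0,2\}$-face, producing a $2$-simplex that witnesses $[\bar H(x)]\circ[H(x)]=\id$. I expect this bookkeeping on fundamental groupoids to be the only genuinely delicate step; the remainder is formal manipulation with natural transformations of seminormed local coefficient systems.
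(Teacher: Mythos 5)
Your proof is correct and takes essentially the same approach as the paper: both rest on \cite[Corollary~I.6.2]{goerss2009simplicial} together with the observation that reversal and concatenation of homotopies into a Kan complex can be arranged to invert, respectively compose, the induced natural isomorphisms on fundamental groupoids. You simply spell out, via horn-filling and the resulting $2$-simplices in $Y$, the verification of your facts~(a) and~(b) that the paper compresses into the phrase ``it is possible to find a homotopy $\ldots$ that induces the same natural isomorphism.''
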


In addition, we have the following characterization of homotopy equivalences in~$\calL^*$.

\begin{prop}\label{prop:characterisation:simpl:homotopy:eq}
If $(f, \theta) \colon (X; \calA) \to (Y; \calB)$ is a homotopy equivalence in $\calL^*$, then~$f$ is a homotopy equivalence of simplicial sets and $\theta$ is a natural isomorphism. 

Conversely, let $(f, \theta) \colon (X; \calA) \to (Y; \calB)$ be a map in $\calL^*$, where $X$ and $Y$ are Kan complexes. If $f \colon X \to Y$ is a homotopy equivalence of simplicial sets and $\theta \colon f^*\calB \Rightarrow \calA$ is a natural isomorphism, then $(f, \theta) \colon (X; \calA) \to (Y; \calB)$ is a homotopy
equivalence in $\calL^*$. 
\end{prop}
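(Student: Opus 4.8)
The plan is to treat the two implications separately, extracting the underlying simplicial statement by applying the forgetful functor $\calL^* \to \SSet$ and analyzing the natural transformations on fundamental groupoids.

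For the first implication, suppose $(f,\theta)$ is a homotopy equivalence in $\calL^*$ with homotopy inverse $(g,\eta)$. Applying the forgetful functor $(X;\calA)\mapsto X$, which sends homotopies in $\calL^*$ to homotopies of simplicial sets (by construction of the cylinder object), shows immediately that $f$ is a homotopy equivalence of simplicial sets with inverse $g$. To see that $\theta$ is a natural isomorphism, I would chase the compatibility data. Each homotopy in $\calL^*$ records a natural isomorphism between the pulled-back coefficient systems (as noted before Proposition~\ref{homotopy-simplification}, a homotopy $H=(H,\Theta)$ from $(f,\theta)$ to $(f',\theta')$ gives a natural isomorphism $h\colon f^*\calB\Rightarrow f'^*\calB$ with $\theta=\theta'\circ h$). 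Composing along the zigzags witnessing $f\circ g\simeq_{\calL^*}\id$ and $g\circ f\simeq_{\calL^*}\id$, the coefficient data of these zigzags (together with the naturality witnessed by $\theta$ and $\eta$) forces $\theta\circ g^*(\eta)$ and $\eta\circ f^*(\theta)$ to be isomorphisms (being composites of the natural isomorphisms coming from the homotopies, up to the canonical identification $(g\circ f)^*\calB\cong f^*g^*\calB$). Hence $\theta$ has a two-sided inverse up to these isomorphisms and is therefore itself a natural isomorphism.

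For the converse, assume $X$ and $Y$ are Kan complexes, $f$ is a simplicial homotopy equivalence with inverse $g\colon Y\to X$, and $\theta\colon f^*\calB\Rightarrow\calA$ is a natural isomorphism. I would first upgrade $g$ to a morphism in $\calL^*$: set $\eta \coloneqq g^*(\theta^{-1})\colon g^*\calA \Rightarrow g^*f^*\calB = (f\circ g)^*\calB$, and then transport along the natural isomorphism $(f\circ g)^*\calB \Rightarrow \calB$ induced by a chosen homotopy $f\circ g\simeq \id_Y$ (such a homotopy exists and induces the relevant natural isomorphism on $\pi(-)$ since $Y$ is a Kan complex; this uses Proposition~\ref{homotopy-simplification} and \cite[Corollary~I.6.2]{goerss2009simplicial}). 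This produces $(g,\eta)\colon(Y;\calB)\to(X;\calA)$. It remains to check that $(f,\theta)\circ(g,\eta)\simeq_{\calL^*}\id_{(Y;\calB)}$ and $(g,\eta)\circ(f,\theta)\simeq_{\calL^*}\id_{(X;\calA)}$. The underlying simplicial homotopies exist by hypothesis; the point is to choose the coefficient data on the cylinder so as to match. Since the homotopies $f\circ g\simeq\id_Y$ and $g\circ f\simeq\id_X$ induce natural isomorphisms on fundamental groupoids, and since all the relevant coefficient morphisms ($\theta$, $\eta$, their pullbacks, the unit/counit isomorphisms) are isomorphisms, the compatibility condition $\theta=\theta'\circ h$ required of a homotopy in $\calL^*$ can be arranged by possibly adjusting the homotopy within its homotopy class using Proposition~\ref{homotopy-simplification} — at worst one passes to a zigzag, which is still a homotopy in $\calL^*$ in the sense defined.

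The main obstacle, and the step deserving the most care, is the bookkeeping of the coefficient-level natural isomorphisms in the converse: one must verify that the natural isomorphism on $\pi(Y)$ (resp.\ $\pi(X)$) induced by the chosen simplicial homotopy is compatible with the composite of $\theta$, $\eta$, and the canonical identification $(f\circ g)^*\calB\cong f^*g^*\calB$, so that $(f,\theta)\circ(g,\eta)$ and $\id_{(Y;\calB)}$ are genuinely connected by a homotopy (or zigzag) in $\calL^*$. This is where Proposition~\ref{homotopy-simplification} is essential — it lets us replace an a priori zigzag of simplicial homotopies with compatible coefficient data by a single honest homotopy realizing the prescribed natural isomorphism on fundamental groupoids, which is exactly what the definition of a homotopy in $\calL^*$ demands. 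The argument for the first implication is comparatively routine once one recognizes that the coefficient part of a homotopy equivalence in $\calL^*$ is, up to canonical isomorphisms, a two-sided inverse to $\theta$.
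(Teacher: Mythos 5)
Your forward direction is essentially the paper's argument, modulo a notational slip: after unwinding the definition of composition in $\calL^*$, the coefficient data of $(g,\eta)\circ(f,\theta)$ is $\theta\circ f^*(\eta)$ and that of $(f,\theta)\circ(g,\eta)$ is $\eta\circ g^*(\theta)$ (you wrote $\theta\circ g^*(\eta)$ and $\eta\circ f^*(\theta)$). The conclusion that $\theta$ is an isomorphism then needs one more explicit ingredient, which the paper spells out: $\pi(f)$ and $\pi(g)$ are equivalences of groupoids, so $f^*$ and $g^*$ are equivalences of functor categories and hence reflect isomorphisms; this is what lets you pass from ``$\theta\circ f^*(\eta)$ and $\eta\circ g^*(\theta)$ are isos'' to ``$\theta$ is an iso'' (a two-out-of-six--type argument). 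Your phrase ``two-sided inverse up to these isomorphisms'' gestures at this but does not quite deliver it.

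The converse has a genuine gap. You correctly set up $\eta = h'\circ g^*(\theta)^{-1}$, where $h'$ is the natural isomorphism induced by a chosen homotopy $H'\colon f\circ g\simeq \id_Y$; this makes $H'$ promote to a homotopy in $\calL^*$ from $(f,\theta)\circ(g,\eta)$ to $\id_{(Y;\calB)}$ essentially by construction. But then you must also produce a homotopy in $\calL^*$ from $(g,\eta)\circ(f,\theta)$ to $\id_{(X;\calA)}$, i.e.\ find a homotopy $H\colon g\circ f\simeq\id_X$ whose induced natural isomorphism $h$ on $\pi(X)$ equals the \emph{already determined} composite $\theta\circ f^*(\eta)\colon (g\circ f)^*\calA\Rightarrow \calA$. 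This is not automatic: different homotopy classes of homotopies $g\circ f\simeq\id_X$ induce different natural isomorphisms, and there is no reason an arbitrary $H$ will give the one you need. Invoking Proposition~\ref{homotopy-simplification} here is misplaced---that proposition collapses an existing zigzag of homotopies \emph{in $\calL^*$} to a single one, but the point at issue is whether such a zigzag exists in the first place; nor does ``adjusting $H$ within its homotopy class'' help, since all homotopies in a fixed class rel endpoints induce the \emph{same} natural isomorphism. The paper closes exactly this gap by choosing $H$ and $H'$ to satisfy the triangle identities (the unit and counit of an adjoint equivalence of $\infty$-groupoids), which forces the induced data on $\pi(X)$ and $\pi(Y)$ to be an adjoint equivalence, and then the required identity $h = \theta\circ f^*(\eta)$ follows formally from the triangle identities for $(g^*, f^*)$. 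Your proof needs either this adjointness argument or an explicit substitute (for instance a ``homotopy category'' style argument showing that the left-inverse forced by $H$ and the right-inverse forced by $H'$ coincide up to $\calL^*$-homotopy); as written, the crucial step is asserted rather than proved.
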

\begin{proof}
Suppose that $(f, \theta)$ is a homotopy equivalence in $\calL^*$. Clearly, $f$ is a homotopy equivalence and $\pi(f) \colon \pi(X) \to \pi(Y)$ is an equivalence of groupoids. It remains to prove that $\theta \colon f^* \calB \Rightarrow \calA$ is a natural isomorphism. Let $(g, \eta) \colon (Y; \calB) \to (X; \calA)$ in $\calL^*$ be a homotopy inverse, so there are (zigzags of) homotopies $g \circ f \simeq_{\calL^*} \id_{(X; \calA)}$ and $f \circ g \simeq_{\calL^*} \id_{(Y; \calB)}$. At the level of local coefficients, these yield (zigzags of) natural isomorphisms 
$(g \circ f)^* \calA \cong \calA$ and $(f \circ g)^*\calB \cong \calB$. Moreover, these natural isomorphisms that arise from the homotopies agree with $\theta \circ f^*(\eta)$ and $\eta \circ g^*(\theta)$, respectively. Then note that $\eta \colon g^*\calA \Rightarrow \calB$ (resp.\ $\theta \colon f^*\calB \Rightarrow \calA$) is a natural isomorphism if and only if $f^*(\eta) \colon f^*g^*\calA \Rightarrow f^*\calB$ (resp.\ $g^*(\theta)$) is a natural isomorphism and the result follows.

Conversely, suppose that $f \colon X \to Y$ is a homotopy equivalence of Kan complexes. Then there exist a homotopy inverse $g \colon Y \to X$ and homotopies $H \colon \id_X  \simeq g \circ f$ and $H' \colon f \circ g \simeq \id_Y$~\cite[Corollary~I.6.2]{goerss2009simplicial}. 
Moreover, since $X$ and $Y$ are Kan complexes (= $\infty$-groupoids), we may assume that these homotopies are \emph{adjoint} in the following sense: the respective composite homotopies 
$$f \stackrel{f H}{\simeq} f \circ g \circ f \stackrel{H'f}{\simeq} f$$
and
$$g \stackrel{Hg}{\simeq} g \circ f \circ g \stackrel{gH'}{\simeq} g$$
are homotopic respectively to the trivial homotopy (\emph{triangle identities}). This is done by choosing $H$ and $H'$ to be the unit and counit transformations of the adjoint equivalence $f \colon X \rightleftarrows Y \colon g$ between $\infty$-groupoids. As a consequence, $\pi(f) \colon \pi(X) \leftrightarrows \pi(Y) \colon \pi(g)$ is also an adjoint equivalence (with respect to the unit and counit transformations that are induced by~$H$ and~$H'$). These specifications are necessary in order to extend the homotopy equivalence~$f$ to a homotopy equivalence in $\calL^*$. 
We may extend $g \colon Y \to X$ to a map $(g, \eta) \colon (Y; \calB) \to (X; \calA)$ in $\calL^*$ by setting $\eta \colon g^*\calA \Rightarrow \calB$ to be the natural isomorphism such that
$$g^*f^* \calB \xRightarrow{g^*(\theta)} g^*\calA \xRightarrow{\eta} \calB$$ is the one 
induced by $\pi(f) \circ \pi(g) \cong \id_{\pi(Y)}$. In other words, considering the induced adjoint pair $(g^*, f^*)$, the natural transformation~$\eta$ is the adjoint of the inverse of $\theta$. Then the composite
$$f^*g^*\calA \xRightarrow{f^*(\eta)} f^*\calB \xRightarrow{\theta} \calA$$ is also induced by $\pi(g) \circ \pi(f) \cong \id_{\pi(X)}$. This uses the triangle identities for the adjoint pair $(g^*, f^*)$ in order to identify $f^*(\eta)$ with the inverse of the composite 
$$f^*\calB \Rightarrow f^*g^*f^*\calB \xRightarrow{f^*g^*(\theta)} f^*g^*\calA$$
where the first natural transformation is induced by the counit transformation of $(g^*, f^*)$, and the latter agrees by naturality with 
$$f^* \calB \stackrel{\theta}{\Rightarrow} \calA \Rightarrow f^*g^*\calA.$$
Finally, $H$ (and similarly $H'$) extends to a homotopy in $\calL^*$ by setting $$\Theta \colon H^* \calA \Rightarrow \calA \times \Delta^1$$ to correspond with the natural isomorphism $(g \circ f)^*\calA \Rightarrow \calA$ given by $\pi(g) \circ \pi(f) \cong \id_{\pi(X)}$. This completes the proof that $(f, \theta) \colon (X; \calA) \to (Y; \calB)$ is a homotopy equivalence in $\calL^*$. 
\end{proof}

The proof in fact shows that the converse statement of Proposition \ref{prop:characterisation:simpl:homotopy:eq} holds more generally for homotopy equivalences of simplicial sets such that the corresponding homotopies to the identity maps can be chosen to be adjoint. It is well known that a map $f \colon X \to Y$ of Kan complexes is a \emph{weak equivalence} (that is, $|f|$ is a weak homotopy equivalence of topological spaces) if and only if $f$ is a homotopy equivalence \cite[Theorem~I.1.10]{goerss2009simplicial}. We conclude: 

\begin{cor}\label{cor:simpl:homotop:equi:iff:weakequiv:and:iso}
Let $X$ and $Y$ be Kan complexes. A map $(f, \theta) \colon (X; \calA) \to (Y; \calB)$ is a homotopy
 equivalence in $\calL^*$ if and only if $f$ is a weak equivalence and $\theta$ is a natural isomorphism.
\end{cor}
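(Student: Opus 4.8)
The plan is to combine Proposition~\ref{prop:characterisation:simpl:homotopy:eq} with the classical fact recalled just above the statement, namely that a map of Kan complexes is a weak equivalence if and only if it is a homotopy equivalence of simplicial sets \cite[Theorem~I.1.10]{goerss2009simplicial}. Once this dictionary is in place, both implications are purely formal and no separate argument is needed.

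For the forward direction, I would assume that $(f,\theta)$ is a homotopy equivalence in $\calL^*$ and apply the first part of Proposition~\ref{prop:characterisation:simpl:homotopy:eq}: it gives that $f$ is a homotopy equivalence of simplicial sets and that $\theta$ is a natural isomorphism. Since $X$ and $Y$ are Kan complexes, the cited classical fact lets me upgrade ``homotopy equivalence of simplicial sets'' to ``weak equivalence'', which is exactly the desired conclusion. For the converse, I would start from the assumption that $f$ is a weak equivalence and $\theta$ is a natural isomorphism; using again that $X$ and $Y$ are Kan complexes, $f$ is a homotopy equivalence of simplicial sets, so all the hypotheses of the converse half of Proposition~\ref{prop:characterisation:simpl:homotopy:eq} are met, and it yields that $(f,\theta)$ is a homotopy equivalence in $\calL^*$.

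I do not expect any genuine obstacle here: all of the substantive work — in particular the delicate choice of homotopies to the identity that are adjoint on the nose, so that the induced transformations of fundamental groupoids form an adjoint equivalence and $\theta$ can be transported to a homotopy inverse in $\calL^*$ — has already been carried out in the proof of Proposition~\ref{prop:characterisation:simpl:homotopy:eq}. The only point worth flagging is simply that this corollary relies on the Kan condition twice: once to identify weak equivalences with homotopy equivalences of simplicial sets, and once (implicitly, via Proposition~\ref{prop:characterisation:simpl:homotopy:eq}) to ensure that the relevant homotopies can be chosen adjoint.
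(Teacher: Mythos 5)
Your proof is correct and matches the paper's own (implicit) argument exactly: the corollary is obtained by combining the two directions of Proposition~\ref{prop:characterisation:simpl:homotopy:eq} with the classical equivalence between weak equivalences and homotopy equivalences of Kan complexes. Nothing further is needed.
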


\subsection{Homotopy invariance of bounded cohomology} The bounded cohomology of topological spaces is invariant under weak homotopy equivalences \cite{Ivanov17}. In this subsection, we show a refinement of 
this property for bounded cohomology as a functor on the category $\calL^*$.

\begin{lemma}\label{lemma:simpl:hom:maps:induce:bounded:chain:}
    Let~$(f,\theta),(f',\theta')\colon (X;\calA)\to (Y;\calB)$ be maps in~$\calL^*$.
    If there is a homotopy~$(G,\Theta)$ from~$(f,\theta)$ to~$(f',\theta')$, then the cochain maps $C^\bullet_b(f, \theta)$ and $C^\bullet_b(f', \theta')$ are cochain homotopic via a cochain homotopy~$H$ whose norm in degree~$n$ is bounded by $n$ for every~$n\ge 0$.
\end{lemma}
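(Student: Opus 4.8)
The plan is to adapt the classical prism operator to bounded cochains with seminormed local coefficients, paying attention to the coefficient bookkeeping. Recall the \emph{prism decomposition} of $\Delta^{n-1}\times\Delta^1$ into its $n$ non-degenerate top-dimensional simplices: for $i\in\{0,\dots,n-1\}$ let $P_i\colon[n]\to[n-1]\times[1]$ be the order-preserving map with $P_i(j)=(j,0)$ for $j\le i$ and $P_i(j)=(j-1,1)$ for $j>i$. Given a simplex $\sigma\colon\Delta^{n-1}\to X$, composing $\sigma\times\id_{\Delta^1}$ with the map $\Delta^n\to\Delta^{n-1}\times\Delta^1$ corresponding to $P_i$ produces a simplex $P_i\sigma\in(X\times\Delta^1)_n$ and hence $G(P_i\sigma)\in Y_n$. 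The elementary but crucial observation is that $P_i(0)=(0,0)$ for every~$i$, so the simplices $P_i\sigma$ all share the initial vertex $(\sigma_0,0)$; consequently $\phi(G(P_i\sigma))$ lies in $\calB(G(\sigma_0,0))$ for all~$i$ and all $\phi\in C^n_b(Y;\calB)$, and a \emph{single} coefficient correction will suffice.

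First I would define $H=H^n\colon C^n_b(Y;\calB)\to C^{n-1}_b(X;\calA)$ by
\[
    H(\phi)(\sigma)\coloneqq \Theta_{(\sigma_0,0)}\Big(\textstyle\sum_{i=0}^{n-1}(-1)^i\,\phi\bigl(G(P_i\sigma)\bigr)\Big), \qquad \sigma\in X_{n-1},
\]
where $\Theta_{(\sigma_0,0)}\colon (G^*\calB)(\sigma_0,0)=\calB(G(\sigma_0,0))\to(p^*\calA)(\sigma_0,0)=\calA(\sigma_0)$ is the component at $(\sigma_0,0)$ of the natural transformation $\Theta\colon G^*\calB\Rightarrow\calA\times\Delta^1=p^*\calA$. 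Since $\Theta_{(\sigma_0,0)}$ is a morphism in $\Mod^\sn_R$, hence seminorm non-increasing, and the inner sum has $n$ summands each of seminorm~$\le\|\phi\|_\infty$, we get $\|H(\phi)(\sigma)\|\le n\cdot\|\phi\|_\infty$ for every $\sigma\in X_{n-1}$; in particular $H(\phi)$ is a bounded cochain and $\|H^n\|\le n$. (For $n=0$ the target module is zero and there is nothing to check.)

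Then I would verify the cochain homotopy identity $\delta^{n-1}\circ H^n+H^{n+1}\circ\delta^n=C^n_b(f',\theta')-C^n_b(f,\theta)$. Evaluated on $\sigma\in X_n$ this is the classical identity $\partial P+P\partial=i_1-i_0$ for the prism decomposition, now carried out at the cochain level. Among the faces of the $(n+1)$-simplices $P_i\sigma$ appearing in $H^{n+1}(\delta^n\phi)(\sigma)$: the top face $\partial_0(P_0\sigma)=i_1\sigma$ and the bottom face $\partial_{n+1}(P_n\sigma)=i_0\sigma$ yield the terms $C^n_b(f',\theta')(\phi)(\sigma)$ and $-C^n_b(f,\theta)(\phi)(\sigma)$; the inner faces satisfy $\partial_{i+1}(P_i\sigma)=\partial_{i+1}(P_{i+1}\sigma)$ and cancel in pairs because of opposite signs; and the remaining faces are prism simplices of faces of~$\sigma$ which recombine, with matching signs, into $-\delta^{n-1}(H^n\phi)(\sigma)$.

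The main obstacle -- and really the only non-formal point -- is the coefficient bookkeeping, since the differential carries the twist $\calA([\sigma_{[0,1]}]^{-1})$ along front edges whereas $H$ carries only the single outer correction $\Theta_{(\sigma_0,0)}$. This is resolved using naturality of $\Theta$ along the relevant edges of $X\times\Delta^1$ together with the identities $\theta=i_0^*\Theta$ and $\theta'=i_1^*\Theta$, which hold because $(f,\theta)=(G,\Theta)\circ i_0$ and $(f',\theta')=(G,\Theta)\circ i_1$. For a vertical edge $v\colon(\sigma_0,0)\to(\sigma_0,1)$ the projection $p$ sends $v$ to an identity, so naturality gives $\Theta_{(\sigma_0,0)}\circ\calB(G(v))^{-1}=\Theta_{(\sigma_0,1)}=\theta'_{\sigma_0}$; combined with $G(i_1\sigma)=f'(\sigma)$ this turns the top-face contribution into $C^n_b(f',\theta')(\phi)(\sigma)$, and the bottom-face contribution becomes $C^n_b(f,\theta)(\phi)(\sigma)$ since $\theta_{\sigma_0}=\Theta_{(\sigma_0,0)}$ and $G(i_0\sigma)=f(\sigma)$. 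For the front edge of~$\sigma$ at level~$0$, which $p$ sends to $\sigma_{[0,1]}$, naturality of $\Theta$ intertwines the outer corrections $\Theta_{(\sigma_0,0)}$ and $\Theta_{(\sigma_1,0)}$ precisely along the twist $\calA([\sigma_{[0,1]}]^{-1})$ appearing in $\delta^{n-1}$; all other faces involve no twist, so the remaining bookkeeping is routine.
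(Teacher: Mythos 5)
Your proof takes essentially the same approach as the paper: the prism operator via the $n$ non-degenerate $n$-simplices of $\Delta^{n-1}\times\Delta^1$, the observation that each $p_j$ has initial vertex $(0,0)$, and the norm bound $\|H^n\|\le n$. Your explicit coefficient correction via $\Theta_{(\sigma_0,0)}$ together with the naturality argument for the $\partial_0$ twists spells out carefully what the paper compresses into the phrase ``by definition of~$G$, the cochain homotopy $H$ respects the chosen local coefficients.''
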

\begin{proof}
In order to define the cochain homotopy $H$ we consider the usual triangulation of the simplicial prism $\Delta^{n-1} \times \Delta^1$.
We have $n$ non-degenerate $n$-simplices $p_j \colon \Delta^{n} \to \Delta^{n-1} \times \Delta^1$ -- these are the maps of posets $[n] \to [n-1] \times [1]$ from $(0,0)$ to $(n-1, 1)$ that move exactly $n-1$ steps to the right and once up. Hence, for every $j\in \{0, \ldots, n-1\}$ and every $(n-1)$-simplex $\sigma \in X_{n-1}$, we consider the following composition:
   \[
   \Delta^{n} \xrightarrow{p_j} \Delta^{n-1} \times \Delta^1 \xrightarrow{\sigma \times \textup{id}} X \times \Delta^1 \xrightarrow{G} Y
   \]
which defines an $n$-simplex in $Y$. Then, we define our desired cochain homotopy from $C^{\bullet}_b(f, \theta)$ to $C^{\bullet}_b(f', \theta')$ by
   \begin{align*}
   H^n \colon C^n_b(Y; \calB) &\to C^{n-1}_b(X; \calA)\\
   \varphi &\mapsto (\sigma\mapsto \sum_{j = 0}^{n-1} (-1)^j \varphi(G \circ (\sigma \times \textup{id}) \circ p_j)).
   \end{align*}
By definition of~$G$, the cochain homotopy $H$ respects the chosen local coefficients. Moreover, the norm of~$H^n$ is bounded by $n$.
\end{proof}

It follows that bounded cohomology is homotopy invariant:

\begin{cor}\label{cor:homotopic:map:same:map:in:bc}
    Two homotopic maps in~$\calL^*$ induce the same morphism in bounded cohomology.
    In particular, a homotopy equivalence in $\calL^*$ induces an isometric isomorphism in bounded cohomology.
\end{cor}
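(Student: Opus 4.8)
The plan is to deduce everything from Lemma~\ref{lemma:simpl:hom:maps:induce:bounded:chain:} together with the functoriality of~$H^*_b$. The first step is the observation that a \emph{single} homotopy in~$\calL^*$ already forces equality on bounded cohomology: if $(G,\Theta)$ is a homotopy from $(f,\theta)$ to $(f',\theta')$, then by Lemma~\ref{lemma:simpl:hom:maps:induce:bounded:chain:} the cochain maps $C^\bullet_b(f,\theta)$ and $C^\bullet_b(f',\theta')$ are cochain homotopic, and cochain-homotopic cochain maps induce the same map on cohomology. Since being homotopic in~$\calL^*$ means, by definition, being connected by a zigzag of homotopies, and since ``inducing the same morphism on~$H^*_b$'' is a symmetric and transitive relation on morphisms of~$\calL^*$ into which each edge of the zigzag falls by the preceding remark, transitivity along the zigzag yields the first assertion.

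For the ``in particular'' statement, let $(f,\theta)\colon (X;\calA)\to (Y;\calB)$ be a homotopy equivalence in~$\calL^*$ with homotopy inverse $(g,\eta)\colon (Y;\calB)\to (X;\calA)$, so that $(g,\eta)\circ (f,\theta)\simeq_{\calL^*}\id_{(X;\calA)}$ and $(f,\theta)\circ (g,\eta)\simeq_{\calL^*}\id_{(Y;\calB)}$. Applying the first part and using that $H^*_b\colon (\calL^*)^\op\to \mathrm{gr}\Mod^\sn_R$ is a functor, we obtain $H^*_b(f,\theta)\circ H^*_b(g,\eta)=\id_{H^*_b(X;\calA)}$ and $H^*_b(g,\eta)\circ H^*_b(f,\theta)=\id_{H^*_b(Y;\calB)}$. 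Hence $H^*_b(f,\theta)$ is an isomorphism of seminormed $R$-modules with inverse $H^*_b(g,\eta)$.

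It remains to check that this isomorphism is isometric. Every morphism $(f,\theta)$ of~$\calL^*$ induces a degreewise seminorm non-increasing cochain map $C^\bullet_b(f,\theta)$, because its coefficient component~$\theta$ consists of seminorm non-increasing maps in $\Mod^\sn_R$; passing to the (quotient) seminorm on cohomology, $H^*_b(f,\theta)$ is therefore seminorm non-increasing as well. In particular both $H^*_b(f,\theta)$ and its inverse $H^*_b(g,\eta)$ are seminorm non-increasing, and a bijective seminorm non-increasing $R$-linear map whose inverse is also seminorm non-increasing is an isometric isomorphism. This completes the proof.

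The argument is essentially formal, and I expect no serious obstacle: the only point demanding any care is the last one, namely upgrading ``bounded isomorphism'' to ``isometric isomorphism'', which hinges precisely on the fact that morphisms of~$\calL^*$ act by contractions on bounded cochains (hence on bounded cohomology) and that a contraction with contractive inverse is an isometry. One could equally fold this observation into the previous paragraph.
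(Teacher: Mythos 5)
Your proof is correct and follows essentially the paper's intended approach: the paper presents this corollary as an immediate consequence of Lemma~\ref{lemma:simpl:hom:maps:induce:bounded:chain:}, and you supply exactly the expected chain of deductions (single homotopy gives cochain homotopy, zigzag transitivity, functoriality for the isomorphism, and norm non-increasing maps in both directions for the isometry). No gaps.
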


Corollary~\ref{cor:homotopic:map:same:map:in:bc} and Corollary~\ref{cor:simpl:homotop:equi:iff:weakequiv:and:iso} together yield:
\begin{cor}\label{cor:weak invariance bc}
Let $(f, \theta) \colon (X; \calA) \to (Y; \calB)$ be a map in $\calL^*$ such that $f$ is a weak equivalence between Kan complexes and $\theta$ is a natural isomorphism. Then $(f,\theta)$ induces an isometric isomorphism in bounded cohomology. 
\end{cor}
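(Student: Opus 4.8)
The plan is to obtain this statement as a purely formal consequence of the two preceding corollaries, so essentially no new work is required. First I would apply Corollary~\ref{cor:simpl:homotop:equi:iff:weakequiv:and:iso}: the hypotheses that $X$ and $Y$ are Kan complexes, that $f$ is a weak equivalence, and that $\theta$ is a natural isomorphism are precisely the right-hand side of the stated equivalence, so $(f,\theta)$ is a homotopy equivalence in $\calL^*$. (Recall that the nontrivial direction of that corollary already required choosing the homotopies to the identities to be \emph{adjoint}, so that the induced equivalence of fundamental groupoids is an adjoint equivalence compatible with $\theta$; this has been settled in Proposition~\ref{prop:characterisation:simpl:homotopy:eq}.)

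Then I would invoke the second assertion of Corollary~\ref{cor:homotopic:map:same:map:in:bc}, which says that a homotopy equivalence in $\calL^*$ induces an isometric isomorphism in bounded cohomology. Spelled out: a homotopy inverse $(g,\eta)$ in $\calL^*$ gives compositions $(g,\eta)\circ(f,\theta)$ and $(f,\theta)\circ(g,\eta)$ that are homotopic in $\calL^*$ to the respective identities, so by the first part of Corollary~\ref{cor:homotopic:map:same:map:in:bc} (which rests on the explicit bounded cochain homotopy of Lemma~\ref{lemma:simpl:hom:maps:induce:bounded:chain:}) the induced maps $H^*_b(g,\eta)$ and $H^*_b(f,\theta)$ are mutually inverse; since every morphism in $\calL^*$ induces a seminorm non-increasing map on bounded cochains---because the coefficient transformations $\theta$ and $\eta$ are seminorm non-increasing and precomposition with $f$ (resp.\ $g$) does not increase the supremum norm---both $H^*_b(f,\theta)$ and its inverse are seminorm non-increasing, hence the isomorphism is isometric. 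I do not expect any genuine obstacle here: all the substantive content is already contained in Lemma~\ref{lemma:simpl:hom:maps:induce:bounded:chain:} and in Corollary~\ref{cor:simpl:homotop:equi:iff:weakequiv:and:iso}, and the present corollary is simply their combination.
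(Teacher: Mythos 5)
Your proof is correct and is exactly the paper's argument: the paper states that Corollary~\ref{cor:homotopic:map:same:map:in:bc} and Corollary~\ref{cor:simpl:homotop:equi:iff:weakequiv:and:iso} together yield this corollary, which is precisely the two-step deduction you carry out. Your elaboration of why the isomorphism is isometric (both $H^*_b(f,\theta)$ and $H^*_b(g,\eta)$ are seminorm non-increasing because morphisms in $\calL^*$ induce norm non-increasing cochain maps, and they are mutually inverse) is the intended content of Corollary~\ref{cor:homotopic:map:same:map:in:bc}, so there is no gap.
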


\begin{example} \label{example:G-vs-BG}
Let $\Gamma$ be a group (viewed as a groupoid) and let $B\Gamma = |N(\Gamma)|$ be the standard model for the classifying space of $\Gamma$ -- this is an Eilenberg--MacLane space of type $K(\Gamma,1)$~\cite[Example~I.1.5 and Proposition~I.7.8]{goerss2009simplicial}. Since $N(\Gamma)$ is a Kan complex~\cite[Lemma I.3.5]{goerss2009simplicial}, there is a weak equivalence $f \colon N(\Gamma) \to \Sing(B\Gamma)$ of Kan complexes, which is given by the adjoint of the identity map for~$B\Gamma$. The equivalence $\pi(f)$ is simply given by the full inclusion of the fundamental group at the canonical basepoint. Hence, by Corollary~\ref{cor:weak invariance bc}, $f$ induces an isometric isomorphism between $H^{\bullet}_b(N(\Gamma); f^*\calA)$ (bounded cohomology of the group $\Gamma$) and $H^{\bullet}_b(B\Gamma; \calA)$ (bounded cohomology of the topological space $B\Gamma$) for any seminormed local coefficient system $\calA$ (see Remark~\ref{rem:bc:spaces:and:bc:groups}).
\end{example}

\begin{rem}
Corollary~\ref{cor:weak invariance bc} yields the corresponding weak homotopy invariance of the bounded cohomology of topological spaces (cf.~\cite[Section 2]{Raptis-bounded+htpy}).
Indeed, given a weak homotopy equivalence $f \colon E \to E'$ of topological spaces, the singular set functor sends~$f$ to a weak equivalence of Kan complexes $\Sing(f) \colon \Sing(E) \to \Sing(E')$, since the canonical map $|\Sing(E)| \to E$ is a weak homotopy equivalence for any topological space~$E$~\cite{milnor1957geometric}.
Then the weak homotopy invariance of bounded cohomology follows from Corollary~\ref{cor:weak invariance bc} and Remark~\ref{rem:bc:spaces:and:bc:groups}.
\end{rem}

Further, we will need to control $\UBC$-constants along homotopy equivalences of Kan complexes:

\begin{cor} \label{cor:control_ubc_constant}
    Let $(f, \theta) \colon (X; \calA) \to (Y; \calB)$ be a homotopy equivalence in~$\calL^*$, where~$Y$ is a Kan complex.
    Let~$n\in \N$ and let~$\kappa\in \R_{\ge 0}$.
    If~$C_b^\bullet(X;\calA)$ satisfies $\kappa$-$\UBC^n$, then~$C_b^\bullet(Y;\calB)$ satisfies $(\kappa+n)$-$\UBC^n$.
    \begin{proof}
        Let~$(g,\eta)\colon (Y;\calB)\to (X;\calA)$ be a homotopy inverse of~$(f,\theta)$.
        Since~$Y$ is a Kan complex, there exists a homotopy from~$(f,\theta)\circ (g,\eta)$ to~$\id_{(Y;\calB)}$ (Proposition~\ref{homotopy-simplification}).
        Then the claim follows from Lemma~\ref{lemma:simpl:hom:maps:induce:bounded:chain:} and standard computations for $\UBC$-constants~\cite[Proposition~A.3]{liloehmoraschini}.
    \end{proof}
\end{cor}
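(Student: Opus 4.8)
The plan is to transport the uniform boundary condition from $C^\bullet_b(X;\calA)$ to $C^\bullet_b(Y;\calB)$ by comparing the endomorphism $C^\bullet_b(g,\eta)\circ C^\bullet_b(f,\theta)$ of $C^\bullet_b(Y;\calB)$ with the identity through a cochain homotopy of controlled norm. First I would fix a homotopy inverse $(g,\eta)\colon (Y;\calB)\to (X;\calA)$ of $(f,\theta)$, so that $(f,\theta)\circ(g,\eta)\simeq_{\calL^*}\id_{(Y;\calB)}$. Since $Y$ is a Kan complex, Proposition~\ref{homotopy-simplification} replaces this zigzag by a genuine homotopy in $\calL^*$ from $(f,\theta)\circ(g,\eta)$ to $\id_{(Y;\calB)}$, and Lemma~\ref{lemma:simpl:hom:maps:induce:bounded:chain:} then yields a cochain homotopy $H$ on $C^\bullet_b(Y;\calB)$ with $\|H^k\|\le k$ in every degree $k$ and, up to sign, $C^\bullet_b(g,\eta)\circ C^\bullet_b(f,\theta)-\id=\delta^{k-1}H^k+H^{k+1}\delta^k$. (Here I use that, by contravariance, the cochain map induced by the composite $(f,\theta)\circ(g,\eta)$ is $C^\bullet_b(g,\eta)\circ C^\bullet_b(f,\theta)$.)

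Next, I would fix $b\in\im(\delta^{n-1}\colon C^{n-1}_b(Y;\calB)\to C^n_b(Y;\calB))$; in particular $\delta^n(b)=0$. I would first push $b$ to $X$: since $C^\bullet_b(f,\theta)$ is a cochain map, $C^n_b(f,\theta)(b)$ again lies in $\im(\delta^{n-1})$, and since morphisms in $\calL^*$ induce seminorm non-increasing cochain maps, $\|C^n_b(f,\theta)(b)\|\le\|b\|$. Applying $\kappa$-$\UBC^n$ for $C^\bullet_b(X;\calA)$ gives some $a\in C^{n-1}_b(X;\calA)$ with $\delta^{n-1}(a)=C^n_b(f,\theta)(b)$ and $\|a\|\le\kappa\|b\|$. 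Pushing back via the (also seminorm non-increasing) cochain map $C^\bullet_b(g,\eta)$ yields $\delta^{n-1}\bigl(C^{n-1}_b(g,\eta)(a)\bigr)=\bigl(C^n_b(g,\eta)\circ C^n_b(f,\theta)\bigr)(b)$ with $\|C^{n-1}_b(g,\eta)(a)\|\le\kappa\|b\|$. Finally, evaluating the cochain homotopy identity at $b$ and using $\delta^n(b)=0$ gives $b-\bigl(C^n_b(g,\eta)\circ C^n_b(f,\theta)\bigr)(b)=\pm\,\delta^{n-1}(H^n(b))$, so that $b=\delta^{n-1}\bigl(C^{n-1}_b(g,\eta)(a)\pm H^n(b)\bigr)$; the triangle inequality and $\|H^n\|\le n$ then bound the norm of this preimage by $\kappa\|b\|+n\|b\|=(\kappa+n)\|b\|$, which is precisely $(\kappa+n)$-$\UBC^n$ for $C^\bullet_b(Y;\calB)$.

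I do not expect a real obstacle: this is exactly the kind of ``standard computation for $\UBC$-constants'' recorded in \cite[Proposition~A.3]{liloehmoraschini}, specialized to the chain homotopy produced above. The one structural point where the Kan hypothesis on $Y$ is genuinely used is the passage from a zigzag of homotopies to a single honest homotopy (Proposition~\ref{homotopy-simplification}); without it one would only obtain a finite composite of chain homotopies and the resulting constant would be worse than $\kappa+n$. The only bookkeeping to watch is that every cochain map induced by a morphism of $\calL^*$ is seminorm non-increasing --- which is what allows the constant $\kappa$ to survive the two trips between $C^\bullet_b(X;\calA)$ and $C^\bullet_b(Y;\calB)$ --- and that $b$ being a coboundary, hence a cocycle, is what makes the $H^{n+1}\delta^n$ term vanish.
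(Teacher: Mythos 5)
Your argument is correct and follows the paper's proof exactly: fix a homotopy inverse, use Proposition~\ref{homotopy-simplification} (here is where the Kan hypothesis enters) to get a genuine homotopy from $(f,\theta)\circ(g,\eta)$ to the identity, feed it into Lemma~\ref{lemma:simpl:hom:maps:induce:bounded:chain:} for a cochain homotopy of norm~$\le n$ in degree~$n$, and then run the usual transport-of-$\UBC$-constants computation. The only difference is that you spell out the final bookkeeping that the paper delegates to the cited reference on $\UBC$-constants.
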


The following is an immediate consequence of Corollary~\ref{cor:control_ubc_constant} and Proposition~\ref{prop:bounded products}.

\begin{cor}
\label{cor:weak invariance UBC}
Let~$((X_i;\calA_i))_{i\in I}$ be a collection of objects in~$\calL^*$, whose underlying simplicial sets are Kan complexes, that are pairwise  homotopy equivalent in $\calL^*$.
If for some~$i_0\in I$, the seminormed $R$-cochain complex~$C^\bullet_b(X_{i_0};\calA_{i_0})$ satisfies~$\UBC^n$, then the uniform collection of seminormed $R$-cochain complexes $(C^\bullet_b(X_i;\calA_i))_{i\in I}$ satisfies~$\BBC^n$.
\end{cor}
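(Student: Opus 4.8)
The plan is to deduce a single, $i$-independent $\UBC^n$-constant for every member of the collection and then invoke Proposition~\ref{prop:bounded products}. First I would unwind the hypothesis: by Definition~\ref{defn:UBC} there is some $\kappa\in\R_{\ge 0}$ such that $C^\bullet_b(X_{i_0};\calA_{i_0})$ satisfies $\kappa$-$\UBC^n$.

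Next, I would fix an arbitrary $i\in I$. By assumption there is a homotopy equivalence in $\calL^*$ between $(X_{i_0};\calA_{i_0})$ and $(X_i;\calA_i)$; since a homotopy inverse of a homotopy equivalence in $\calL^*$ is again one, I may take it to be of the form $(X_{i_0};\calA_{i_0})\to(X_i;\calA_i)$. As $X_i$ is a Kan complex, Corollary~\ref{cor:control_ubc_constant} applies (with $X_i$ in the role of the target) and yields that $C^\bullet_b(X_i;\calA_i)$ satisfies $(\kappa+n)$-$\UBC^n$. The crucial point is that the resulting constant $\kappa+n$ does not depend on $i$.

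Finally, the collection $(C^\bullet_b(X_i;\calA_i))_{i\in I}$ is uniform (every collection of bounded cochain complexes of simplicial sets is), and each member satisfies $(\kappa+n)$-$\UBC^n$ with the common constant $\kappa+n$; Proposition~\ref{prop:bounded products} then gives $\BBC^n$ at once.

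There is no genuine obstacle here: the argument is a bookkeeping of constants. The only point that requires care is that Corollary~\ref{cor:control_ubc_constant} must be applied with the \emph{same} $\kappa$ for every $i$, so that the output constant $\kappa+n$ is uniform over $I$; this in turn relies on the hypothesis that \emph{all} the $X_i$ (not merely $X_{i_0}$) are Kan complexes, so that each $X_i$ may legitimately serve as the target in Corollary~\ref{cor:control_ubc_constant}.
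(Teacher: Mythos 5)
Your proposal is correct and is precisely the argument the paper intends (it states the corollary is ``an immediate consequence of Corollary~\ref{cor:control_ubc_constant} and Proposition~\ref{prop:bounded products}''). Both the route through a uniform $\UBC^n$-constant $\kappa+n$ and the observation that each $X_i$ must be Kan so that Corollary~\ref{cor:control_ubc_constant} can be applied with $X_i$ as the target are exactly right.
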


We recall that a (non-empty) Kan complex is $n$-\emph{connected}, $n\ge 0$, 
if it has trivial homotopy groups in all degrees $\leq n$. Then, we have:

\begin{cor}
\label{cor:n-connected bc}
Let $n \geq 1$, let~$X$ be an $n$-connected Kan complex, and let~$\calA$ be a seminormed local coefficient system on~$X$. Then the following hold:
\begin{enumerate}[label=\enum]
\item $H^i_b(X;\calA)=0$ for all~$i\in \{1,\ldots,n\}$;
\item $C^\bullet_b(X;\calA)$ satisfies~$\UBC^i$ for all~$i\le n$.
\end{enumerate}
\end{cor}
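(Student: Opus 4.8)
The plan is to reduce to the case where $X$ is a point by using the comparison with ordinary cohomology together with the homotopy invariance results established above. Since $X$ is $n$-connected, in particular it is simply connected (as $n \geq 1$), so every seminormed local coefficient system $\calA$ on $X$ is (up to isomorphism in $\calL^*$) a constant system at a single seminormed $R$-module $A = \calA(x_0)$. The point to exploit is that an $n$-connected Kan complex is weakly equivalent (as a Kan complex) to $\Delta^0$ through an $(n+1)$-truncation argument — but weak equivalences are \emph{not} bounded-cohomology equivalences in general, so I cannot just invoke Corollary~\ref{cor:weak invariance bc} directly. Instead, the idea is to use that the \emph{ordinary} cochain complex $C^\bullet(X;\calA_{\un})$ is contractible in degrees $\le n$ (by classical obstruction theory / the Hurewicz theorem, since $X$ is $n$-connected and $\calA$ is constant), and to lift this contractibility to the bounded level by controlling norms.

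Here is the sequence of steps I would carry out. \textbf{Step 1:} Reduce to constant coefficients, as above. \textbf{Step 2:} Reduce to the case of a specific convenient model: use a Kan complex $X'$ homotopy equivalent to $X$ in $\calL^*$ for which one can write down an explicit contracting cochain homotopy of $C^\bullet_b(X';\calA')$ in low degrees — e.g., build $X'$ from $X$ by the standard construction, or pass to the minimal Kan complex, so that $X'$ has a single simplex in each dimension $\le n$ (the basepoint degeneracies). Concretely, an $n$-connected minimal Kan complex has $X'_j$ a single point for $j \le n$. By Corollary~\ref{cor:homotopic:map:same:map:in:bc} and Corollary~\ref{cor:control_ubc_constant} it suffices to prove both statements for $X'$. \textbf{Step 3:} For such $X'$, the bounded cochain complex $C^\bullet_b(X';A)$ in degrees $\le n$ agrees with the cochain complex of the $n$-truncated space, which is just $A \xrightarrow{0} A \xrightarrow{\id} A \xrightarrow{0} \cdots$ (a punctured acyclic complex with norm-$1$ or norm-$0$ maps), so it is isometrically contractible in degrees $1, \dots, n$ via a contracting homotopy of norm $\le 1$; this gives both the vanishing in~(i) and $1$-$\UBC^i$ (hence $\UBC^i$) for $i \le n$. \textbf{Step 4:} Transport back: apply Corollary~\ref{cor:homotopic:map:same:map:in:bc} for statement~(i) and Corollary~\ref{cor:control_ubc_constant} for statement~(ii), obtaining $(1+i)$-$\UBC^i$ on $C^\bullet_b(X;\calA)$ for $i \le n$, which in particular gives $\UBC^i$.

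The \textbf{main obstacle} is Step~2–3: producing a model of an $n$-connected Kan complex whose \emph{bounded} cochain complex is genuinely (and with controlled norm) contractible in degrees $\le n$, rather than merely having contractible \emph{ordinary} cochains. The subtlety is exactly the one flagged in the paper — a weak equivalence need not induce a bounded-cohomology isomorphism, so one must either work with a minimal model (where the low-dimensional simplex sets are literally trivial, making the bounded and ordinary cochain complexes coincide in that range) or construct an explicit bounded contracting homotopy directly on $C^\bullet_b(X;\calA)$ using the $n$-connectedness to fill horns coherently with bounded coefficients. I would pursue the minimal-model route since it sidesteps norm estimates entirely: for a minimal Kan complex the $j$-simplices for $j \le n$ reduce to a single degenerate simplex, so $C^j_b = A$ for $j \le n$ with the differentials alternating between $0$ and $\id$, and the contracting homotopy is manifestly of norm $1$. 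The only care needed is checking that passing to the minimal model stays within $\calL^*$ and is a homotopy equivalence there (which follows from Proposition~\ref{prop:characterisation:simpl:homotopy:eq} together with the fact that a minimal Kan complex is a strong deformation retract of the ambient Kan complex).
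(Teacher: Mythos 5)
Your proof is correct, and it takes a genuinely different route from the paper's. The paper uses the Moore--Postnikov stage $P_n(X)$: since $X$ is $n$-connected, $P_n(X)$ is contractible (all its homotopy groups vanish), and the Moore model satisfies $\tau_n\colon X\to P_n(X)$ being an isomorphism in simplicial degrees $\le n$, so $C^\bullet_b(X;\calA)$ and $C^\bullet_b(P_n(X);\cdot)$ coincide through degree $n$. Both the vanishing and the UBC then drop out of Corollary~\ref{cor:weak invariance bc} and Corollary~\ref{cor:control_ubc_constant} applied to $P_n(X)\simeq \Delta^0$, with no explicit computation. You instead pass to the minimal Kan model $X'\hookrightarrow X$ (a strong deformation retract, hence a homotopy equivalence of Kan complexes, hence an $\calL^*$-equivalence by Proposition~\ref{prop:characterisation:simpl:homotopy:eq}), observe that $X'_j$ is a singleton for $j\le n$ by $n$-connectedness plus minimality, and then compute the complex $A\xrightarrow{0}A\xrightarrow{\id}A\xrightarrow{0}\cdots$ in low degrees directly. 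The structural idea is the same — replace $X$ by a model whose low-degree simplices are trivially simple — but the paper's choice of model is already contractible, so the conclusion is immediate from the homotopy-invariance corollaries, whereas yours retains the full homotopy type of $X$ and requires the (easy, but hands-on) explicit verification of the vanishing and of $1$-$\UBC^i$. Your verification is correct: note only that for $i=n$ you need to glance at $\delta^n$ into $C^{n+1}_b(X';A)$, where $X'_{n+1}$ need not be a singleton, but since every face of an $(n+1)$-simplex lands in the unique $n$-simplex, $\delta^n$ is either $0$ or an isometric embedding as the constant functions, and the cohomology and UBC computation still closes up. Both approaches buy the same conclusion; the paper's is shorter in the write-up, yours is more self-contained in that it only relies on the existence of minimal models rather than on the stated properties of the specific Moore construction.
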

\begin{proof}
(i) By construction of the Moore model for the Postnikov tower of~$X$, the map $\tau_n \colon X \to P_n(X)$ is an isomorphism in simplicial degrees $\leq n$ and surjective in degree $n+1$ (Example~\ref{example:moore:postnikov}). 
Hence $\tau_n$ induces an isomorphism on bounded cohomology in degrees $\leq n$. On the other hand, $P_{n}(X)$ is weakly equivalent to $\Delta^0$, so it has trivial bounded cohomology by Corollary~\ref{cor:weak invariance bc}. 

(ii) Since $P_n (X)$ is weakly equivalent to $\Delta^0$, it satisfies $\UBC^i$ for all $i \geq 0$ by Corollary \ref{cor:control_ubc_constant}. The result follows because the bounded cochain complex of $P_n(X)$ agrees with the bounded cochain complex of $X$ in degrees $\leq n$.
\end{proof}

\subsection{Local coefficient system associated to a Kan fibration}
\label{sec:coefficient fibres}

A key example of a seminormed local coefficient system comes from the bounded cohomology of the fibers of a Kan fibration. The construction is analogous to the case of ordinary cohomology. 

Let~$f\colon X\to Y$ be a map of simplicial sets. For every $p \geq 0$, we consider the induced map of simplicial sets
\[
	f^{\Delta^p}\colon X^{\Delta^p}\to Y^{\Delta^p},
\]
where $X^{\Delta^p}$ and $Y^{\Delta^p}$ denote the (simplicial) mapping spaces. Given a $p$-simplex $\tau\colon \Delta^p\to Y$ of~$Y$, we denote the corresponding fiber of $f^{\Delta^p}$ by $F_\tau\coloneqq (f^{\Delta^p})^{-1}(\tau)$. 
This yields a functor
\[
	F_\bullet\colon (\Delta \shortdownarrow Y)^{\op} \to \SSet, \quad \tau\mapsto F_\tau.
\]
Let~$\calA$ be a seminormed local coefficient system on~$X$. Consider the composition
\[
    \iota_\tau\colon F_\tau\hookrightarrow X^{\Delta^p}\to X
\]
where the second map is given by restriction to/evaluation at the last $0$-simplex of~$\Delta^p$, given by $[0] \to [p]$, $0 \mapsto p$. Denote by~$\calA_\tau\coloneqq \iota_\tau^*\calA$ the restricted local coefficient system on~$F_\tau$.
This yields a refinement of the functor~$F_\bullet$ with values in~$\calL^*$
\[
	(F_\bullet;\calA_\bullet)\colon (\Delta\shortdownarrow Y)^{\op} \to \calL^*, \ \ \tau\mapsto (F_\tau;\calA_\tau).
\]
In detail, a morphism~$\alpha\colon \tau\to \tau'$ in~$\Delta\shortdownarrow Y$ induces a triangle
\[\begin{tikzcd}
	F_{\tau'} \ar{rr}{F(\alpha)}\ar{dr}[swap]{\iota_{\tau'}} && F_{\tau} \ar{dl}{\iota_{\tau}} \\
	& X
\end{tikzcd}\]
which commutes up to a canonical homotopy $\iota_{\tau} \circ F(\alpha) \simeq \iota_{\tau'}$ between evaluation functors at different $0$-simplices. The canonical homotopy gives rise to a natural isomorphism of coefficient systems on~$F_{\tau'}$ 
\[
F(\alpha)^* \calA_\tau = (\iota_{\tau}\circ F(\alpha))^*\calA\Rightarrow \iota_{\tau'}^*\calA = \calA_{\tau'}.\] 
Thus we obtain a map $(F_{\tau'};\calA_{\tau'})\to (F_{\tau};\calA_{\tau})$ in~$\calL^*$. Passing to bounded cohomology, we obtain a functor:
$$H^*_b(F_{\bullet}; \calA_\bullet) \colon (\Delta \shortdownarrow Y) \to \mathrm{gr} \Mod^{\sn}_R.$$

\begin{prop}\label{prop:all:fibers:are:Kan:and:we}
Let $f \colon X \to Y$ be a Kan fibration and let~$\calA$ be a seminormed local coefficient system on~$X$. 
Then the following hold:
\begin{enumerate}[label=\enum]
    \item for every $n \in \N$ and every $0$-simplex $\tau \in (Y^{\Delta^n})_0$, the fiber $F_\tau$ is a Kan complex;
\item for every morphism~$\alpha\colon \tau\to \tau'$ in~$\Delta\shortdownarrow Y$, the induced map~$F(\alpha)\colon (F_{\tau'};\calA_{\tau'})\to (F_\tau;\calA_\tau)$ is a homotopy equivalence in~$\calL^*$.
\end{enumerate}
\end{prop}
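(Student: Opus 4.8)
\textbf{Proof plan for Proposition~\ref{prop:all:fibers:are:Kan:and:we}.}

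For part~(i), the plan is to exhibit the fiber $F_\tau$ as the pullback of the Kan fibration $f \colon X \to Y$ along the map $\tau \colon \Delta^0 \to Y^{\Delta^n}$, or more precisely to observe that $F_\tau$ fits into a pullback square. The key point is that $f^{\Delta^n} \colon X^{\Delta^n} \to Y^{\Delta^n}$ is again a Kan fibration: this follows from the fact that Kan fibrations are stable under the exponential/cotensor by an arbitrary simplicial set (pullback-power / Leibniz exponential with $\partial\Delta^p \hookrightarrow \Delta^p$ being a cofibration), together with the fact that $X^{\Delta^n} \to Y^{\Delta^n} \times_{(Y^{\partial\Delta^n})} X^{\partial\Delta^n}$ composed appropriately still yields a Kan fibration; alternatively and more directly, $(-)^{\Delta^n}$ preserves Kan fibrations because $\Delta^n$ is cofibrant and the model structure is cartesian. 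Since the fiber of a Kan fibration over a $0$-simplex is a Kan complex, $F_\tau = (f^{\Delta^n})^{-1}(\tau)$ is a Kan complex. This is routine once the stability statement is invoked.

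For part~(ii), by Proposition~\ref{prop:characterisation:simpl:homotopy:eq} (the converse direction, using that both $F_{\tau'}$ and $F_\tau$ are Kan complexes by part~(i)), it suffices to show that $F(\alpha) \colon F_{\tau'} \to F_\tau$ is a homotopy equivalence of simplicial sets and that the associated natural transformation of local coefficient systems $F(\alpha)^*\calA_\tau \Rightarrow \calA_{\tau'}$ is a natural isomorphism. The latter was already established in the construction preceding the proposition (it is built from the canonical homotopy between evaluation functors, and $\calA$ sends every morphism of $\pi(X)$ to an isometric isomorphism, so the induced transformation is automatically a natural isomorphism). So the real content is that $F(\alpha)$ is a homotopy equivalence. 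Here the plan is to reduce to the two generating cases of morphisms in $\Delta\shortdownarrow Y$: face maps and degeneracy maps (every $\alpha$ factors as a composite of these, and homotopy equivalences are closed under composition). For a morphism $\alpha$ induced by $u \colon [m] \to [n]$ with $\tau = \tau' \circ j(u)$, one gets a commutative square relating $F_{\tau'}$, $F_\tau$, $X^{\Delta^n}$, and $X^{\Delta^m}$; since $X^{\Delta^n} \to X^{\Delta^m}$ (restriction along $u$) sits in a map of Kan fibrations over $Y^{\Delta^n} \to Y^{\Delta^m}$, and any $u \colon [m] \to [n]$ is a composite of coface and codegeneracy maps, it is enough to handle those. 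For a codegeneracy (so $m = n+1$), the inclusion $\Delta^m \to \Delta^n$ (as simplicial sets in the other direction) is an anodyne extension... — more cleanly: every $u \colon [m]\to[n]$ in $\Delta$ induces a map $\Delta^m \to \Delta^n$ that is a simplicial homotopy equivalence (both $\Delta^m$ and $\Delta^n$ are contractible), and cotensoring a Kan fibration with a homotopy equivalence between cofibrant objects yields a map of Kan fibrations that is a (fiberwise, hence on each fiber) homotopy equivalence; restricting to the fiber over a point gives that $F(\alpha)$ is a homotopy equivalence of Kan complexes.

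The cleanest route, which I would take, is: since $\Delta^m$ and $\Delta^n$ are both weakly contractible and the map $j(u)\colon\Delta^m\to\Delta^n$ is therefore a weak equivalence between cofibrant objects, the induced map $X^{\Delta^n}\to X^{\Delta^m}$ is a weak equivalence (cotensoring with a weak equivalence of cofibrant objects, using that $X$ is fibrant—note $X$ is a Kan complex since $X\to Y$ is a Kan fibration but $Y$ need not be; however $X^{\Delta^\bullet}$ still behaves well because we only need the relevant mapping objects), and likewise $Y^{\Delta^n}\to Y^{\Delta^m}$; one then checks this is a map of Kan fibrations inducing a weak equivalence on total spaces and base, hence a weak equivalence on the fiber $F(\alpha)$ by the long exact sequence of homotopy groups (or by properness of the model structure). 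By part~(i) the fibers are Kan complexes, so a weak equivalence between them is a homotopy equivalence by~\cite[Theorem~I.1.10]{goerss2009simplicial}, and we conclude via the converse direction of Proposition~\ref{prop:characterisation:simpl:homotopy:eq}.

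\textbf{Main obstacle.} The subtle point is that $Y$ is \emph{not} assumed to be a Kan complex, so statements like ``$Y^{\Delta^n}\to Y^{\Delta^m}$ is a weak equivalence'' are not immediate from the cartesian model structure (which requires fibrant targets for cotensoring to be homotopical). The fix is to work relatively: $f^{\Delta^n}\colon X^{\Delta^n}\to Y^{\Delta^n}$ and $f^{\Delta^m}\colon X^{\Delta^m}\to Y^{\Delta^m}$ are Kan fibrations (by the pullback-power axiom, which needs only that $\partial\Delta^p\hookrightarrow\Delta^p$ is a cofibration and $f$ a fibration — no fibrancy of $Y$ needed), and the restriction maps fit into a commuting square; then one identifies the induced map on vertical homotopy fibers directly, showing $F(\alpha)$ is a weak equivalence by a fiberwise argument that only uses the Kan fibration property of $f^{\Delta^\bullet}$ and the homotopy between the two evaluation maps $\iota_\tau\circ F(\alpha)\simeq\iota_{\tau'}$ already exhibited. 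Carefully phrasing this fiberwise argument without invoking fibrancy of $Y$ is the one place requiring real care; everything else is bookkeeping with adjunctions and the already-established properties of $\calA$.
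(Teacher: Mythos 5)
Your route is essentially the paper's: part~(i) follows from the fact that $(-)^{\Delta^n}$ preserves Kan fibrations (this needs only that $f$ be a Kan fibration and $\Delta^n$ a simplicial set; the paper cites the pullback-power corollary in Goerss--Jardine), so the fiber over a vertex is a Kan complex. For part~(ii), the paper likewise shows that $F(\alpha)$ is a weak equivalence of Kan complexes (citing the gluing lemma for fibers of Kan fibrations, i.e.\ your ``properness / long exact sequence'' step) and then concludes via the converse direction of Proposition~\ref{prop:characterisation:simpl:homotopy:eq}, exactly as you propose.

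Two points in your write-up need correcting, though neither is fatal. First, the parenthetical claim that ``$X$ is a Kan complex since $X\to Y$ is a Kan fibration'' is false in general: a Kan fibration over a non-Kan base need not have Kan total space (take $\id_Y$). Your subsequent ``work relatively'' paragraph implicitly drops this, so the slip does not propagate, but it should not appear. Second, the ``main obstacle'' you identify dissolves entirely, and for a more elementary reason than you give. For \emph{any} simplicial set $Y$ and any $u\colon[m]\to[n]$, the induced map $j(u)^*\colon Y^{\Delta^n}\to Y^{\Delta^m}$ is a genuine \emph{simplicial} homotopy equivalence, not merely an abstract weak equivalence: the contraction $h\colon\Delta^n\times\Delta^1\to\Delta^n$ onto a vertex induces a deformation retraction of $Y^{\Delta^n}$ onto the constant maps $Y$, and $j(u)^*$ commutes with the constant-map inclusions. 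No fibrancy of $Y$ is used anywhere. This is precisely the deformation-retraction argument the paper itself carries out for $X_{\bullet,q}\to X_{\bullet,0}$ in Section~\ref{sec:sss}. In particular the reduction to coface and codegeneracy maps is unnecessary, since any $j(u)\colon\Delta^m\to\Delta^n$ is already a homotopy equivalence of contractible simplicial sets. With these two corrections, your plan closes cleanly and matches the paper's proof.
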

\begin{proof}
(i) Since $f\colon X\to Y$ is a Kan fibration, the induced map $f^{\Delta^n} \colon X^{\Delta^n} \to Y^{\Delta^n}$ is also a Kan fibration~\cite[Corollary~I.5.3]{goerss2009simplicial}. Hence its fibers are Kan complexes. 

(ii) 
The induced map between the fibers $F_{\tau'}\to F_\tau$ is a weak equivalence of Kan complexes~\cite[Corollary II.8.13]{goerss2009simplicial}. 
Since we already know that $F(\alpha)^*\calA_\tau \Rightarrow \calA_{\tau'}$ is a natural isomorphism, the map~$F(\alpha)$ is a homotopy equivalence in $\calL^*$~by Proposition \ref{prop:characterisation:simpl:homotopy:eq}. 
\end{proof}

Given a Kan fibration $f \colon X \to Y$ between simplicial sets, every path~$\gamma \in (Y^{\Delta^1})_0$ with~$\gamma(0)=y$ and~$\gamma(1)=z$ induces a zigzag of homotopy equivalences in~$\calL^*$
\[
	(F_y;\calA_y)\leftarrow (F_\gamma;\calA_\gamma) \to (F_z;\calA_z)
\]
and hence, by Corollary~\ref{cor:weak invariance bc}, an isometric isomorphism in bounded cohomology
\[
	\gamma_*\colon H^*_b(F_y;\calA_y)\xrightarrow{\cong} H^*_b(F_z;\calA_z)
\]
which depends only on the homotopy class of $\gamma$. Thus, for every~$q\geq 0$, we obtain a seminormed local coefficient system
\begin{equation}
\label{eqn:coefficient fibres}
	\calH^q_b(F;\calA)\colon \pi(Y)\to \Mod^\sn_R, \quad y\mapsto H^q_b(F_y;\calA_y).
\end{equation}
If the fiber~$F_y$ is connected, then we have the following identification in degree~$0$ (Proposition~\ref{prop:degree zero}) for every basepoint~$x_y\in (F_y)_0$:
\[
	H^0_b(F_y;\calA_y)\cong \calA_y(x_y)^{\pi_1(F_y,x_y)}.
\]

\section{Serre spectral sequence in bounded cohomology}\label{sec:sss}
	Let~$f\colon X\to Y$ be a Kan fibration of simplicial sets
    and let~$\calA$ be a seminormed local coefficient system on~$X$ (over a normed ring $R$).
We denote by~$\iota_y\colon F_y\coloneqq \Delta^0 \times_{Y} X \hookrightarrow X$ the fiber inclusion over a $0$-simplex $y \in Y_0$, and by~$\calA_y\coloneqq \iota_y^*\calA$ the restricted coefficient system on~$F_y$.
	We obtain a seminormed local coefficient system~\eqref{eqn:coefficient fibres} on~$Y$ for every~$q \geq 0$:
	\[
		\calH^q_b(F;\calA)\colon \pi(Y)\to \Mod^\sn_R, \quad y\mapsto H^q_b(F_y;\calA_y).
	\]	
In this section we prove our main result, the existence of a Serre spectral sequence in bounded cohomology:

\begin{thm}[Serre spectral sequence in bounded cohomology]
\label{thm:sss}
	Let~$f\colon X\to Y$ be a Kan fibration of simplicial sets and let~$\calA$ be a seminormed local coefficient system on~$X$.
	Then there is a first-quadrant spectral sequence~$(E^{\bullet,\bullet}_r)_r$ converging to $H^*_b(X;\calA)$.
    We have an isomorphism of seminormed $R$-modules
    \[
        E^{p,q}_2\cong H^p_b(Y;\calH^q_b(F;\calA))
    \]
    in each of the following cases:
    \begin{enumerate}[label=\enum]
        \item $Y$ has only finitely many $n$-simplices in every dimension $n \leq p+1$; 
        \item $q=0$;
        \item $Y$ is connected and for some 0-simplex~$y\in Y_0$ the seminormed $R$-cochain complex~$C^\bullet_b(F_y;\calA_y)$ satisfies~$\UBC^q$.
    \end{enumerate}
\end{thm}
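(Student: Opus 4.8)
The plan is to follow Dress's construction of the Serre spectral sequence \cite{dress}, transported to the bounded-cochain setting. First I would replace the Kan fibration $f \colon X \to Y$ by the associated simplicial system of fibers. Concretely, the simplex category $\Delta \shortdownarrow Y$ indexes the functor $(F_\bullet; \calA_\bullet)$ of Section~\ref{sec:coefficient fibres}, and composing with the bounded-cochain-complex functor on $\calL^*$ gives a functor $\Delta \shortdownarrow Y \to (\text{seminormed } R\text{-cochain complexes})$, $\tau \mapsto C^\bullet_b(F_\tau; \calA_\tau)$. Taking, in each simplicial degree $p$, the bounded product over the $p$-simplices $\tau \in Y_p$ of these cochain complexes produces a double complex $K^{p,q}$; the horizontal differential is the (alternating sum of) face-and-coefficient maps coming from the cosimplicial structure of $\Delta \shortdownarrow Y$, and the vertical differential is the internal bounded-cochain differential of each fiber. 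A first step is to check that all the relevant maps are bounded and that the collections are uniform (which is automatic for bounded cochain complexes of simplicial sets, by the Example after Definition~\ref{defn:Phi iso}) so that the bounded products and their differentials are well defined, and that the total complex $\Tot K^{\bb}$ computes $H^*_b(X;\calA)$. This last identification is the analogue of Dress's acyclic-models / coend argument: one uses that $X = \mathrm{colim}_{(\sigma \colon \Delta^n \to Y)} (\text{appropriate fiber data})$ pulled back along $f$, together with the fact that bounded cochains turn this colimit into a bounded product over simplices; I would organize this as an isometric isomorphism $C^\bullet_b(X;\calA) \cong \Tot K^{\bb}$, or at least a natural cochain homotopy equivalence, arising from the fact that for each $n$-simplex of $X$ its image in $Y$ together with a lift determines a simplex of the corresponding fiber.

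Next I would run the two standard spectral sequences of the double complex $K^{\bb}$. Filtering by the base degree $p$ gives a first-quadrant spectral sequence $(E^{\bullet,\bullet}_r)_r$ with $E^{p,q}_1 = H^q$ of the $p$-th column. The $p$-th column is $\bprod{\tau \in Y_p} C^\bullet_b(F_\tau;\calA_\tau)$, so $E^{p,q}_1 = H^q\bigl(\bprod{\tau \in Y_p} C^\bullet_b(F_\tau;\calA_\tau)\bigr)$. Here is where the three cases enter: I claim that in each of (i), (ii), (iii) the comparison map $\Phi^q$ of \eqref{comparison-map-Phi} for the collection $(C^\bullet_b(F_\tau;\calA_\tau))_{\tau \in Y_p}$ is an isomorphism, i.e.\ this collection satisfies $\BBC^q$ for all relevant $p$. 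Granting this, $E^{p,q}_1 \cong \bprod{\tau \in Y_p} H^q_b(F_\tau;\calA_\tau)$, and by Proposition~\ref{prop:all:fibers:are:Kan:and:we} the homotopy-equivalence $F(\alpha)$ induces isometric isomorphisms $H^q_b(F_\tau;\calA_\tau) \cong H^q_b(F_{\tau(p)};\calA_{\tau(p)}) = \calH^q_b(F;\calA)(\tau(p))$, which identifies the $E_1$-page (with its horizontal differential) with the bounded cochain complex of $Y$ with coefficients in the seminormed local system $\calH^q_b(F;\calA)$. Taking cohomology in the $p$-direction then yields $E^{p,q}_2 \cong H^p_b(Y; \calH^q_b(F;\calA))$ as seminormed $R$-modules. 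To be careful one must also verify that the horizontal differential on $E_1$ really is (up to the coefficient identifications) the differential $\delta^\bullet$ of $C^\bullet_b(Y;\calH^q_b(F;\calA))$, including the local-coefficient twist; this is a bookkeeping check comparing the face maps in $\Delta \shortdownarrow Y$ with the formula for $\delta$ in Section~\ref{sec:bc}, using the canonical homotopies $\iota_\tau \circ F(\alpha) \simeq \iota_{\tau'}$.

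It remains to establish $\BBC^q$ for the fiber collection over $Y_p$ in each case. In case (ii), $q = 0$: a finite collection satisfies $\BBC$ in all degrees (Example~\ref{ex:finite collections}) but more to the point $H^0$ commutes with arbitrary bounded products since it is computed by a kernel, so $\Phi^0$ is always an isomorphism; alternatively $\UBC^0$ holds trivially for every cochain complex (there are no coboundaries in degree $0$, or rather $\delta^{-1} = 0$), so Proposition~\ref{prop:bounded products} applies. In case (iii): $Y$ is connected, so all fibers $F_\tau$ are pairwise homotopy equivalent in $\calL^*$ to $(F_y;\calA_y)$ (Proposition~\ref{prop:all:fibers:are:Kan:and:we}(ii) and path-connectivity of $\pi(Y)$); since $C^\bullet_b(F_y;\calA_y)$ satisfies $\UBC^q$, Corollary~\ref{cor:weak invariance UBC} gives $\BBC^q$ for the collection $(C^\bullet_b(F_\tau;\calA_\tau))_{\tau \in Y_p}$ directly. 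In case (i): $Y_n$ is finite for all $n \le p+1$; the $E_2$-term in bidegree $(p,q)$ only involves the columns $K^{p-1,\bullet}$, $K^{p,\bullet}$, $K^{p+1,\bullet}$ (to compute the cohomology of the horizontal differential at spot $p$) and hence only bounded products over $Y_{p-1}$, $Y_p$, $Y_{p+1}$, all finite; a finite collection satisfies $\BBC^q$ in every degree (Example~\ref{ex:finite collections}). Hence in all three cases the identification of $E^{p,q}_2$ holds. I expect the main obstacle to be the first step — carefully setting up the double complex $K^{\bb}$ and proving the isometric (or chain-homotopy) identification $\Tot K^{\bb} \simeq C^\bullet_b(X;\calA)$ in the bounded setting, where one cannot appeal to exactness/excision and must instead exploit that bounded cochains convert the colimit presentation of $X$ over $\Delta \shortdownarrow Y$ into a bounded product, together with a contraction of the augmented complex of simplices of each fiber; the convergence of the spectral sequence is then immediate from first-quadrant-ness.
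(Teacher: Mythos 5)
Your proposal follows the same Dress-style strategy as the paper: build a double complex from a simplicial family of fiber cochain complexes over $\Delta\shortdownarrow Y$, filter in the base direction, and use the bounded boundary condition (verified via $\UBC$ and homotopy invariance) to identify the $E_2$-page. The BBC verifications for cases (i), (ii), (iii) match the paper's argument, with the minor remark that in case (i) the identification of $E^{p,q}_2$ actually only needs $Y_p$ and $Y_{p+1}$ finite (the image of the horizontal differential from degree $p-1$ is controlled through the always-surjective comparison map $\Phi^q$ without any BBC hypothesis there), so listing $Y_{p-1}$ is harmless but not required.

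The one genuine gap is the step you yourself flag as the main obstacle: showing that the total complex of your double complex computes $H^*_b(X;\calA)$. Your proposal overstates this as an ``isometric isomorphism $C^\bullet_b(X;\calA)\cong\Tot K^\bb$'' --- this is false: the total complex has, e.g., a contribution in degree $n$ from every $(p,q)$ with $p+q=n$, so there is no isomorphism at the cochain level. What the paper does instead is package the double complex as coming from a genuinely \emph{bi}simplicial set $S_{\bullet,\bullet}f$, a $(p,q)$-simplex being a square $\Delta^p\times\Delta^q\to X$ over $\Delta^p\to Y$. This gives a \emph{second} simplicial direction that your description (which fixes $p$ first and takes fibers) leaves implicit: fixing $q$, one gets the simplicial set $X_{\bullet,q}=Y\times_{Y^{\Delta^q}}X^{\Delta^q}$, which for every $q$ is homotopy equivalent to $X$ (even as an object of $\calL^*$). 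Running the \emph{other} spectral sequence of the double complex, its $E_1$-page is $H^p_b(X_{\bullet,q};\calA_q)\cong H^p_b(X;\calA)$ in every column, the $d_1$ alternates between the identity and zero, and the spectral sequence collapses to $H^*_b(X;\calA)$ on the $q=0$ line (Lemma~\ref{lem:firstE}). This replaces the explicit contraction/homotopy-equivalence argument you sketch, avoids any coend or acyclic-models machinery, and is the piece you would need to make precise; as written, your total-complex identification is not established.
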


\subsection{Bisimplicial set of a map}

We follow the construction of the classical Serre spectral sequence due to Dress~\cite{dress}. This applies to an arbitrary map $f \colon X \to Y$ which is not necessarily a Kan fibration.

\begin{defi}[Bisimplicial set of a map]
Let~$f\colon X\to Y$ be a map of simplicial sets.
A \emph{$(p,q)$-simplex of~$f$} is a diagram of the form
\[\begin{tikzcd}
	\Delta^p\times \Delta^q \ar{r}{\sigma} \ar{d}{\pr_1} & X \ar{d}{f} \\
	\Delta^p \ar{r}{\tau} & Y
\end{tikzcd}\]
where $\pr_1\colon \Delta^p\times \Delta^q\to \Delta^p$ is the simplicial projection onto the first factor.
We denote by~$S_{p,q}f$ the set of all $(p,q)$-simplices of~$f$. By identifying $n$-simplices with maps from $\Delta^n$, this can equivalently be expressed by
\[
	S_{p,q}f\coloneqq \bigl\{(\sigma,\tau)\in (X^{\Delta^q})_p \times Y_p \bigm\vert f^{\Delta^q}(\sigma) = c(\tau) \bigr\}
\]
where $c \colon Y \to Y^{\Delta^q}$ denotes the inclusion of constant maps induced by $\Delta^q \to \Delta^0$.
The \emph{bisimplicial set~$S_\bb f$ of~$f$} is the functor
\[
	S_\bb f\colon \Delta^\op\times \Delta^\op\to \Set,
\]
which is given on objects by $([p],[q])\mapsto S_{p,q}f$ and on morphisms by precomposition.
\end{defi}

Such a bisimplicial set can be interpreted as a simplicial object in the category of simplicial sets in two ways; by fixing~$q$ and letting~$p$ vary, and vice versa.
First, we regard $S_\bb f$ as the functor (using the same notation):
\[
	S_\bb f\colon \Delta^\op\to \SSet, \quad[q]\mapsto S_{\bullet,q}f.
\]
 A $(p,q)$-simplex of~$f$ is equivalently specified by an adjoint diagram of the form
\[\begin{tikzcd}
	\Delta^p \ar{r} \ar{d} & X^{\Delta^q} \ar{d}{f^{\Delta^q}} \\
	Y \ar{r}{c} & Y^{\Delta^q}.
\end{tikzcd}\]
Such a diagram is in turn equivalent to a $p$-simplex
 $\Delta^p\to Y\times_{Y^{\Delta^q}} X^{\Delta^q}$.
 Hence we may identify~$S_\bb f$ with the functor
\begin{equation}
\label{eqn:X_bulletbullet}
    \Delta^\op\to \SSet, \quad [q]\mapsto X_{\bullet,q}\coloneqq (Y\times_{Y^{\Delta^q}} X^{\Delta^q})_\bullet.
\end{equation}

We claim that for every map $[q] \to [q']$ in~$\Delta$, the induced map $X_{\bullet, q'} \to X_{\bullet, q}$ is a homotopy equivalence.
Indeed, the map~$[q]\to [0]$ in~$\Delta$ induces a map $X \cong X_{\bullet, 0} \to X_{\bullet, q}$ which admits a retraction 
\[
    \rho_q\colon X_{\bullet, q} \to X_{\bullet, 0}
\]
induced by the map $0\colon [0] \to [q]$. The homotopy $h \colon \Delta^q \times \Delta^1 \to \Delta^q$ from the constant map at the $0$-th vertex to the identity map 
induces a map
$$X_{\bullet, q} = Y \times_{Y^{\Delta^q}} X^{\Delta^q} \to  Y^{\Delta^1} \times_{Y^{\Delta^q \times \Delta^1}} X^{\Delta^q \times \Delta^1} = X_{\bullet, q}^{\Delta^1}$$
defined by $h$ and the constant paths $Y \to Y^{\Delta^1}$. This provides a homotopy from the composite $X_{\bullet, q} \to X_{\bullet, 0} \to X_{\bullet, q}$ to the identity map which shows that~$\rho_q$ is a homotopy equivalence.
Hence, for any map~$\alpha\colon [q]\to [q']$ in~$\Delta$, we obtain a triangle of homotopy equivalences
\[
	\begin{tikzcd}
		X_{\bullet,q'} \ar{rr}{\alpha^*}[swap]{\simeq}\ar{dr}{\simeq}[swap]{\rho_{q'}}
		&& X_{\bullet,q}\ar{dl}{\rho_q}[swap]{\simeq} \\
		& X_{\bullet, 0}
	\end{tikzcd}
\]
which commutes up to canonical homotopy. In addition, it follows that any two maps $\alpha, \alpha' \colon [q] \to [q']$ induce homotopic maps $\alpha^*, (\alpha')^*$. 

	In the above situation, let additionally a seminormed local coefficient system~$\calA$ on~$X \cong X_{\bullet, 0}$ 
    be given.
	Denote by $\calA_q\coloneqq \rho_q^*\calA$ the restricted coefficient system on~$X_{\bullet, q}$.
	We obtain a refinement of~$X_{\bullet, q}$ with values in~$\calL^*$:
	\[
		(X_{\bullet, q};\calA_\bullet)\colon \Delta^\op\to \calL^*, \quad [q]\mapsto (X_{\bullet, q};\calA_q)
	\]
where the morphisms of $\Delta^\op$ are sent to homotopy equivalences in $\calL^*$. Indeed, the homotopy constructed above extends to a homotopy in $\calL^*$ that is defined similarly and is constant at the level of local coefficients -- because the homotopy $h$ is constant at the $0$-th vertex. 

\medskip
The second interpretation of~$S_\bb f$ is as the functor (using again the same notation):
\[
	S_\bb f\colon \Delta^\op\to \SSet, \quad [p]\mapsto S_{p,\bullet}f.
\]
A $(p,q)$-simplex~$(\sigma,\tau)$ of~$f$ is equivalently specified by an adjoint diagram of the form
\[\begin{tikzcd}
	\Delta^q \ar{r} \ar{d} & X^{\Delta^p} \ar{d}{f^{\Delta^p}} \\
	\Delta^0 \ar{r}{\tau} & Y^{\Delta^p}
\end{tikzcd}\]
which is in turn equivalent to a $q$-simplex $\Delta^q\to (f^{\Delta^p})^{-1}(\tau)$.
For any~$\tau\in (Y^{\Delta^p})_0$, we write $F_\tau\coloneqq (f^{\Delta^p})^{-1}(\tau)$ as in Section~\ref{sec:coefficient fibres}. 
Hence we may identify~$S_\bb f$ with the functor 
\begin{equation}\label{eqn:firstqthenp}
    \Delta^\op\to \SSet, \quad [p]\mapsto \coprod_{\tau\in (Y^{\Delta^p})_0} (F_\tau)_\bullet.
\end{equation}
Given a seminormed local coefficient system on~$X$, this functor admits a refinement with values in~$\calL^*$ (Section~\ref{sec:coefficient fibres}).

\subsection{Serre spectral sequence}
Let~$f\colon X\to Y$ be a map of simplicial sets and let~$\calA$ be a seminormed local coefficient system on~$X$.
We define the \emph{double cochain complex $(C^\bb_b(f;\calA),d_h,d_v)$ of bounded $(p,q)$-cochains of~$f$ with coefficients in~$\calA$} as follows:
the modules are given by
\[
	C^{p,q}_b(f;\calA) \coloneqq \bigl\{\varphi\colon S_{p,q}f\to \prod_{x\in X_0} \calA(x) \bigm\vert \varphi(\sigma,\tau)\in \calA(\sigma(0,0)), \ \ \|\varphi\|_\infty<+\infty \bigr\}
\]
where $\|\varphi\|_\infty\coloneqq \sup_{(\sigma, \tau) \in S_{p,q}f} \|\phi(\sigma, \tau) \|_{\calA(\sigma(0, 0))}$.
The horizontal and vertical differentials
\begin{align*}
	d_h\colon & C_b^{p,q}(f;\calA)\to C_b^{p+1,q}(f;\calA) \\
	d_v\colon & C_b^{p,q}(f;\calA)\to C_b^{p,q+1}(f;\calA)
\end{align*}
are given respectively by
\begin{align*}
(d_h \varphi) (\sigma, \tau) \coloneqq\ &  \mathcal{A}([\sigma |_{[(0, 0), (1, 0)]}]^{-1})\varphi (\sigma\circ (\delta_0^{p+1} \times \id_{\Delta^q}), \tau \circ \delta_0^{p+1}) \\
&+ \sum_{i = 1}^{p+1} (-1)^i \varphi (\sigma\circ (\delta_i^{p+1} \times \id_{\Delta^q}), \tau \circ \delta_i^{p+1}) \\
(d_v \varphi) (\sigma, \tau) \coloneqq\ & (-1)^p \mathcal{A}([\sigma |_{[(0, 0), (0, 1)]}]^{-1}) \varphi(\sigma\circ (\id_{\Delta^p} \times \delta^{q+1}_0), \tau) \\
&+\sum_{j = 1}^{q+1} (-1)^{j+p} \varphi(\sigma \circ (\id_{\Delta^p} \times \delta^{q+1}_j), \tau).
\end{align*}
Here $\delta^{*}_k \colon \Delta^{*-1} \to \Delta^{*}$ is the inclusion of the $k$-th face. 
It is straight-forward to verify that $(C^{\bullet,\bullet}_b(f;\calA),d_h,d_v)$ is indeed a double cochain complex, that is, we have $d_h \circ d_h = 0$, $d_v \circ d_v = 0$, and $d_v \circ d_h + d_h \circ d_v = 0$.
Note that the differentials of the double cochain complex are induced from the simplicial operators of the bisimplicial set $S_{\bullet, \bullet} f$ in the standard way (up to a convention for the signs). 

The double cochain complex~$C^\bb_b(f;\calA)$ yields two spectral sequences~$(\firstE^\bb_r)_r$ and~$(\secondE^\bb_r)_r$ with second pages
\begin{align*}
	\firstE^{q,p}_2 &\cong H^q(H^p(C^\bb_b(f;\calA),d_h),d_v) \\
	\secondE^{p,q}_2 &\cong H^p(H^q(C^\bb_b(f;\calA),d_v),d_h)
\end{align*}
both converging to the cohomology of the associated total complex $\Tot(C^\bb_b(f;\calA))$.

First, we examine the spectral sequence~$(\firstE^\bb_r)_r$ which allows us to identify the cohomology of the total complex. We emphasize that the map~$f$ need not be a Kan fibration.

\begin{lemma}
\label{lem:firstE}
	Let~$f\colon X\to Y$ be a map of simplicial sets and let~$\calA$ be a seminormed local coefficient system on~$X$. We have an isomorphism of seminormed $R$-modules
	\[
		\firstE^{q,p}_2\cong \begin{cases}
			H^p_b(X;\calA) & \textup{if }q=0; \\
			0 & \textup{if }q>0.
		\end{cases}
	\]
	In particular, we have $H^*(\Tot(C^\bb_b(f;\calA)))\cong H^*_b(X;\calA)$ (as $R$-modules).
	\begin{proof}
		Under the identification of~$S_{\bullet,\bullet}f$ as in~\eqref{eqn:X_bulletbullet}, the first page takes the form
        \[
            \firstE^{q,p}_1 = H^p_b(X_{\bullet,q};\calA_q).
        \]
		In order to identify the second page, we have to explicitly describe the differential $\firstd_1^{q,p}\colon \firstE^{q,p}_1\to \firstE^{q+1,p}_1$.
		Every map~$[q]\to [q+1]$ in~$\Delta$ induces a homotopy equivalence $(X_{\bullet, q+1};\calA_{q+1})\to (X_{\bullet, q};\calA_q)$ in~$\calL^*$ and hence an (isometric) isomorphism on bounded cohomology (Corollary~\ref{cor:homotopic:map:same:map:in:bc})
		\[
			H^p_b(X_{\bullet, q};\calA_{q})\xrightarrow{\cong} H^p_b(X_{\bullet, q+1};\calA_{q+1}).
		\]
		Since any two maps from~$\Delta^{q}$ to~$\Delta^{q+1}$ are canonically homotopic, they induce the same isomorphism on bounded cohomology by Corollary~\ref{cor:homotopic:map:same:map:in:bc}.
		Identifying these cohomology groups with~$H^p_b(X;\calA) \cong H^p_b(X_{\bullet, 0};\calA)$,
		the differential becomes the map
		\[
			\firstd^{q,p}_1\colon H^p_b(X;\calA)\to H^p_b(X;\calA)
		\]
		that is the identity for odd~$q$ and zero for even~$q$.
		Hence the second page~$\firstE^\bb_2$ can be identified as claimed.
		In particular, the spectral sequence~$(\firstE^\bb_r)_r$ collapses at the second page and converges to~$H^*_b(X;\calA)$.
	\end{proof}
\end{lemma}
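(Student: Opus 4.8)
The plan is to run the spectral sequence $(\firstE^\bb_r)_r$ — the one obtained by first taking $d_h$-cohomology — and to feed into it the homotopy-theoretic facts about the simplicial objects $X_{\bullet,q}$ established earlier in this section.

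First I would identify the $E_1$-page. Under the interpretation~\eqref{eqn:X_bulletbullet} of $S_\bb f$ as the functor $[q]\mapsto X_{\bullet,q}$, and relabeling $(p,q)$-simplices of $f$ as $p$-simplices of $X_{\bullet,q}$, for each fixed $q$ the complex $(C^{\bullet,q}_b(f;\calA),d_h)$ is exactly the bounded cochain complex $C^\bullet_b(X_{\bullet,q};\calA_q)$, where $\calA_q=\rho_q^*\calA$; for $q=0$ it is literally $C^\bullet_b(X;\calA)$, since $X_{\bullet,0}=X$ and $\rho_0=\id$. Hence $\firstE_1^{q,p}\cong H^p_b(X_{\bullet,q};\calA_q)$ as seminormed $R$-modules, and $\firstd_1^{q,p}\colon\firstE_1^{q,p}\to\firstE_1^{q+1,p}$ is, up to the global sign $(-1)^p$ occurring in the formula for $d_v$, the alternating sum $\sum_{j=0}^{q+1}(-1)^j(\delta_j)^*$ of the maps induced on bounded cohomology by the $q+2$ coface maps $\delta_j\colon[q]\to[q+1]$ (the $j=0$ term carrying the coefficient twist that is part of the corresponding $\calL^*$-morphism).

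The key step is to evaluate $\firstd_1$. By the discussion preceding the lemma, each $\delta_j$ induces a homotopy equivalence $(X_{\bullet,q+1};\calA_{q+1})\to(X_{\bullet,q};\calA_q)$ in $\calL^*$, fitting into a triangle with the retractions $\rho_q,\rho_{q+1}$ to $(X;\calA)$ that commutes up to canonical homotopy, and moreover any two maps $[q]\to[q+1]$ are canonically homotopic. By Corollary~\ref{cor:homotopic:map:same:map:in:bc}, each $(\delta_j)^*$ is an isometric isomorphism, and after identifying every $\firstE_1^{q,p}$ with $H^p_b(X;\calA)$ via $(\rho_q)^*$ all of them become the identity. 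Therefore $\firstd_1^{q,p}$ is $\bigl(\sum_{j=0}^{q+1}(-1)^j\bigr)\cdot\id_{H^p_b(X;\calA)}$, i.e.\ $0$ for even $q$ and $\pm\id$ for odd $q$. Passing to $d_1$-cohomology gives $\firstE_2^{q,p}\cong H^p_b(X;\calA)$ for $q=0$ and $\firstE_2^{q,p}=0$ for $q>0$; for $q=0$ this is an isometric isomorphism, because there $\firstE_2^{0,p}=\firstE_1^{0,p}$ (the incoming $d_1$ emanates from the zero group and the outgoing $d_1$ vanishes) and $\firstE_1^{0,p}=H^p_b(X;\calA)$ on the nose.

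Finally, since $\firstE_2^\bb$ is concentrated in $q=0$ and $C^\bb_b(f;\calA)$ is a first-quadrant double complex, the spectral sequence converges and collapses at the second page, so the edge homomorphism yields $H^n(\Tot(C^\bb_b(f;\calA)))\cong\firstE_2^{0,n}=H^n_b(X;\calA)$ as $R$-modules. I expect the only point needing care to be the coherence of the bookkeeping in the previous paragraph — that the identifications $\firstE_1^{q,p}\cong H^p_b(X;\calA)$ are mutually compatible and that every coface genuinely acts as the identity — but this is taken care of by the fact, established before the lemma, that all simplicial operators $[q]\to[q']$ act by homotopic maps on $X_{\bullet,\bullet}$, together with the $\calL^*$-homotopy invariance of bounded cohomology (Corollary~\ref{cor:homotopic:map:same:map:in:bc}).
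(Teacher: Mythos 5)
Your proposal is correct and follows essentially the same route as the paper's proof: identify $\firstE_1^{q,p}$ with $H^p_b(X_{\bullet,q};\calA_q)$ via the interpretation~\eqref{eqn:X_bulletbullet}, use the canonical homotopies among the coface maps $[q]\to[q+1]$ together with Corollary~\ref{cor:homotopic:map:same:map:in:bc} to see that after identifying everything with $H^p_b(X;\calA)$ the differential $\firstd_1$ is the alternating sum $\sum_{j=0}^{q+1}(-1)^j\cdot\id$, hence zero for even $q$ and an isomorphism for odd $q$, and conclude by collapse of the first-quadrant spectral sequence. Your extra remarks (the explicit sign bookkeeping and the observation that the $q=0$ identification is isometric) are consistent refinements, not deviations.
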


Second, we examine the spectral sequence~$(\secondE^\bb_r)_r$.
In order to obtain a useful identification of the second page, we assume that~$f$ is a Kan fibration and that the collection of bounded cochain complexes of the fibers satisfies the bounded boundary condition (Definition~\ref{defn:Phi iso}).

\begin{lemma}
\label{lem:secondE}
    Let~$f\colon X\to Y$ be a Kan fibration of simplicial sets and let~$\calA$ be a seminormed local coefficient system on~$X$.
    Let~$p,q\in \N$.
    If the collections  $(C^\bullet_b(F_\tau;\calA_\tau))_{\tau\in (Y^{\Delta^p})_0}$
    and $(C^\bullet_b(F_\tau;\calA_\tau))_{\tau\in (Y^{\Delta^{p+1}})_0}$
    satisfy~$\BBC^q$, then we have an isomorphism of seminormed $R$-modules
    \begin{equation}
    \label{eqn:secondE identif}
        \secondE^{p,q}_2\cong H^p_b(Y;\calH^q_b(F;\calA)).
    \end{equation}
    In particular, we have an isomorphism~\eqref{eqn:secondE identif} in each of the following cases:
    \begin{enumerate}[label=\enum]
        \item $Y$ has only finitely many $n$-simplices in every dimension~$n\le p+1$;
        \item $q=0$;
        \item $Y$ is connected and for some 0-simplex~$y\in Y_0$ the seminormed $R$-cochain complex~$C^\bullet_b(F_y;\calA_y)$ satisfies~$\UBC^q$.
    \end{enumerate}
    \begin{proof}
        Under the identification of~$S_{\bullet,\bullet}f$ as in~\eqref{eqn:firstqthenp}, the first page takes the form
        \[
			\secondE^{p,q}_1 = H^q\Bigl(\bprod{\tau\in (Y^{\Delta^p})_0} C^\bullet_b(F_\tau;\calA_\tau)\Bigr).
		\]
        Since the collection~$(C^\bullet_b(F_\tau,\calA_\tau))_{\tau\in (Y^{\Delta^p})_0}$ satisfies~$\BBC^q$ by assumption, 
        we obtain (as seminormed $R$-modules)
        \[
            \secondE^{p,q}_1\cong \bprod{\tau\in (Y^{\Delta^p})_0}H^q_b(F_\tau;\calA_\tau).
        \]
        Moreover, since~$f$ is a Kan fibration, the map $(F_{\tau};\calA_{\tau})\to (F_{\tau_0};\calA_{\tau_0})$ induced by $\tau_0 \to \tau$ is a homotopy equivalence in~$\calL^*$ (Proposition~~\ref{prop:all:fibers:are:Kan:and:we}). 
        Hence we have (as seminormed $R$-modules)
        \[
			\secondE^{p,q}_1\cong \bprod{\tau\in (Y^{\Delta^p})_0}H^q_b(F_{\tau_0};\calA_{\tau_0})
			=C^p_b(Y;\calH^q_b(F;\calA))
		\]
        and similarly for~$\secondE_1^{p+1,q}$.
        By inspection, the differential $\secondd_1^{p,q}\colon \secondE_1^{p,q}\to \secondE_1^{p+1,q}$ agrees with the $p$-th differential of~$C^\bullet_b(Y;\calH^q_b(F;\calA))$.
        We have a commutative diagram
        \[\begin{tikzcd}
            \secondE_1^{p-1,q}\ar{r}{\secondd_1^{p-1,q}}\ar[two heads]{d}
            & \secondE_1^{p,q}\ar{r}{\secondd_1^{p,q}}\ar{d}{\cong}
            & \secondE_1^{p+1,q}\ar{d}{\cong}
            \\
            C_b^{p-1}(Y;\calH_b^q(F;\calA))\ar{r}{d^{p-1}}
            & C_b^p(Y;\calH_b^q(F;\calA))\ar{r}{d^p}
            & C_b^{p+1}(Y;\calH_b^q(F;\calA))
        \end{tikzcd}\]
		Thus the second page can be identified as claimed (as seminormed $R$-modules):
		\[
			\secondE^{p,q}_2\cong H^p_b(Y;\calH^q_b(F;\calA)).
		\]        
        It remains to show that the collection  $(C^\bullet_b(F_\tau;\calA_\tau))_{\tau\in (Y^{\Delta^k})_0}$
        for~$k\in \{p,p+1\}$
        satisfies~$\BBC^q$ in each of the cases~(i), (ii), and~(iii).
        In case~(i), this holds because finite collections satisfy~$\BBC^q$ (Example~\ref{ex:finite collections}).
        In case~(ii), each seminormed $R$-cochain complex~$C^\bullet_b(F_\tau;\calA_\tau)$ clearly satisfies 0-$\UBC^0$.
        Hence the collection~$(C^\bullet_b(F_\tau;\calA_\tau))_{\tau\in (Y^{\Delta^k})_0}$ satisfies~$\BBC^0$ by Proposition~\ref{prop:bounded products}.
        In case~(iii), since~$f$ is a Kan fibration and~$Y$ is connected, $(F_y;\calA_y)$ is homotopy equivalent to~$(F_\tau;\calA_\tau)$ in~$\calL^*$ for every~$\tau\in (Y^{\Delta^k})_0$ (Proposition~\ref{prop:all:fibers:are:Kan:and:we}).
        Since the seminormed $R$-cochain complex~$C^\bullet_b(F_y;\calA_y)$ satisfies~$\UBC^q$, the collection~$(C^\bullet_b(F_\tau;\calA_\tau))_{\tau\in (Y^{\Delta^k})_0}$ satisfies~$\BBC^q$ by Corollary~\ref{cor:weak invariance UBC}.
    \end{proof}
\end{lemma}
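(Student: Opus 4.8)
The plan is to extract the second page of the spectral sequence $(\secondE^\bb_r)_r$, which is obtained from the double complex $C^\bb_b(f;\calA)$ by first taking vertical cohomology and then horizontal cohomology, so that $\secondE^{p,q}_1 = H^q(C^{p,\bullet}_b(f;\calA), d_v)$. I would begin from the identification \eqref{eqn:firstqthenp} of $S_\bb f$, regarded as a simplicial object in simplicial sets, as the functor $[p]\mapsto \coprod_{\tau\in (Y^{\Delta^p})_0}(F_\tau)_\bullet$. Since bounded cochains of a coproduct of simplicial sets are (isometrically) the bounded product of the bounded cochain complexes of the summands, and the vertical differential is induced by the fiberwise simplicial structure, this gives an isometric identification
\[
	\secondE^{p,q}_1 \cong H^q\Bigl(\bprod{\tau\in (Y^{\Delta^p})_0}C^\bullet_b(F_\tau;\calA_\tau)\Bigr),
\]
and likewise with $p$ replaced by $p+1$; here one uses $(Y^{\Delta^p})_0 \cong Y_p$.

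Next I would bring the cohomology inside the bounded product. The hypothesis that $(C^\bullet_b(F_\tau;\calA_\tau))_{\tau\in (Y^{\Delta^p})_0}$ satisfies $\BBC^q$ is exactly the statement that the comparison map $\Phi^q$ of \eqref{comparison-map-Phi} is an (isometric) isomorphism, so $\secondE^{p,q}_1 \cong \bprod{\tau\in (Y^{\Delta^p})_0}H^q_b(F_\tau;\calA_\tau)$. Because $f$ is a Kan fibration, Proposition~\ref{prop:all:fibers:are:Kan:and:we} provides, for the morphism $\tau_0\to\tau$ in $\Delta\shortdownarrow Y$ corresponding to a vertex $\tau_0$ of $\tau$, a homotopy equivalence $(F_\tau;\calA_\tau)\to(F_{\tau_0};\calA_{\tau_0})$ in $\calL^*$, hence by Corollary~\ref{cor:homotopic:map:same:map:in:bc} an isometric isomorphism $H^q_b(F_\tau;\calA_\tau)\cong H^q_b(F_{\tau_0};\calA_{\tau_0})=\calH^q_b(F;\calA)(\tau_0)$. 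Assembling these identifications over all $\tau\in Y_p$ yields $\secondE^{p,q}_1 \cong C^p_b(Y;\calH^q_b(F;\calA))$, the $p$-th bounded cochain module of $Y$ with coefficients in the local system \eqref{eqn:coefficient fibres}.

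It then remains to match the differential $\secondd_1^{p,q}\colon\secondE^{p,q}_1\to\secondE^{p+1,q}_1$ with the $p$-th coboundary of $C^\bullet_b(Y;\calH^q_b(F;\calA))$. I would do this by chasing the definition of the horizontal differential $d_h$ through the identifications above: the contributions of the $i$-th face maps restrict on fibers to the structure maps $F(\alpha)$ of Section~\ref{sec:coefficient fibres}, which by construction induce exactly the parallel-transport isomorphisms that $\calH^q_b(F;\calA)$ assigns to morphisms of $\pi(Y)$, while the leading term carries the twisting factor $\calA([\cdot]^{-1})$ that appears in the coboundary of $C^\bullet_b(Y;-)$; the signs agree by the sign convention in the double complex. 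Fitting these into a commutative ladder between $(\secondE^{\bullet,q}_1,\secondd_1)$ and $(C^\bullet_b(Y;\calH^q_b(F;\calA)),d)$ and passing to cohomology gives the asserted isometric isomorphism $\secondE^{p,q}_2\cong H^p_b(Y;\calH^q_b(F;\calA))$. I expect this bookkeeping step -- tracking the parallel transport and the signs through the various mapping-space adjunctions -- to be the main technical point; by contrast the conceptual input is entirely the $\BBC^q$ hypothesis, which is precisely what makes $H^q$ commute with the bounded product of the fiber cochain complexes.

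Finally, each of the three special cases reduces to verifying $\BBC^q$ for the collections indexed by $(Y^{\Delta^k})_0\cong Y_k$ with $k\in\{p,p+1\}$. In case (i) these index sets are finite, so the collections are finite and satisfy $\BBC^q$ by Example~\ref{ex:finite collections}. In case (ii) each $C^\bullet_b(F_\tau;\calA_\tau)$ trivially satisfies $0$-$\UBC^0$ (as $C^{-1}_b=0$), so $\BBC^0$ follows from Proposition~\ref{prop:bounded products}. In case (iii), connectedness of $Y$ together with Proposition~\ref{prop:all:fibers:are:Kan:and:we} shows that every $(F_\tau;\calA_\tau)$ is homotopy equivalent in $\calL^*$ to $(F_y;\calA_y)$ (pass to a vertex of $\tau$ and then along a path to $y$), whose bounded cochain complex satisfies $\UBC^q$ by hypothesis; hence Corollary~\ref{cor:weak invariance UBC} yields $\BBC^q$ for the collection.
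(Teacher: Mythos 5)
Your proposal is correct and follows essentially the same route as the paper: identify $\secondE_1^{p,q}$ via the coproduct decomposition \eqref{eqn:firstqthenp}, use the $\BBC^q$ hypothesis to commute $H^q$ with the bounded product, use Proposition~\ref{prop:all:fibers:are:Kan:and:we} to assemble the result into $C^p_b(Y;\calH^q_b(F;\calA))$, match the differential, and then verify $\BBC^q$ in each of the three cases exactly as the paper does. The only point to be careful about is the column $p-1$ of your ``commutative ladder'': the hypotheses give $\BBC^q$ only for the collections indexed by $(Y^{\Delta^p})_0$ and $(Y^{\Delta^{p+1}})_0$, so at $p-1$ the comparison map $\Phi^q$ is merely surjective rather than an isomorphism --- but since you only need to match the \emph{image} of $\secondd_1^{p-1,q}$ inside the (correctly identified) term at $p$, surjectivity there suffices, which is precisely how the paper's diagram (with its two-headed left vertical arrow) handles it.
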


\begin{proof}[Proof of Theorem~\ref{thm:sss}]  
	This now follows from Lemma~\ref{lem:firstE} and Lemma~\ref{lem:secondE}.
\end{proof}

\begin{rem}
    It is not strictly necessary to restrict ourselves to Kan fibrations in Lemma~\ref{lem:secondE} and Theorem~\ref{thm:sss}.
    In order to treat arbitrary simplicial maps, one has to allow for more general coefficient systems over the simplex category (Definition~\ref{defn:simplex category}).
    Given a simplicial set~$Y$ and a functor $\calB\colon \Delta\shortdownarrow Y\to \Mod^\sn_R$, one defines the bounded cohomology $H^*_b(Y;\calB)$ similarly.

    Then Lemma~\ref{lem:secondE} can be extended to arbitrary maps as follows. Let $f\colon X\to Y$ be a map of simplicial sets and let~$\calA$ be a seminormed local coefficient system on~$X$.
    Consider the functor
	\[
		\calH^q_b(F_\bullet;\calA_\bullet)\colon \Delta\shortdownarrow Y\to \Mod^\sn_R, \quad \tau\mapsto H^q_b(F_\tau;\calA_\tau).
	\]
	If the collections of seminormed $R$-cochain complexes~$(C^\bullet_b(F_\tau;\calA_\tau))_{\tau\in (Y^{\Delta^p})_0}$
    and $(C^\bullet_b(F_\tau;\calA_\tau))_{\tau\in (Y^{\Delta^{p+1}})_0}$
    satisfy~$\BBC^q$, then we can identify (as seminormed $R$-modules)
        \[
		\secondE^{p,q}_2 \cong H^p_b(Y;\calH^q_b(F_\bullet;\calA_\bullet)).
	\]
    Hence one obtains a corresponding version of Theorem~\ref{thm:sss} for arbitrary simplicial maps.
\end{rem}

\begin{rem}\label{rem:replacement:fibration}
An alternative way to extend the spectral sequence to arbitrary maps is by considering homotopy fibers. Given a simplicial map $f \colon X \to Y$ (resp.\ between Kan complexes), it is possible to replace~$f$ by a Kan fibration, i.e., $f = p \, \circ \, i$, where $i \colon X \to Z$ is a weak equivalence (resp.\ between Kan complexes) and $p \colon Z \to Y$ a Kan fibration. 
The fiber(s) of~$p$ is the homotopy fiber(s) of~$f$.
Then one can study the Serre spectral sequence in bounded cohomology associated to $p \colon Z \to Y$. Assuming that $X$ and~$Y$ are Kan complexes and given a seminormed local coefficient system~$\calA$ on $Z$, then by homotopy invariance of bounded cohomology, it follows that we have an isometric isomorphism $H_b^*(Z; \calA) \cong H_b^*(X; i^*\calA)$. 
\end{rem}

As a special case, for trivially seminormed local coefficients, Theorem~\ref{thm:sss} recovers the Serre spectral sequence in ordinary cohomology.
Recall from Section~\ref{sec:bc} that for a seminormed local coefficient system~$\calA$, we denote by~$\calA_{\un}$ the underlying local coefficient system (after forgetting the seminorm) and by~$\calA_{\tr}$ the corresponding seminormed local coefficient system equipped with the trivial seminorm.

\begin{example}
\label{ex:ordinary ss}
	Let~$f\colon X\to Y$ be a Kan fibration of simplicial sets. Let~$\calA$ be a seminormed local coefficient system on~$X$.
	Then, for every~$p\in \N$ and every $p$-simplex~$\tau\colon \Delta^p\to Y$, the bounded cochain complex $C^\bullet_b(F_\tau;(\calA_{\tr})_\tau)$ coincides with the ordinary cochain complex $C^\bullet(F_\tau;(\calA_{\un})_\tau)$, which clearly satisfies the uniform boundary condition in all degrees.
    Then the collection $(C^\bullet_b(F_\tau;(\calA_{\tr})_\tau))_{\tau\in (Y^{\Delta^p })_0}$ satisfies the bounded boundary condition in all degrees.
	Hence, using Lemma~\ref{lem:secondE}, the spectral sequence in Theorem~\ref{thm:sss} takes the usual form
	\[
		H^p(Y;\calH^q(F;\calA_{\un})) \Rightarrow H^{p+q}(X;\calA_{\un}).
	\] 
\end{example}

\begin{rem}
\label{rem:non-BBC}
The bounded boundary condition is required in general for the identification of the $E_2$-page in Theorem \ref{thm:sss} (also for $p=0$). For an elementary example, consider a collection of Kan complexes $(X_i)_{i \in I}$ such that the corresponding collection of bounded cochain complexes (with constant coefficients in $\R$) does not satisfy $\BBC^q$ for some $q \geq 0$. The coproduct $\coprod_{i\in I} X_i$ defines a Kan fibration over the index set $I$ regarded as a discrete simplicial set. Then we have
$$H^*_b\bigl(\coprod_{i\in I} X_i ; \R\bigr) = H^*\bigl(\bprod{i\in I} C^{\bullet}_b(X_i; \R)\bigr)$$
and 
$$\secondE^{p, *}_1 \cong H^*\bigl(\bprod{i\in I} C^{\bullet}_b(X_i; \R)\bigr)$$
for every $p \geq 0$. Moreover, the differential $\secondd_1^{p,q}$ is the identity if $p$ is odd and zero if $p$ is even. Thus, the spectral sequence collapses at the $E_2$-page, but its term $E^{0,q}_2$ is not identified in this case with $\bprod{i\in I} H^q_b(X_i; \R)$.  
\end{rem}

\subsection{Functoriality} We discuss the functoriality of the Serre spectral sequence in bounded cohomology (Theorem~\ref{thm:sss}).
Consider a commutative square in~$\SSet$
\[\begin{tikzcd}
    X\ar{r}{g}\ar{d}{f} & X'\ar{d}{f'} \\
    Y\ar{r}{h} & Y'
\end{tikzcd}\]
where~$f$ and~$f'$ are Kan fibrations, together with a map $(g,\theta)\colon (X;\calA)\to (X';\calA')$ in~$\calL^*$.
This induces a natural map of double complexes 
$$C^\bb_b(g, \theta, h) \colon C^\bb_b(f';\calA')\to C^\bb_b(f;\calA)$$ 
and hence a map of spectral sequences.

In particular, a specific change of coefficients induces a map of Serre spectral sequences from bounded cohomology to ordinary cohomology:

\begin{prop}[Comparison map of spectral sequences]
\label{prop:comparison ss}
	Let~$f\colon X\to Y$ be a Kan fibration of simplicial sets and let~$\calA$ be a seminormed local coefficient system on~$X$.
	Let~$(E^{\bullet,\bullet}_r)_r$ be the Serre spectral sequence in bounded cohomology (Theorem~\ref{thm:sss}) and let~$(D^{\bullet,\bullet}_r)_r$ be the Serre spectral sequence in ordinary cohomology (Example~\ref{ex:ordinary ss}).
	Then there is a map of spectral sequences $(c_r\colon E^{\bullet,\bullet}_r\to D^{\bullet,\bullet}_r)_r$ such that
	\[\begin{tikzcd}
		E^{p,q}_r\ar[Rightarrow]{r}\ar{d}{c_r} & H^{p+q}_b(X;\calA)\ar{d}{\comp} \\
		D^{p,q}_r\ar[Rightarrow]{r} & H^{p+q}(X;\calA_{\un}).
	\end{tikzcd}\]
	Moreover, for all~$p,q\in \N$, we have a factorization
	\[\begin{tikzcd}
		E^{p,q}_2\ar{rr}{c_2}\ar{dr} && D^{p,q}_2\cong H^p(Y;\calH^q(F;\calA_{\un})) \\
		& H^p_b(Y;\calH^q_b(F;\calA))\ar{ur}	
	\end{tikzcd}\] 
	\begin{proof}
        We consider the square in~$\SSet$
        \[\begin{tikzcd}
            X\ar{r}{\id_X}\ar{d}{f} & X\ar{d}{f} \\
            Y\ar{r}{\id_Y} & Y
        \end{tikzcd}\]
        together with the map $(\id_X,\eta(\calA))\colon (X;\calA_{\tr})\to (X;\calA)$, where~$\eta$ is the unit map from~\eqref{eqn:unit}.
        This induces the desired map~$c$ of spectral sequences.
		Naturality of the map~$\Phi$ from~\eqref{comparison-map-Phi} yields a commutative square of $R$-modules
		\[\begin{tikzcd}
			E^{p,q}_1\cong H^q(\prod^b_{\tau\in (Y^{\Delta^p})_0}C^{\bullet}_b(F_\tau;\calA_\tau)) \ar{r}{c_1}\ar{d}[swap]{\Phi}
			& D^{p,q}_1\cong H^q(\prod_{\tau\in (Y^{\Delta^p})_0}C^{\bullet}(F_\tau;(\calA_{\un})_\tau)) \ar{d}{\cong}[swap]{\Phi} \\
			\prod^b_{\tau\in (Y^{\Delta^p})_0}H^q(C^{\bullet}_b(F_\tau;\calA_\tau)) \ar{r} 
			& \prod_{\tau\in (Y^{\Delta^p})_0}H^q(C^{\bullet}(F_\tau;(\calA_{\un})_\tau)) 
			\end{tikzcd}\]
		which is natural in $p$ and the right vertical map is an isomorphism.
		Taking cohomology~$H^p$ in the $p$-direction yields the desired factorization of~$c_2$.
	\end{proof}
\end{prop}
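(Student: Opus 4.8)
The plan is to derive everything from the functoriality of the double complex construction discussed above, applied to a single well-chosen morphism of $\calL^*$, and then to trace the resulting map through the identifications established in Lemma~\ref{lem:firstE} and Lemma~\ref{lem:secondE}; recall that $(E^{\bullet,\bullet}_r)_r$ and $(D^{\bullet,\bullet}_r)_r$ are the spectral sequences $(\secondE^\bb_r)_r$ of the double complexes $C^\bb_b(f;\calA)$ and $C^\bb(f;\calA_{\un})$, respectively. First I would take the identity square on $f\colon X\to Y$ together with the morphism $(\id_X,\eta(\calA))\colon (X;\calA_{\tr})\to (X;\calA)$ in $\calL^*$, where $\eta$ is the unit of the adjunction $(u,\tr)$ from~\eqref{eqn:unit}: the required natural transformation $\id_X^*\calA\Rightarrow\calA_{\tr}$ is precisely $\eta(\calA)$. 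Functoriality yields a map of double cochain complexes $C^\bb_b(f;\calA)\to C^\bb_b(f;\calA_{\tr})$, and since $C^\bb_b(f;\calA_{\tr})$ coincides with the ordinary double complex $C^\bb(f;\calA_{\un})$ (as in Example~\ref{ex:ordinary} and Example~\ref{ex:ordinary ss}), this is the levelwise inclusion of bounded cochains into all cochains. Passing to spectral sequences gives $c_r\colon E^{\bullet,\bullet}_r\to D^{\bullet,\bullet}_r$, and passing to total complexes gives a compatible map on the abutments. For the first square in the statement I would invoke Lemma~\ref{lem:firstE} for both double complexes: it identifies the cohomology of the total complexes with $H^*_b(X;\calA)$ and $H^*(X;\calA_{\un})$ via the edge maps of the respective $(\firstE^\bb_r)_r$, and since our map is an honest map of double complexes it commutes with these edge maps; under the identifications it carries $\comp$ to the map induced by the inclusion of bounded cochains.

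For the factorization of $c_2$ I would use the descriptions of the first pages from the proof of Lemma~\ref{lem:secondE}, namely $E^{p,q}_1\cong H^q\bigl(\bprod{\tau\in(Y^{\Delta^p})_0}C^\bullet_b(F_\tau;\calA_\tau)\bigr)$ and $D^{p,q}_1\cong H^q\bigl(\prod_{\tau\in(Y^{\Delta^p})_0}C^\bullet(F_\tau;(\calA_{\un})_\tau)\bigr)$, under which $c_1$ is induced by the inclusion of bounded cochains. Naturality of the map $\Phi$ from~\eqref{comparison-map-Phi} then gives a square comparing $c_1$ with the canonical maps onto $\bprod{\tau}H^q_b(F_\tau;\calA_\tau)\cong C^p_b(Y;\calH^q_b(F;\calA))$ and $\prod_{\tau}H^q(F_\tau;(\calA_{\un})_\tau)\cong C^p(Y;\calH^q(F;\calA_{\un}))$, in which the right-hand vertical arrow is an isomorphism because ordinary cohomology commutes with products. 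This square is natural in $p$; moreover $\bprod{\tau}H^q_b(F_\tau;\calA_\tau)$, regarded as a cochain complex in $p$ with the differential $\secondd_1$ computed in Lemma~\ref{lem:secondE}, is exactly $C^\bullet_b(Y;\calH^q_b(F;\calA))$, with cohomology $H^p_b(Y;\calH^q_b(F;\calA))$, and similarly on the ordinary side. Taking cohomology in the $p$-direction and rearranging then yields the stated factorization $E^{p,q}_2\to H^p_b(Y;\calH^q_b(F;\calA))\to D^{p,q}_2\cong H^p(Y;\calH^q(F;\calA_{\un}))$.

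I expect the only real work to be bookkeeping: checking that the identification of the abutment in Lemma~\ref{lem:firstE}, the identification of the $E_1$-page and of the differential $\secondd_1$ in Lemma~\ref{lem:secondE}, and the map $\Phi$ are all natural in precisely the way needed, so that the relevant squares genuinely commute. No new analytic input enters; in particular the comparison map plays no role here beyond being the map induced by the canonical inclusion of bounded cochains.
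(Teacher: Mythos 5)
Your proposal is correct and follows essentially the same approach as the paper: both proofs use the identity square on $f$ together with the morphism $(\id_X,\eta(\calA))\colon (X;\calA_{\tr})\to (X;\calA)$ to induce the map of double complexes (and hence of spectral sequences), and both derive the factorization of $c_2$ from naturality of the comparison map $\Phi$ and the fact that ordinary cohomology commutes with products. The only difference is that you spell out the compatibility with the abutments via Lemma~\ref{lem:firstE} more explicitly, which the paper leaves implicit.
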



\section{Serre spectral sequence in \texorpdfstring{$\ell^1$}{l\textonesuperior}-homology}\label{sec:l1-homology}

There is a corresponding Serre spectral sequence in $\ell^1$-homology.
We keep the exposition brief and omit the proofs, which are analogous (dual) to the case of bounded cohomology.

\begin{defi}[$\ell^1$-direct sum]
    Let~$(C^i)_{i\in I}$ be a collection of seminormed $R$-modules.
    The \emph{$\ell^1$-direct sum~$\lsum{i\in I}C^i$} is the $R$-submodule of~$\prod_{i\in I}C^i$ consisting of the elements~$(c_i)_{i\in I}$ such that $c_i=0$ for all but countably many~$i\in I$ and
    \[
        \sum_{i\in I}\|c_i\|_{C^i}<+\infty.
    \]
    The $R$-module~$\lsum{i\in I}C^i$ is equipped with the $\ell^1$-seminorm.
\end{defi}

\begin{defi}[$\ell^1$-homology with local coefficients]\label{rem:def:ell1:local:coeff}
Let $X$ be a simplicial set and let $\calA$ be a seminormed local coefficient system on $X$.
The \emph{$\ell^1$-chain complex $C_\bullet^{\ell^1}(X; \calA)$ of $X$ with local coefficients in $\calA$} consists of the seminormed $R$-modules
\[
C_n^{\ell^1}(X; \mathcal{A}) \coloneqq \lsum{\sigma \in X_n} \mathcal{A}(\sigma_0) \cdot \sigma,
\]
together with the boundary operators
\[
    \partial_n \colon C_n^{\ell^1}(X; \calA) \to C_{n-1}^{\ell^1}(X; \mathcal{A})
\]
given coordinate-wise by
\[
    \partial_n(a \cdot \sigma)= \mathcal{A}(\sigma_{[0, 1]})(a) \cdot \partial_0 \sigma + \sum_{i = 1}^n (-1)^i \cdot a \cdot \partial_i \sigma.
\]
The \emph{$\ell^1$-homology of $X$ with local coefficients in $\calA$} is then defined by 
\[
H^{\ell^1}_*(X; \calA) \coloneqq H_*(C_\bullet^{\ell^1}(X; \calA), \partial_\bullet).
\]
\end{defi}

We define an associated (covariant) category~$\mathcal{L}$ as follows: 
\begin{itemize}
\item the objects are pairs~$(X;\calA)$, where~$X$ is a simplicial set and~$\calA$ is a seminormed local coefficient system on~$X$;
\item a morphism $(X;\calA) \to (Y; \calB)$ is a pair~$(f, \theta)$, where $f \colon X \to Y$
is a map of simplicial sets and $\theta \colon \calA \Rightarrow f^*\calB$ is a natural transformation of seminormed local coefficient systems on $X$.
\end{itemize}
Then $\ell^1$-homology defines a functor from~$\calL$ to the category of graded seminormed $R$-modules:
\[
H_*^{\ell^1} \colon \calL \to \mathrm{gr}\Mod^\sn_R.
\]
A homotopy of maps in~$\calL$ is defined analogously to a homotopy in~$\calL^*$ and $\ell^1$-homology is homotopy invariant:

\begin{lemma}[cf.~ Lemma~\ref{lemma:simpl:hom:maps:induce:bounded:chain:} and Corollary~\ref{cor:homotopic:map:same:map:in:bc}]
    Let~$(f,\theta),(f',\theta')\colon (X;\calA)\to (Y;\calB)$ be maps in~$\calL$. 
    If there exists a homotopy from~$(f,\theta)$ to~$(f',\theta')$, then the chain maps $C_\bullet^{\ell^1}(f, \theta)$ and $C_\bullet^{\ell^1}(f', \theta')$ are chain homotopic via a chain homotopy whose norm in degree~$n$ is bounded by $n+1$ for every~$n\ge 0$.

    In particular, two homotopic maps in~$\calL$ induce the same map in $\ell^1$-homology and homotopy equivalences in $\calL$ induce isometric isomorphisms in $\ell^1$-homology.
\end{lemma}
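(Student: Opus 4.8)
The plan is to dualise the construction in Lemma~\ref{lemma:simpl:hom:maps:induce:bounded:chain:} from bounded cochains to $\ell^1$-chains. Let $(G,\Theta)\colon (X\times\Delta^1;\calA\times\Delta^1)\to (Y;\calB)$ be a homotopy in $\calL$ from $(f,\theta)$ to $(f',\theta')$, so that, by definition, $(f,\theta)=(G,\Theta)\circ(i_0,\id)$ and $(f',\theta')=(G,\Theta)\circ(i_1,\id)$. As in the cochain case, fix the standard triangulation of the prism $\Delta^n\times\Delta^1$ into the $n+1$ non-degenerate $(n+1)$-simplices $p_j\colon\Delta^{n+1}\to\Delta^n\times\Delta^1$, $j=0,\dots,n$ — the maps of posets $[n+1]\to[n]\times[1]$ from $(0,0)$ to $(n,1)$ that move $n$ steps to the right and once up. I would first record the ``abstract'' prism operator
\[
P_n\colon C^{\ell^1}_n(X;\calA)\to C^{\ell^1}_{n+1}(X\times\Delta^1;\calA\times\Delta^1),\qquad
P_n(a\cdot\sigma)\coloneqq\sum_{j=0}^{n}(-1)^j\, a\cdot\bigl((\sigma\times\id_{\Delta^1})\circ p_j\bigr),
\]
extended $\ell^1$-linearly; here $a\in\calA(\sigma_0)$ lies in the correct module because the $0$-vertex $(\sigma\times\id_{\Delta^1})(p_j(0))=(\sigma_0,0)$ projects to $\sigma_0$ and $\calA\times\Delta^1$ is by definition the pullback of $\calA$ along the projection $X\times\Delta^1\to X$, so no coefficient transport is needed. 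The desired chain homotopy is then $H_n\coloneqq C^{\ell^1}_{n+1}(G,\Theta)\circ P_n$, that is,
\[
H_n(a\cdot\sigma)=\sum_{j=0}^{n}(-1)^j\,\Theta_{(\sigma_0,0)}(a)\cdot\bigl(G\circ(\sigma\times\id_{\Delta^1})\circ p_j\bigr).
\]

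Next I would verify the chain homotopy identity $\partial_{n+1}\circ P_n+P_{n-1}\circ\partial_n=(i_1)_*-(i_0)_*$ (with the usual sign convention) on $C^{\ell^1}_\bullet(X;\calA)$. This is the classical prism computation: among the faces of each $p_j$, the two extreme faces reassemble into the bottom and top of the prism, producing the $(i_0)_*$- and $(i_1)_*$-terms, while the remaining inner faces cancel in pairs, both between consecutive $p_j$'s and against the terms of $P_{n-1}\circ\partial_n$. Composing with the chain map $C^{\ell^1}_\bullet(G,\Theta)$ then yields
\[
\partial_{n+1}\circ H_n+H_{n-1}\circ\partial_n=C^{\ell^1}_n(f',\theta')-C^{\ell^1}_n(f,\theta).
\]
The only point that goes beyond the constant-coefficient situation is the coefficient bookkeeping inside $P_n$ and $C^{\ell^1}_\bullet(G,\Theta)$; as in Lemma~\ref{lemma:simpl:hom:maps:induce:bounded:chain:} this is handled by the fact that $\Theta\colon\calA\times\Delta^1\Rightarrow G^*\calB$ is a natural transformation, hence commutes with all the relevant simplicial operators, and I expect this — rather than the prism combinatorics — to be the only delicate step.

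Finally, $P_n(a\cdot\sigma)$ is a sum of $n+1$ generators each with coefficient $a$, so $\|P_n\|\le n+1$, and $C^{\ell^1}_\bullet(G,\Theta)$ is norm non-increasing because $\Theta$ is; hence $\|H_n\|\le n+1$ in every degree~$n\ge 0$, as claimed. The chain homotopy identity immediately gives that homotopic maps in $\calL$ induce the same map on $\ell^1$-homology; chaining this equality along a zigzag of homotopies, a homotopy equivalence $(f,\theta)$ in $\calL$ induces an isomorphism $H^{\ell^1}_*(f,\theta)$ whose inverse is itself induced by a map in $\calL$ and hence norm non-increasing, so for every homology class $x$ one gets $\|x\|\ge\|H^{\ell^1}_*(f,\theta)(x)\|\ge\|x\|$; therefore $H^{\ell^1}_*(f,\theta)$ is an isometric isomorphism of graded seminormed $R$-modules.
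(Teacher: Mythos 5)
Your proof is correct and is precisely the dual argument the paper has in mind --- the paper explicitly omits the proof, noting that it is ``analogous (dual) to the case of bounded cohomology,'' and your prism operator $P_n$ composed with $C^{\ell^1}_{n+1}(G,\Theta)$ is exactly that dualization, with the norm bound $n+1$ (rather than $n$ as in the cochain case) arising correctly from the index shift: $H_n$ raises degree, so the relevant triangulation is that of $\Delta^n\times\Delta^1$ with its $n+1$ top simplices. The coefficient bookkeeping you flag is indeed where care is needed --- in particular, for the $0$-th face of $p_0$ the transport is along a vertical edge of the prism and hence the identity since $\calA\times\Delta^1=p^*\calA$, while for $p_j$ with $j\ge 1$ it is along $\sigma_{[0,1]}$, matching the transport in $P_{n-1}\circ\partial_n$; since you already attribute this to naturality of $\Theta$ and the definition of $\calA\times\Delta^1$, the argument is complete.
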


\begin{cor}[cf.~Corollary~\ref{cor:weak invariance bc}]
\label{prop:weak invariance l1}
Let $(f, \theta) \colon (X; \calA) \to (Y; \calB)$ be a map in~$\calL$ such that $f$ is a weak equivalence between Kan complexes and $\theta$ is a natural isomorphism. Then $(f,\theta)$ induces an isometric isomorphism in $\ell^1$-homology. 
\end{cor}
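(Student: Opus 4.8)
The plan is to deduce the statement from the $\ell^1$-analogue of Corollary~\ref{cor:simpl:homotop:equi:iff:weakequiv:and:iso}, exactly as Corollary~\ref{cor:weak invariance bc} was obtained in the bounded cohomology setting. The homotopy-invariance half is already in place: by the Lemma immediately preceding this corollary, a homotopy equivalence in $\calL$ induces an isometric isomorphism in $\ell^1$-homology. So the only point to supply is that, under the hypotheses, $(f,\theta)$ is a homotopy equivalence in $\calL$.

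First I would establish the $\calL$-version of Proposition~\ref{prop:characterisation:simpl:homotopy:eq}: if $f\colon X\to Y$ is a homotopy equivalence of Kan complexes and $\theta\colon \calA\Rightarrow f^*\calB$ is a natural isomorphism, then $(f,\theta)\colon (X;\calA)\to(Y;\calB)$ is a homotopy equivalence in $\calL$. This is the argument from the converse part of the proof of Proposition~\ref{prop:characterisation:simpl:homotopy:eq}, transported across the (cosmetic) reversal in the variance of the coefficient data. Concretely: pick a homotopy inverse $g\colon Y\to X$ and adjoint homotopies $H\colon \id_X\simeq g\circ f$, $H'\colon f\circ g\simeq \id_Y$ realizing an adjoint equivalence of $\infty$-groupoids, so that $\pi(f)$ and $\pi(g)$ form an adjoint equivalence of groupoids satisfying the triangle identities. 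Extend $g$ to $(g,\eta)\colon (Y;\calB)\to(X;\calA)$ by letting $\eta\colon \calB\Rightarrow g^*\calA$ be the natural isomorphism adjoint to $\theta$ for the induced adjoint pair $(f^*,g^*)$, i.e.\ such that $\calB\xRightarrow{\eta}g^*\calA\xRightarrow{g^*(\theta^{-1})}g^*f^*\calB$ is the isomorphism induced by $\pi(f)\circ\pi(g)\cong\id_{\pi(Y)}$. The triangle identities then identify the composite $\calA\xRightarrow{\theta}f^*\calB\xRightarrow{f^*(\eta)}f^*g^*\calA$ with the isomorphism induced by $\pi(g)\circ\pi(f)\cong\id_{\pi(X)}$, and $H$ (resp.\ $H'$) lifts to a homotopy in $\calL$ whose coefficient datum is the inverse of the one used in the $\calL^*$ case; this exhibits $(g,\eta)$ as a homotopy inverse of $(f,\theta)$ in $\calL$. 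Equivalently — and this is perhaps the cleanest phrasing — the assignment $(f,\theta)\mapsto (f,\theta^{-1})$ defines an isomorphism between the wide subcategory of $\calL$ on the morphisms with invertible coefficient datum and the corresponding subcategory of $\calL^*$, which carries homotopies to homotopies; hence a homotopy equivalence in $\calL$ is the same datum as a homotopy equivalence in $\calL^*$, and Corollary~\ref{cor:simpl:homotop:equi:iff:weakequiv:and:iso} applies verbatim. Since $X$ and $Y$ are Kan complexes, $f$ is a homotopy equivalence as soon as it is a weak equivalence~\cite[Theorem~I.1.10]{goerss2009simplicial}, so the hypotheses of the corollary put us precisely in this situation.

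Combining the two steps, $(f,\theta)$ is a homotopy equivalence in $\calL$ and therefore induces an isometric isomorphism in $\ell^1$-homology by the preceding Lemma. I do not expect a genuine obstacle here; the substantive content (adjoint homotopies realizing the triangle identities, and homotopy invariance of $\ell^1$-homology) has already been supplied, and what remains is bookkeeping of the reversed variance of the coefficient transformations. The one point that warrants a line of care is checking that the adjoint-equivalence argument of Proposition~\ref{prop:characterisation:simpl:homotopy:eq} used nothing about $\theta$ beyond its invertibility — which it did not — so that the same argument goes through after reversing arrows.
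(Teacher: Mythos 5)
Your proposal is correct and follows exactly the route the paper intends: the paper omits this proof as "analogous (dual)" to the bounded cohomology case, and you correctly identify the two ingredients needed (the $\ell^1$-homotopy-invariance lemma preceding the statement, together with the $\calL$-analogue of Corollary~\ref{cor:simpl:homotop:equi:iff:weakequiv:and:iso}). Your observation that $(f,\theta)\mapsto(f,\theta^{-1})$ gives an isomorphism of the relevant wide subcategories of $\calL$ and $\calL^*$ that preserves homotopies is a clean and accurate way to transport the result, and correctly confirms that nothing in the adjoint-equivalence argument of Proposition~\ref{prop:characterisation:simpl:homotopy:eq} depended on the direction of $\theta$ beyond its invertibility.
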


The interaction between homology and $\ell^1$-direct sums is crucial.

\begin{defi}[cf.~Definition~\ref{defn:Phi iso}]
    Let $((C^i_\bullet,\partial^i_\bullet))_{i\in I}$ be a uniform collection of seminormed $R$-chain complexes.
    The \emph{$\ell^1$-direct sum}~$\lsum{i\in I}C^i_\bullet$ is defined degreewise.
    We say that the uniform collection~$((C^i_\bullet,\partial^i_\bullet))_{i\in I}$ satisfies the \emph{bounded boundary condition in degree~$n$} ($\BBC_n$) if the canonical map of seminormed $R$-modules
        \[
        \Phi_n \colon H_n\Big(\lsum{i \in I} C_\bullet^i\Big)\to \lsum{i \in I} H_n(C_\bullet^i)
        \]
        is an isomorphism.
\end{defi}

The uniform boundary condition ($\UBC_n$) for chain complexes is completely analogous to Definition~\ref{defn:UBC}, obtained simply by re-indexing.

\begin{cor}[cf.~Corollary~\ref{cor:control_ubc_constant}]
    Let $(f, \theta) \colon (X; \calA) \to (Y; \calB)$ be a homotopy equivalence in~$\calL$, where~$Y$ is a Kan complex.
    Let~$n\in \N$ and let~$\kappa\in \R_{\ge 0}$.
        If~$C^{\ell^1}_\bullet(X;\calA)$ satisfies $\kappa$-$\UBC_n$, then~$C^{\ell^1}_\bullet(Y;\calB)$ satisfies $(\kappa+n+1)$-$\UBC_n$.
\end{cor}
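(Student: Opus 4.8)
The plan is to mirror the proof of Corollary~\ref{cor:control_ubc_constant}, dualizing from bounded cochains to $\ell^1$-chains. First I would choose a homotopy inverse $(g,\eta)\colon (Y;\calB)\to (X;\calA)$ of $(f,\theta)$ in $\calL$. Since $Y$ is a Kan complex, the $\calL$-analogue of Proposition~\ref{homotopy-simplification} lets me replace the zigzag of homotopies witnessing $(f,\theta)\circ(g,\eta)\simeq_{\calL}\id_{(Y;\calB)}$ by a single honest homotopy in~$\calL$. Applying the homotopy invariance lemma for $\ell^1$-homology above to this homotopy, I obtain a chain homotopy $H$ between $C_\bullet^{\ell^1}(f,\theta)\circ C_\bullet^{\ell^1}(g,\eta)=C_\bullet^{\ell^1}((f,\theta)\circ(g,\eta))$ and the identity $\id_{C_\bullet^{\ell^1}(Y;\calB)}$, with $\|H_n\|\le n+1$ in every degree~$n$.

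The remaining step is the standard estimate for $\UBC$-constants, i.e.\ the $\ell^1$-homology analogue of \cite[Proposition~A.3]{liloehmoraschini}. Let $b\in\im\bigl(\partial_{n+1}\colon C_{n+1}^{\ell^1}(Y;\calB)\to C_n^{\ell^1}(Y;\calB)\bigr)$, say $b=\partial_{n+1}(b')$. Since $C_\bullet^{\ell^1}(g,\eta)$ is a chain map of seminorm~$\le 1$, the element $C_n^{\ell^1}(g,\eta)(b)=\partial_{n+1}\bigl(C_{n+1}^{\ell^1}(g,\eta)(b')\bigr)$ again lies in $\im(\partial_{n+1})$, now inside $C_\bullet^{\ell^1}(X;\calA)$, and has norm $\le\|b\|$. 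By $\kappa$-$\UBC_n$ for $C_\bullet^{\ell^1}(X;\calA)$ there is $c\in C_{n+1}^{\ell^1}(X;\calA)$ with $\partial_{n+1}(c)=C_n^{\ell^1}(g,\eta)(b)$ and $\|c\|\le\kappa\,\|b\|$. Transporting $c$ back to $Y$ via $C_\bullet^{\ell^1}(f,\theta)$ and correcting by the term $H_n(b)$ coming from the chain homotopy, I set $\widetilde c\coloneqq C_{n+1}^{\ell^1}(f,\theta)(c)+H_n(b)$ (with the sign dictated by the convention of the homotopy lemma). Using that $\partial_n b=\partial_n\partial_{n+1}(b')=0$, so that the $H_{n-1}\partial_n$ summand of the chain-homotopy identity vanishes on~$b$, one checks $\partial_{n+1}(\widetilde c)=b$, while $\|\widetilde c\|\le\|c\|+\|H_n(b)\|\le\kappa\,\|b\|+(n+1)\,\|b\|=(\kappa+n+1)\,\|b\|$. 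This is precisely $(\kappa+n+1)$-$\UBC_n$ for $C_\bullet^{\ell^1}(Y;\calB)$.

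No step here is genuinely hard; the two points that require a little care are: (a) that ``homotopy in $\calL$'' straightens the same way as in $\calL^*$ when the target is a Kan complex, which follows since the proof of Proposition~\ref{homotopy-simplification} only uses \cite[Corollary~I.6.2]{goerss2009simplicial} together with the inversion of natural isomorphisms, and so carries over verbatim despite the reversed variance of the $\theta$-component in $\calL$; and (b) keeping the signs in the chain-homotopy identity consistent so that the correction term really does yield a genuine preimage of~$b$. I expect no real obstacle: the statement is a formal consequence of the $\ell^1$ homotopy invariance lemma together with the bookkeeping for $\UBC$-constants, exactly parallel to the cohomological case of Corollary~\ref{cor:control_ubc_constant}.
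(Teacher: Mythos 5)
Your proof is correct and matches the approach the paper intends: since the paper only cites the cohomological Corollary~\ref{cor:control_ubc_constant} and does not spell out a separate argument, the dualized reasoning you give (single homotopy via the $\calL$-analogue of Proposition~\ref{homotopy-simplification}, the chain homotopy with norm $\le n+1$ from the $\ell^1$ invariance lemma, and the standard $\UBC$-constant bookkeeping $\widetilde c = C_{n+1}^{\ell^1}(f,\theta)(c)+H_n(b)$ yielding $\|\widetilde c\|\le(\kappa+n+1)\|b\|$) is exactly what is being suppressed. Your remark that the straightening step carries over despite the reversed variance of the coefficient natural transformation is the right thing to check and is indeed unproblematic.
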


Using a version of Proposition~\ref{prop:bounded products} for chain complexes, we obtain:

\begin{cor}
[cf.~Corollary~\ref{cor:weak invariance UBC}]
Let~$((X_i;\calA_i))_{i\in I}$ be a collection of objects in~$\calL$, whose underlying simplicial sets are Kan complexes, that are pairwise  homotopy equivalent in $\calL$. 
 If for some~$i_0\in I$, the seminormed $R$-chain complex~$C_\bullet^{\ell^1}(X_{i_0};\calA_{i_0})$ satisfies~$\UBC_n$, 
then the uniform collection of seminormed $R$-chain complexes \linebreak $(C_\bullet^{\ell^1}(X_i;\calA_i))_{i\in I}$ satisfies~$\BBC_n$.
\end{cor}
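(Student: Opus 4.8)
The plan is to dualize the short proof of Corollary~\ref{cor:weak invariance UBC}, replacing bounded cochain complexes by $\ell^1$-chain complexes and bounded products by $\ell^1$-direct sums, and invoking the $\ell^1$-analogue of Corollary~\ref{cor:control_ubc_constant} and the chain-complex version of Proposition~\ref{prop:bounded products} recorded just above.

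First I would check that the collection $(C_\bullet^{\ell^1}(X_i;\calA_i))_{i\in I}$ is uniform --- the homological counterpart of the fact that every collection of bounded cochain complexes of simplicial sets is uniform. The $\ell^1$-boundary operator $\partial_n$ of Definition~\ref{rem:def:ell1:local:coeff} is an alternating sum of $n+1$ terms, each obtained from a face map together with an isometric coefficient isomorphism, so $\|\partial_n\|\le n+1$ irrespective of $(X_i;\calA_i)$; hence $\sup_{i\in I}\|\partial_n^i\|\le n+1<+\infty$ for every $n$.

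Next, I would fix $\kappa\in\R_{\ge 0}$ with the property that $C_\bullet^{\ell^1}(X_{i_0};\calA_{i_0})$ satisfies $\kappa$-$\UBC_n$. For each $i\in I$ there is, by assumption, a homotopy equivalence $(X_{i_0};\calA_{i_0})\to(X_i;\calA_i)$ in $\calL$ (any homotopy equivalence in $\calL$ admits a homotopy inverse, which is again such a homotopy equivalence, so both directions are available), and $X_i$ is a Kan complex. The $\ell^1$-version of Corollary~\ref{cor:control_ubc_constant} then shows that $C_\bullet^{\ell^1}(X_i;\calA_i)$ satisfies $(\kappa+n+1)$-$\UBC_n$. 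The key point is that the constant $\kappa+n+1$ depends only on $\kappa$ and $n$ and is independent of $i$; thus every member of the collection satisfies $(\kappa+n+1)$-$\UBC_n$ with one common constant.

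Finally, I would apply the chain-complex version of Proposition~\ref{prop:bounded products} to the uniform collection $(C_\bullet^{\ell^1}(X_i;\calA_i))_{i\in I}$, each member of which satisfies $\kappa'$-$\UBC_n$ for $\kappa'=\kappa+n+1$; this yields $\BBC_n$, i.e., the canonical map $\Phi_n$ is an isomorphism. The only step requiring care --- and the closest thing to an obstacle --- is confirming that the $\ell^1$-dualizations cited above genuinely hold with the asserted constants; but this is routine, since $\ell^1$-direct sums play exactly the role of bounded products in the computations of \cite[Appendix~A]{liloehmoraschini}, and the chain homotopy produced by the $\ell^1$-analogue of Lemma~\ref{lemma:simpl:hom:maps:induce:bounded:chain:} has norm $\le n+1$ in degree $n$, so the $\UBC$-constant bookkeeping transfers verbatim.
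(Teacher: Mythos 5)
Your proposal is correct and follows exactly the route the paper intends (the paper omits the proof, stating only that it is "analogous (dual)" to Corollary~\ref{cor:weak invariance UBC}, whose proof invokes Corollary~\ref{cor:control_ubc_constant} together with Proposition~\ref{prop:bounded products}). You correctly identify the three ingredients: uniformity of the collection (with the bound $\|\partial_n\|\le n+1$), the $\ell^1$-version of Corollary~\ref{cor:control_ubc_constant} producing a uniform $\UBC_n$-constant $\kappa+n+1$ independent of $i$, and the chain-complex version of Proposition~\ref{prop:bounded products}.
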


Let~$f\colon X\to Y$ be a Kan fibration of simplicial sets
    and let~$\calA$ be a seminormed local coefficient system on~$X$.
    For a 0-simplex~$y\in Y_0$ we denote by~$\iota_y\colon F_y \hookrightarrow X$ the fiber inclusion and by~$\calA_y\coloneqq \iota_y^*\calA$ the restricted coefficient system on~$F_y$.
    We obtain a seminormed local coefficient system on~$Y$ for every~$q\ge 0$:
	\[
		\calH_q^{\ell^1}(F;\calA)\colon \pi(Y)\to \Mod^\sn_R, \quad y\mapsto H_q^{\ell^1}(F_y;\calA_y).
	\]	

\begin{thm}[Serre spectral sequence in $\ell^1$-homology]
\label{thm:sss:l1}
	Let~$f\colon X\to Y$ be a Kan fibration of simplicial sets and let~$\calA$ be a seminormed local coefficient system on~$X$.
	Then there is a first-quadrant spectral sequence~$(E_{\bullet,\bullet}^r)_r$ converging to $H_*^{\ell^1}(X;\calA)$.
    We have an isomorphism of seminormed $R$-modules
    \[
        E^2_{p,q}\cong H^{\ell^1}_p(Y;\calH^{\ell^1}_q(F;\calA))
    \]
    in each of the following cases:
    \begin{enumerate}[label=\enum]
        \item $Y$ has only finitely many $n$-simplices in every dimension $n \leq p$;
        \item $Y$ is connected and for some 0-simplex~$y\in Y_0$ the seminormed $R$-chain complex~$C^{\ell^1}_\bullet(F_y;\calA_y)$ satisfies~$\UBC_q$.
    \end{enumerate}
\end{thm}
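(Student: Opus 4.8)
The plan is to run the dual of the argument that proves Theorem~\ref{thm:sss}, systematically replacing cochain complexes by chain complexes and bounded products by $\ell^1$-direct sums. Starting from the bisimplicial set $S_{\bullet,\bullet}f$ of $f$ in the sense of Dress~\cite{dress}, one forms the \emph{double chain complex} $(C_{\bullet,\bullet}^{\ell^1}(f;\calA),\partial_h,\partial_v)$ whose module in bidegree $(p,q)$ is the $\ell^1$-direct sum $\lsum{(\sigma,\tau)\in S_{p,q}f}\calA(\sigma(0,0))\cdot(\sigma,\tau)$, with horizontal and vertical differentials induced by the corresponding simplicial operators of $S_{\bullet,\bullet}f$, twisted by the coefficient system along the leading edges and equipped with the usual signs (exactly as in Section~\ref{sec:sss}). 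Since the collection of $\ell^1$-chain complexes of simplicial sets is uniform, the total chain complex $\Tot(C_{\bullet,\bullet}^{\ell^1}(f;\calA))$ is well defined; and since the double complex is concentrated in the first quadrant, its two filtrations give two spectral sequences, both converging to $H_*(\Tot(C_{\bullet,\bullet}^{\ell^1}(f;\calA)))$.

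First I would study the spectral sequence obtained by filtering in the $q$-direction; this identifies the total homology and, as in Lemma~\ref{lem:firstE}, does \emph{not} use that $f$ is a Kan fibration. Using the identification of $S_{\bullet,\bullet}f$ with the functor $[q]\mapsto X_{\bullet,q}$ from \eqref{eqn:X_bulletbullet}, together with the retractions $\rho_q\colon X_{\bullet,q}\to X$, which are homotopy equivalences in $\calL$ whose structure homotopies are constant on the local coefficients, homotopy invariance of $\ell^1$-homology (the $\ell^1$-analogue of Corollary~\ref{cor:homotopic:map:same:map:in:bc}, established in Section~\ref{sec:l1-homology}) identifies the relevant $E^1$-term with $H_*^{\ell^1}(X;\calA)$ in a single column and with $0$ elsewhere, the connecting differential being alternately the identity and zero. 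Hence this spectral sequence collapses and $H_*(\Tot(C_{\bullet,\bullet}^{\ell^1}(f;\calA)))\cong H_*^{\ell^1}(X;\calA)$.

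Next I would study the spectral sequence obtained by filtering in the $p$-direction. Using the identification of $S_{\bullet,\bullet}f$ with the functor $[p]\mapsto \coprod_{\tau\in(Y^{\Delta^p})_0}(F_\tau)_\bullet$ from \eqref{eqn:firstqthenp}, its $E^1$-term in bidegree $(p,q)$ is $H_q\bigl(\lsum{\tau\in(Y^{\Delta^p})_0}C_\bullet^{\ell^1}(F_\tau;\calA_\tau)\bigr)$. If the collections $(C_\bullet^{\ell^1}(F_\tau;\calA_\tau))_{\tau\in(Y^{\Delta^{p-1}})_0}$ and $(C_\bullet^{\ell^1}(F_\tau;\calA_\tau))_{\tau\in(Y^{\Delta^{p}})_0}$ satisfy $\BBC_q$, then this $E^1$-term becomes $\lsum{\tau\in(Y^{\Delta^p})_0}H_q^{\ell^1}(F_\tau;\calA_\tau)$; and since $f$ is a Kan fibration, each $(F_\tau;\calA_\tau)$ is homotopy equivalent in $\calL$ to the fiber over a vertex of $\tau$ (Proposition~\ref{prop:all:fibers:are:Kan:and:we}), which identifies $E^1_{p,q}$ with $C_p^{\ell^1}(Y;\calH_q^{\ell^1}(F;\calA))$ compatibly with the $p$-differential; inspecting a ladder diagram as in Lemma~\ref{lem:secondE} then gives $E^2_{p,q}\cong H_p^{\ell^1}(Y;\calH_q^{\ell^1}(F;\calA))$. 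Finally, $\BBC_q$ holds in case~(i) because finite collections satisfy it (the relevant index sets $(Y^{\Delta^{p-1}})_0$ and $(Y^{\Delta^{p}})_0$ being finite there), and in case~(ii) because, by the $\ell^1$-analogue of Corollary~\ref{cor:weak invariance UBC} (which combines the $\ell^1$-analogue of Corollary~\ref{cor:control_ubc_constant} with the chain version of Proposition~\ref{prop:bounded products}), $\UBC_q$ for the $\ell^1$-chain complex of a single fiber forces the whole collection of fiber chain complexes — pairwise homotopy equivalent in $\calL$, as $f$ is a Kan fibration over a connected base — to satisfy $\BBC_q$. Combining the two spectral sequences proves the theorem. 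The only genuine difficulty — exactly as in the proof of Theorem~\ref{thm:sss} — is that $\ell^1$-homology does not commute with $\ell^1$-direct sums in general, which is precisely why hypothesis~(i) or~(ii) is needed to identify the $E^2$-page; everything else is the bookkeeping of signs, coefficient twists, and homotopies dualised from Section~\ref{sec:sss}.
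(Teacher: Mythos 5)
Your proposal is correct and follows essentially the same route as the paper: the paper's proof is a brief sketch that defines the double $\ell^1$-chain complex and then invokes the dualization of Lemmas~\ref{lem:firstE} and~\ref{lem:secondE}, and your write-up spells out exactly that dualization, including the correct shift to the index sets $(Y^{\Delta^{p-1}})_0$ and $(Y^{\Delta^p})_0$ (since the first-page differential now lowers $p$) and the correct absence of a $q=0$ case (since $\UBC_0$ is not automatic for chain complexes, unlike $\UBC^0$ for cochain complexes).
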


\begin{proof}[Sketch of the proof]
    The proof is analogous to the proof of Theorem~\ref{thm:sss} for bounded cohomology. We define the \emph{double chain complex~$(C_\bb^{\ell^1}(f;\calA),d_h,d_v)$ of $\ell^1$-$(p,q)$-chains of~$f$ with coefficients in~$\calA$} as follows:
the modules are given by
\[
	C_{p,q}^{\ell^1}(f;\calA) \coloneqq \lsum{(\sigma, \tau) \in S_{p,q}f} \calA(\sigma(0,0)) \cdot (\sigma, \tau)
\]
and the horizontal and vertical differentials
\begin{align*}
	d_h\colon & C^{\ell^1}_{p,q}(f;\calA)\to C^{\ell^1}_{p-1,q}(f;\calA) \\
	d_v\colon & C^{\ell^1}_{p,q}(f;\calA)\to C^{\ell^1}_{p,q-1}(f;\calA)
\end{align*}
are given coordinate-wise respectively by
\begin{align*}
d_h (a \cdot (\sigma, \tau)) \coloneqq\ &  \mathcal{A}([\sigma |_{[(0, 0), (1, 0)]}]) (a) \cdot (\sigma\circ (\delta_0^{p} \times \id_{\Delta^q}), \tau \circ \delta_0^{p}) \\
&+ \sum_{i = 1}^{p} (-1)^i a \cdot (\sigma\circ (\delta_i^{p} \times \id_{\Delta^q}), \tau \circ \delta_i^{p}) \\
d_v (a \cdot (\sigma, \tau)) \coloneqq\ & (-1)^p \mathcal{A}([\sigma |_{[(0, 0), (0, 1)]}])(a) \cdot (\sigma\circ (\id_{\Delta^p} \times \delta^{q}_0), \tau) \\
&+\sum_{j = 1}^{q} (-1)^{j+p} a \cdot (\sigma \circ (\id_{\Delta^p} \times \delta^{q}_j), \tau).
\end{align*}
Here $\delta^{*}_k \colon \Delta^{*-1} \to \Delta^{*}$ is the inclusion of the $k$-th face.
Then, the two interpretations~\eqref{eqn:firstqthenp} and~\eqref{eqn:secondE identif} of the bisimplicial set~$S_{\bullet, \bullet}f$ yield the corresponding identifications of the cohomology of the total complex and the second page.
\end{proof}

\section{Applications}\label{sec:applications}
In this section we present several applications of the Serre spectral sequence in bounded cohomology.

\subsection{Highly-connected maps}
An $n$-connected Kan complex has trivial bounded cohomology in degrees~$\leq n$ (Corollary~\ref{cor:n-connected bc}). Using the Serre spectral sequence, we extend this statement to $n$-connected maps -- we note that this result can also be proved more directly~\cite{Ivanov17}.
We recall that a map $f \colon X \to Y$ between Kan complexes is an $n$-\emph{equivalence}, $n\ge 0$, if $f_* \colon \pi_k(X, x) \to \pi_k(Y, f(x))$ is an isomorphism in all degrees~$k \leq n-1$ and surjective in degree $k = n$ (for all basepoints).

\begin{cor}\label{cor:n-invariance bc}
Let~$f\colon X\to Y$ be an $(n+1)$-equivalence between (based) connected Kan complexes, $n \geq 0$, and let~$\calA$ be a seminormed local coefficient system on~$X$.
Then the canonical map
\[
    H^i_b(Y;\calH^0_b(F;\calA))\to H^i_b(X;\calA)
\]
is an isomorphism (of $R$-modules) for all~$i\le n$ and injective for~$i=n+1$.
\end{cor}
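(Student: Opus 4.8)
The plan is to reduce the statement to the Serre spectral sequence of a Kan fibration with $n$-connected fibre, and then run a routine first-quadrant chase. First I would replace $f$ by a Kan fibration: using Remark~\ref{rem:replacement:fibration}, factor $f$ as $X \xrightarrow{i} Z \xrightarrow{p} Y$ with $i$ a weak equivalence of Kan complexes and $p$ a Kan fibration. Since $\pi(i)$ is an equivalence of groupoids, $\calA$ extends, uniquely up to isomorphism, to a seminormed local coefficient system $\calA'$ on $Z$ with $i^*\calA' \cong \calA$, and Corollary~\ref{cor:weak invariance bc} gives an isometric isomorphism $H^*_b(Z;\calA') \cong H^*_b(X;\calA)$ compatible with the canonical maps in question. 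Hence we may assume $f = p$ is a Kan fibration, with fibre $F$.

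Next I would identify the fibre: the long exact sequence of homotopy groups of $p$, together with the hypothesis that $f$ is an $(n+1)$-equivalence, shows that $\pi_k(F) = 0$ for all $k \le n$, so $F$ is connected and in fact $n$-connected. By Corollary~\ref{cor:n-connected bc}, applied to (any fibre of) $F$, we get that $H^q_b(F_y;\calA_y) = 0$ for $1 \le q \le n$ and that $C^\bullet_b(F_y;\calA_y)$ satisfies $\UBC^q$ for every $q \le n$ and every $0$-simplex $y \in Y_0$. Feeding this into the Serre spectral sequence $(E^{\bullet,\bullet}_r)_r \Rightarrow H^*_b(X;\calA)$ of Theorem~\ref{thm:sss}: case~(ii) identifies $E^{p,0}_2 \cong H^p_b(Y;\calH^0_b(F;\calA))$ for all $p$, while for each $q$ with $1 \le q \le n$, case~(iii) applies (since $Y$ is connected and $C^\bullet_b(F_y;\calA_y)$ satisfies $\UBC^q$) and yields $E^{p,q}_2 \cong H^p_b(Y;\calH^q_b(F;\calA)) = 0$ because $\calH^q_b(F;\calA) = 0$. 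Thus, in total degree $i \le n+1$, the only entries $E^{p,q}_2$ with $p+q = i$ that can be nonzero are $E^{i,0}_2$ and, when $i = n+1$, also $E^{0,n+1}_2$.

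Then I would run the chase on the bottom row. No differential leaves the bottom row, and the only $d_r$ ($r \ge 2$) that can hit $E^{p,0}_r$ comes from a subquotient of $E^{p-r,\,r-1}_2$, which vanishes whenever $1 \le r-1 \le n$ or $p-r < 0$; for $p \le n+1$ the one remaining case $r-1 = n+1$ forces $p-r = p-(n+2) < 0$, so all incoming differentials vanish as well. Therefore $E^{p,0}_2 = E^{p,0}_\infty$ for all $p \le n+1$. For $i \le n$ this is the only nonzero graded piece of $H^i_b(X;\calA)$, so the edge homomorphism
\[
H^i_b(Y;\calH^0_b(F;\calA)) = E^{i,0}_2 = E^{i,0}_\infty \xrightarrow{\ \cong\ } H^i_b(X;\calA)
\]
is an isomorphism of $R$-modules. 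For $i = n+1$ the edge homomorphism is the composite $E^{n+1,0}_2 = E^{n+1,0}_\infty = F^{n+1}H^{n+1}_b(X;\calA) \hookrightarrow H^{n+1}_b(X;\calA)$, hence injective. It remains to identify this edge homomorphism with the ``canonical map'' of the statement (the same map that features in Theorem~\ref{thm:mapping thm}), which follows from the construction of the double complex and the functoriality discussed in Section~\ref{sec:sss}.

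The genuinely non-formal points — and the ones I would expect to cost the most care — are the fibration replacement together with the transport of the coefficient system along $i$ via homotopy invariance, and the bookkeeping identifying the spectral-sequence edge homomorphism with the canonical map in the statement. Everything else is a routine first-quadrant spectral sequence argument, driven entirely by the vanishing and $\UBC$ input supplied by Corollary~\ref{cor:n-connected bc}.
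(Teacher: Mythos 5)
Your proof is correct and takes essentially the same route as the paper's: replace $f$ by a Kan fibration via Remark~\ref{rem:replacement:fibration}, observe the fibre is $n$-connected, feed Corollary~\ref{cor:n-connected bc} into Theorem~\ref{thm:sss} to kill $E_2^{p,q}$ for $1\le q\le n$, and read off the conclusion from the edge homomorphisms. You spell out the spectral-sequence chase and the transport of $\calA$ across the weak equivalence $i$ in more detail than the paper, but the argument is the same.
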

\begin{proof}
By homotopy invariance of bounded cohomology, we may assume that~$f$ is a Kan fibration (Remark~\ref{rem:replacement:fibration}). 
Since the map~$f$ is an $(n+1)$-equivalence, the (based, connected) fiber~$F$ of $f$ is $n$-connected. Let $i \colon F \to X$ denote the inclusion.
Then, for all~$q\in \{1,\ldots,n\}$, we have $H^q_b(F; i^*\calA)=0$ and the seminormed $R$-cochain complex~$C^\bullet_b(F;i^*\calA)$ satisfies~$\UBC^q$ by Corollary~\ref{cor:n-connected bc}.
Thus, for~$q\in \{1,\ldots,n\}$, the terms~$E^{p,q}_2$ in the Serre spectral sequence vanish and therefore the edge homomorphisms in degrees~$\le n$ are isomorphisms (of $R$-modules). Moreover, the term~$E^{n+1,0}_2\cong H^{n+1}_b(Y;\calH^0_b(F;\calA))$ survives until the $E_\infty$-page and hence the edge homomorphism in degree~$n+1$ is injective.
\end{proof}

\subsection{Mapping theorems}

In this section, we prove Theorem~\ref{thm:mapping thm}.
Throughout this section, we work over the normed ring~$\R$.
For Banach coefficients, we obtain a generalization of Corollary~\ref{cor:n-invariance bc} assuming only that the fiber has trivial bounded cohomology:

\begin{cor}
\label{cor:acyclic fibre}
	Let~$f\colon X\to Y$ be a Kan fibration of simplicial sets, where~$Y$ is connected.
	Let~$\calA$ be a seminormed local coefficient system on~$X$ taking values in Banach $\R$-modules.
	 Let~$n\in \N$ and assume that for some 0-simplex~$y\in Y_0$ we have~$H^i_b(F_y;\calA_y)=0$ for all~$i\in \{1,\ldots,n\}$.
	 Then the canonical map
	\[
		 H^i_b(Y;\calH^0_b(F;\calA))\to H^i_b(X;\calA)
	\]
	is an isomorphism (of $\R$-modules) for all~$i\le n$ and injective for~$i=n+1$.
	\begin{proof}
		We argue that the seminormed $\R$-cochain complex~$C^\bullet_b(F_y;\calA_y)$ satisfies~$\UBC^q$ for all~$q\in \{1,\ldots,n\}$.
		Indeed, since~$\calA$ takes values in Banach modules, the cochain modules~$C^\bullet_b(F_y;\calA_y)$ are Banach.
		Since $H^i_b(F_y;\calA_y)=0$ for all~$i\in \{1,\ldots,n\}$ by assumption, the seminormed $\R$-cochain complex $C^\bullet_b(F_y;\calA_y)$ satisfies~$\UBC^q$ for all~$q\in \{1,\ldots,n\}$ by Corollary~\ref{cor:matsumoto-morita}.
        Then the claim follows from Theorem~\ref{thm:sss} as in the proof of Corollary~\ref{cor:n-invariance bc}. 
	\end{proof}
\end{cor}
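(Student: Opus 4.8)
The plan is to feed the hypothesis into Theorem~\ref{thm:sss} and then run the same spectral-sequence bookkeeping as in the proof of Corollary~\ref{cor:n-invariance bc}. The one genuinely new point---compared with Corollary~\ref{cor:n-invariance bc}, where $n$-connectedness of the fiber supplied both the vanishing of $H^q_b$ and the uniform boundary condition simultaneously---is that here we are only given the vanishing of $H^q_b(F_y;\calA_y)$ and must \emph{deduce} $\UBC^q$; this is exactly where the Banach hypothesis on $\calA$ enters.

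First I would observe that, since $\calA$ takes values in Banach $\R$-modules, each bounded cochain module $C^q_b(F_y;\calA_y)$ is itself a Banach $\R$-module, being a bounded product of Banach modules and hence complete. Therefore Corollary~\ref{cor:matsumoto-morita} applies, and the assumed vanishing $H^q_b(F_y;\calA_y)=0$ for $q\in\{1,\dots,n\}$ forces the cochain complex $C^\bullet_b(F_y;\calA_y)$ to satisfy $\UBC^q$ for every such $q$.

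Next, using that $Y$ is connected, I would invoke Theorem~\ref{thm:sss}: case~(iii) identifies $E^{p,q}_2\cong H^p_b(Y;\calH^q_b(F;\calA))$ for $q\in\{1,\dots,n\}$, and case~(ii) gives the same identification for $q=0$. Since $Y$ is connected, every fiber is homotopy equivalent in $\calL^*$ to $F_y$ (join $y$ to any other $0$-simplex by a path and use Proposition~\ref{prop:all:fibers:are:Kan:and:we} together with Corollary~\ref{cor:weak invariance bc}), so the coefficient system $\calH^q_b(F;\calA)$ vanishes identically for $1\le q\le n$. Hence $E^{p,q}_2=0$ whenever $1\le q\le n$, while $E^{p,0}_2\cong H^p_b(Y;\calH^0_b(F;\calA))$.

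Finally I would conclude as in the proof of Corollary~\ref{cor:n-invariance bc}. In total degrees $\le n$ the only nonzero column on the $E_2$-page is $q=0$; for $p\le n$ every differential into or out of $E^{p,0}_r$ has its source or target either in the vanishing strip $1\le q\le n$ or outside the first quadrant, so $E^{p,0}_2=E^{p,0}_\infty$ and the edge homomorphism $H^p_b(Y;\calH^0_b(F;\calA))=E^{p,0}_2\to H^p_b(X;\calA)$ is an isomorphism for $p\le n$. Likewise $E^{n+1,0}_2=E^{n+1,0}_\infty$ survives to the abutment, giving injectivity in degree $n+1$. The only step that needs a moment of care is the first one---checking that bounded cochains valued in a Banach module again form a Banach module, so that Corollary~\ref{cor:matsumoto-morita} is legitimately applicable; everything afterwards is the standard edge-homomorphism computation already carried out for Corollary~\ref{cor:n-invariance bc}.
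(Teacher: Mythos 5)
Your proposal is correct and follows essentially the same route as the paper: deduce $\UBC^q$ for $q\in\{1,\dots,n\}$ from the Banach hypothesis and the vanishing of $H^q_b(F_y;\calA_y)$ via Corollary~\ref{cor:matsumoto-morita}, then feed this into Theorem~\ref{thm:sss} (cases~(ii) and~(iii)) and run the same edge-homomorphism bookkeeping as in Corollary~\ref{cor:n-invariance bc}. The extra detail you spell out---that bounded products of Banach modules are complete, and that connectivity of $Y$ propagates the vanishing to all fibers---is exactly what the paper's terser proof is implicitly using.
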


We will replace the assumptions on the fiber~$F$ by assumptions on its homotopy groups.
Let~$n\ge 1$ and let~$\Gamma$ be a group (abelian if~$n\ge 2$).
A connected Kan complex~$X$ is a model for the \emph{Eilenberg--MacLane complex}~$K(\Gamma,n)$ if~$\pi_n(X)\cong \Gamma$ and~$\pi_i(X)=0$ for all~$i\neq n$.
Equivalently, the geometric realization~$|X|$ is a model for the topological Eilenberg--MacLane space.
The Kan complex~$K(\Gamma,n)$ is unique up to homotopy equivalence.
A model for~$K(\Gamma,1)$ is given by the nerve~$N(\Gamma)$ (Example~\ref{example:G-vs-BG}).

We use the following notation for bounded acyclicity with respect to a specified class of Banach coefficients. 

\begin{defi}[$\mathfrak{X}$-coefficient system]
	Let~$\mathfrak{X}$ be a subcategory of~$\Mod^\sn_\R$ consisting of Banach $\R$-modules.
	A \emph{$\mathfrak{X}$-coefficient system} on a simplicial set~$X$ is a functor~$\calA\colon \pi(X)\to \mathfrak{X}$.
	We denote
    by~$\mathfrak{X}\text{-}\BAc$ the class of groups~$\Gamma$ satisfying
    \[
        H^k_b(K(\Gamma,1);\calA)=0
    \]
    for all~$k\ge 1$ and all $\mathfrak{X}$-coefficient systems~$\calA$ on~$K(\Gamma,1)$.
    By homotopy invariance of bounded cohomology, the definition does not depend on the model for~$K(\Gamma,1)$.
\end{defi}

\begin{example}
\label{ex:classes of coeffs}
The main examples are the following:
	\begin{enumerate}[label=\enum]
		\item Let~$\mathfrak{X}$ be the full subcategory of all Banach modules.
		Then~$\mathfrak{X}$-$\BAc$ is the class of finite groups~\cite[Corollary~3.11]{Frigerio:book};
		\item Let~$\mathfrak{X}$ be the full subcategory of dual Banach modules.
		Then~$\mathfrak{X}$-$\BAc$ is the class of amenable groups~\cite[Theorem~3.12]{Frigerio:book};
		\item For the category~$\mathfrak{X}= \underline{\R}$ which consists only of $\R$ with the identity morphism, the class~$\mathfrak{X}$-$\BAc$ is that of boundedly acyclic groups by definition. The same class of groups arises if we consider instead the category of all dual Banach modules with identity morphisms (see \cite{Moraschini-Raptis2}). 
	\end{enumerate}
\end{example}

\begin{lemma}
\label{lem:Eilenberg MacLane}
    Let~$n\ge 1$ and let~$\Gamma$ be a group (abelian if $n\ge 2$).
	If~$\Gamma \in \mathfrak{X}$-$\BAc$, then we have 
	\[
		H^i_b(K(\Gamma,n);\calA)=0
	\]
	for all~$i\ge 1$ and all $\mathfrak{X}$-coefficient systems~$\calA$ on~$K(\Gamma,n)$.
	\begin{proof}
		We proceed by induction on~$n\ge 1$.
		For~$n=1$, the claim holds by the definition of~$\mathfrak{X}$-$\BAc$.
		For~$n>1$, we consider the path fibration
		\[
			K(\Gamma,n-1)\to W \xrightarrow{f} K(\Gamma,n),
		\]
  where $W$ is a contractible simplicial set. In particular, the bounded cohomology of $W$ is trivial for all seminormed local coefficient systems on $W$ (Corollary~\ref{cor:weak invariance bc}). We pull back~$\calA$ to a~$\mathfrak{X}$-coefficient system on $W$ along $f$, and then further to a $\mathfrak{X}$-coefficient system $(f^*\calA)_y$ on the fiber~$f^{-1}(y) = K(\Gamma,n-1)$ for some $y \in K(\Gamma, n)$. We have $H^i_b(K(\Gamma,n-1);(f^*\calA)_y)=0$ for all~$i\ge 1$ by the inductive hypothesis. Then the claim follows by applying Corollary~\ref{cor:acyclic fibre}. 
	\end{proof}
\end{lemma}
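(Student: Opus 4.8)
The plan is to induct on $n \ge 1$. The base case $n = 1$ is nothing but the definition of the class $\mathfrak{X}$-$\BAc$, so assume $n \ge 2$ and that the assertion holds for $n-1$. The input I would use is a simplicial model of the path--loop fibration: since $\Gamma$ is abelian (because $n \ge 2$), a standard bar-construction model of $K(\Gamma, n)$ fits into a principal Kan fibration
\[
    K(\Gamma, n-1) \longrightarrow W \overset{f}{\longrightarrow} K(\Gamma, n)
\]
whose total space $W$ is contractible (for $n = 2$ the fiber is $K(\Gamma,1) = N(\Gamma)$, which causes no trouble). Both base and total space are Kan complexes, and since $W$ is weakly equivalent to $\Delta^0$, Corollary~\ref{cor:weak invariance bc} gives $H^i_b(W; \calB) = 0$ for every $i \ge 1$ and every seminormed local coefficient system $\calB$ on $W$.

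Given an $\mathfrak{X}$-coefficient system $\calA$ on $Y := K(\Gamma, n)$, I would set $\calB := f^*\calA$, which is again an $\mathfrak{X}$-coefficient system on $W$ (it is the composite $\pi(W) \xrightarrow{\pi(f)} \pi(Y) \xrightarrow{\calA} \mathfrak{X}$), and in particular takes values in Banach $\R$-modules. Fix a $0$-simplex $y \in Y_0$ and write $F := f^{-1}(y) \cong K(\Gamma, n-1)$; since $f \circ \iota_y$ is constant at $y$, the restricted system $\calB_y = \iota_y^*\calB$ is the constant system at the Banach module $\calA(y)$ with trivial monodromy, hence an $\mathfrak{X}$-coefficient system on $K(\Gamma, n-1)$. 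By the inductive hypothesis $H^i_b(F; \calB_y) = 0$ for all $i \ge 1$, so Corollary~\ref{cor:acyclic fibre} applies to the Kan fibration $f$ (with $Y$ connected) with $n$ arbitrarily large, yielding canonical isomorphisms $H^i_b(Y; \calH^0_b(F; \calB)) \xrightarrow{\cong} H^i_b(W; \calB) = 0$ for all $i \ge 1$. It then remains to identify the coefficient system $\calH^0_b(F; \calB)$ on $Y$ with $\calA$: as $n \ge 2$, the base $K(\Gamma, n)$ is simply connected, so both are determined by their value at $y$, and by Proposition~\ref{prop:degree zero} we have $\calH^0_b(F; \calB)(y) = H^0_b(F; \calB_y) = \calA(y)^{\pi_1(F)} = \calA(y)$ because $\calB_y$ has trivial monodromy. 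Hence $H^i_b(K(\Gamma, n); \calA) = 0$ for all $i \ge 1$, completing the induction.

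The step I expect to be the main obstacle is setting up the fibration correctly — one needs a genuine Kan fibration of simplicial sets with contractible total space and fiber a model of $K(\Gamma, n-1)$, which is supplied by the $\overline{W}$ (bar) construction, with $W(\overline{W}^{\,n-1}\Gamma) \to \overline{W}^{\,n}\Gamma$ playing the role of $f$ — together with the bookkeeping that shows the pulled-back coefficient system, restricted to the fiber, stays in the class $\mathfrak{X}$ and that its $\calH^0$ along $Y$ recovers $\calA$ itself rather than a twist. The latter is exactly where the simple-connectivity of $K(\Gamma,n)$ for $n \ge 2$ (forcing all local systems and all monodromy to be trivial) enters; everything else is a formal consequence of Corollary~\ref{cor:acyclic fibre} and Corollary~\ref{cor:weak invariance bc}.
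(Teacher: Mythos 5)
Your proof is correct and follows the same route as the paper's: induction on $n$, contractibility of the total space of the path fibration, and Corollary~\ref{cor:acyclic fibre}. You in fact go a bit further than the paper by explicitly checking that $\calH^0_b(F; f^*\calA) \cong \calA$ (using simple connectivity of $K(\Gamma,n)$ for $n \geq 2$ and the trivial monodromy of the restricted system via Proposition~\ref{prop:degree zero}), a step the paper leaves to the reader.
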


\begin{lemma}
\label{lem:acyclic fiber}
    Let~$n\in \N$ and let~$X$ be a connected (based) Kan complex with~$\pi_i(X)\in \frakX$-$\BAc$ for all~$i\in \{1,\ldots,n\}$.
    Then we have $H^i_b(X;\calA)=0$ for all~$i\in \{1,\ldots,n\}$ and all $\frakX$-coefficient systems~$\calA$ on~$X$.
    \begin{proof} We consider a part of the Postnikov tower of $X$~\eqref{eqn:Postnikov}
        \[
            X\to P_n(X)\to P_{n-1}(X)\to \cdots\to P_2(X)\to P_1(X).
        \]
        Here $P_1(X)$ is a model for~$K(\pi_1(X),1)$ and the fiber of the map $P_2(X)\to P_1(X)$ is a model for~$K(\pi_2(X),2)$.
        It follows from Lemma~\ref{lem:Eilenberg MacLane} and the Serre spectral sequence in bounded cohomology (Theorem~\ref{thm:sss}) that we have $H^i_b(P_2(X);\calA)=0$ for all~$i\ge 1$ and all $\frakX$-coefficient systems~$\calA$.
        (Since every map in the tower induces a canonical isomorphism on~$\pi_1$, we identify the local coefficient systems that arise from pulling back.)
        Inductively, we obtain that $H^i_b(P_n(X);\calA)=0$ for all~$i\ge 1$ and all $\frakX$-coefficient systems~$\calA$.
        Since the map $X\to P_n(X)$ is an $(n+1)$-equivalence, the claim follows from Corollary~\ref{cor:n-invariance bc}. 
    \end{proof}
\end{lemma}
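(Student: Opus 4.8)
The plan is to climb the Moore--Postnikov tower of $X$ (Example~\ref{example:moore:postnikov})
\[
    X\to P_n(X)\to P_{n-1}(X)\to \cdots\to P_1(X),
\]
proving by induction on $k\in\{1,\dots,n\}$ that $H^i_b(P_k(X);\calB)=0$ for all $i\ge 1$ and every $\frakX$-coefficient system $\calB$ on $P_k(X)$, and then transferring the vanishing back to $X$ itself using that $\tau_n\colon X\to P_n(X)$ is an $(n+1)$-equivalence. The base case $k=1$ is immediate: $P_1(X)$ is a model for $K(\pi_1(X),1)$ and $\pi_1(X)\in\frakX$-$\BAc$ by hypothesis, which is exactly the asserted vanishing.

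For the inductive step I would use that, in the Moore model, the map $f\colon P_k(X)\to P_{k-1}(X)$ is a Kan fibration (Example~\ref{example:moore:postnikov}) whose fiber $F$ is a model for the Eilenberg--MacLane complex $K(\pi_k(X),k)$, and is simply connected since $k\ge 2$. Fixing a $0$-simplex $y$ of $P_{k-1}(X)$ and pulling $\calB$ back along $f$ and then to the fiber, one obtains an $\frakX$-coefficient system $(f^*\calB)_y$ on $F_y$; since $\pi_k(X)\in\frakX$-$\BAc$ is abelian, Lemma~\ref{lem:Eilenberg MacLane} gives $H^i_b(F_y;(f^*\calB)_y)=0$ for all $i\ge 1$. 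Because the coefficients are Banach (they are $\frakX$-coefficients), this vanishing upgrades via Corollary~\ref{cor:matsumoto-morita} to the uniform boundary condition needed to identify the $E_2$-page of the Serre spectral sequence of Theorem~\ref{thm:sss}, so Corollary~\ref{cor:acyclic fibre} applies in every degree and shows that the canonical map $H^i_b(P_{k-1}(X);\calH^0_b(F;\calB))\to H^i_b(P_k(X);\calB)$ is an isomorphism for all $i$. Since $F$ is simply connected, $\calH^0_b(F;\calB)$ is, by Proposition~\ref{prop:degree zero}, just $\calB$ regarded as an $\frakX$-coefficient system on $P_{k-1}(X)$ via the canonical isomorphism $\pi_1(P_k(X))\cong\pi_1(P_{k-1}(X))$; the inductive hypothesis then forces $H^i_b(P_k(X);\calB)=0$ for all $i\ge 1$. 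This completes the induction and gives $H^i_b(P_n(X);\calA)=0$ for all $i\ge 1$ and every $\frakX$-coefficient system $\calA$.

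To finish, I would note that $\tau_n\colon X\to P_n(X)$ is an $(n+1)$-equivalence with simply connected homotopy fiber (Example~\ref{example:moore:postnikov}), so Corollary~\ref{cor:n-invariance bc} identifies $H^i_b(P_n(X);\calA)$ with $H^i_b(X;\calA)$ for all $i\le n$ (the coefficient system $\calH^0_b$ of the $n$-connected fiber being again just $\calA$, by the $\pi_1$-isomorphism property). Combined with the vanishing just established, this yields $H^i_b(X;\calA)=0$ for $1\le i\le n$, as required. The homotopical inputs --- the Moore--Postnikov tower and the bounded acyclicity of Eilenberg--MacLane complexes --- are already in hand from Example~\ref{example:moore:postnikov} and Lemma~\ref{lem:Eilenberg MacLane}, so the work is light. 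I expect the only delicate points to be bookkeeping ones: checking at each stage that the coefficient systems produced by pulling back along the tower and by the $\calH^0_b(F;-)$ construction are canonically \emph{the same} $\frakX$-coefficient system (which is exactly where the $\pi_1$-isomorphism property of the Postnikov maps is used), and verifying the Banach hypothesis so that Corollary~\ref{cor:acyclic fibre}, and hence the $E_2$-identification in Theorem~\ref{thm:sss}, genuinely applies.
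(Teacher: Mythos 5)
Your proof is correct and follows essentially the same route as the paper: induct up the Moore--Postnikov tower, applying Lemma~\ref{lem:Eilenberg MacLane} together with the Serre spectral sequence (packaged as Corollary~\ref{cor:acyclic fibre}, using Corollary~\ref{cor:matsumoto-morita} for the $\UBC$ input) at each stage, and finish with Corollary~\ref{cor:n-invariance bc} applied to the $(n+1)$-equivalence $X\to P_n(X)$. One small slip worth noting: since $\calB$ is a coefficient system on the \emph{total space} $P_k(X)$, the $\frakX$-coefficient system on $F_y$ is obtained by restricting $\calB$ along the fiber inclusion (the $\calB_y$ of Section~\ref{sec:coefficient fibres}), not by pulling back along $f\colon P_k(X)\to P_{k-1}(X)$, so the notation $(f^*\calB)_y$ is a misnomer -- the intended argument is nevertheless the right one.
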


\begin{example}
    Since for $n \geq 2$ the (simplicial) homotopy groups are abelian~\cite[Corollary~I.7.7]{goerss2009simplicial}, therefore amenable, Lemma~\ref{lem:acyclic fiber} and Example~\ref{ex:classes of coeffs} imply that a connected Kan complex~$X$ with amenable fundamental group has vanishing bounded cohomology $H_b^i(X; \calA) = 0$ for all $i \geq 1$ and all local coefficient systems~$\calA$ of dual Banach modules. Recalling Remark~\ref{rem:bc:spaces:and:bc:groups}, this corresponds to  Gromov's well-known result about the vanishing of bounded cohomology for path-connected topological spaces with amenable fundamental group.
\end{example}

Lemma~\ref{lem:acyclic fiber} and Corollary~\ref{cor:acyclic fibre} together yield:

\begin{thm}[Mapping theorem for classes of coefficients]
\label{thm:mapping thm class}
	Let~$f\colon X\to Y$ be a Kan fibration between connected simplicial sets with connected fiber~$F$.
    Let~$\calA$ be a seminormed local coefficient system on~$X$ that restricts to a $\frakX$-coefficient system on~$F$.
	Let~$n\in \N$ and suppose that $\pi_i(F)\in \mathfrak{X}$-$\BAc$ for all~$i\le n$ (for any basepoint).
	Then the canonical map
	\[
		 H^i_b(Y;\calH^0_b(F;\calA))\to H^i_b(X;\calA)
	\]
	is an isomorphism (of $\R$-modules) for all~$i\le n$ and injective for~$i=n+1$.
\end{thm}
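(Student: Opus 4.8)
The plan is to deduce the statement directly from Lemma~\ref{lem:acyclic fiber} and Corollary~\ref{cor:acyclic fibre}: the hypotheses on the homotopy groups of $F$ are precisely what is needed to make the fiber boundedly acyclic in degrees $1,\dots,n$ for $\frakX$-coefficients, and a boundedly acyclic fiber forces the base edge map of the Serre spectral sequence to be an isomorphism through that range.

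First I would fix a $0$-simplex $y\in Y_0$, write $\iota\colon F\hookrightarrow X$ for the inclusion of the fiber $F=F_y$, and set $\calA_y\coloneqq\iota^*\calA$. Since $f$ is a Kan fibration, $F$ is a Kan complex (Proposition~\ref{prop:all:fibers:are:Kan:and:we}), and it is connected by hypothesis. By assumption $\calA_y$ is an $\frakX$-coefficient system on $F$, so its values lie in $\frakX$ and are in particular Banach $\R$-modules; since $F$ is nonempty, $X$ is connected, and every morphism of $\pi(X)$ is sent by $\calA$ to an isometric isomorphism, it follows that $\calA$ itself takes values in Banach $\R$-modules (each $\calA(x)$ being isometrically isomorphic to some value of $\calA_y$). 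This is the one point that calls for a word of care, and it is exactly where connectedness of $X$ enters, since Corollary~\ref{cor:acyclic fibre} requires the coefficient system on $X$ to be globally Banach-valued.

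Next I would apply Lemma~\ref{lem:acyclic fiber} to the connected Kan complex $F$ with the $\frakX$-coefficient system $\calA_y$: as $\pi_i(F)\in\frakX$-$\BAc$ for all $i\in\{1,\dots,n\}$ (a basepoint-independent condition since $F$ is connected), we obtain $H^i_b(F;\calA_y)=0$ for all $i\in\{1,\dots,n\}$. Feeding this into Corollary~\ref{cor:acyclic fibre} for the Kan fibration $f\colon X\to Y$ with $Y$ connected and $\calA$ Banach-valued then yields that the canonical map $H^i_b(Y;\calH^0_b(F;\calA))\to H^i_b(X;\calA)$ is an isomorphism of $\R$-modules for all $i\le n$ and injective for $i=n+1$, which is the assertion. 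I do not expect any genuine obstacle at this stage: the real work sits upstream, in Lemma~\ref{lem:acyclic fiber} (where the Serre spectral sequence is run up the Postnikov tower of $F$, using that $K(\Gamma,m)$ is $\frakX$-boundedly acyclic whenever $\Gamma\in\frakX$-$\BAc$) and in Corollary~\ref{cor:acyclic fibre} (where the vanishing of the columns $E_2^{p,q}$ with $1\le q\le n$ comes from the Matsumoto--Morita characterization of $\UBC$ for Banach cochain complexes, so that case~(iii) of Theorem~\ref{thm:sss} applies, and the edge homomorphisms are then read off as in Corollary~\ref{cor:n-invariance bc}).
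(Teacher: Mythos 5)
Your proof is correct and follows exactly the paper's argument, which likewise derives the theorem by combining Lemma~\ref{lem:acyclic fiber} (applied to the connected Kan fiber $F$ to kill $H^i_b(F;\calA_y)$ for $1\le i\le n$) with Corollary~\ref{cor:acyclic fibre}. The extra step you add, propagating the Banach-valuedness from the fiber to all of $X$ via connectedness of $X$ and the fact that $\calA$ sends morphisms of $\pi(X)$ to isometric isomorphisms, is a sound and welcome bit of care, although the proof of Corollary~\ref{cor:acyclic fibre} in fact only uses that the restriction $\calA_y$ is Banach-valued.
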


\begin{rem}
A converse of Theorem~\ref{thm:mapping thm class} for the category $\frakX$ of dual Banach $\R$-modules also holds (see \cite[Theorem A]{Moraschini-Raptis}). This gives a characterization of the maps between path-connected spaces whose homotopy fiber is path-connected and has amenable fundamental group. In addition, an analogous characterization is known for the class of local coefficient systems $\calA$ of dual Banach $\R$-modules on $X$ that are pulled back from $Y$. This involves the category $\underline{\R}$ (see \cite[Theorem~C]{Moraschini-Raptis}) or, equivalently, the category of dual Banach modules and identity morphisms (see \cite{Moraschini-Raptis2}). It would be interesting to know whether such characterizations can be extended to general classes of local coefficient systems. 
\end{rem}

\begin{proof}[Proof of Theorem~\ref{thm:mapping thm}] 
    This is obtained  from Theorem~\ref{thm:mapping thm class} applied to the classes of coefficients in Example~\ref{ex:classes of coeffs}.
\end{proof}

\subsection{Serre fibrations of topological spaces}
Given a Serre fibration $X\to Y$ of topological spaces, applying Theorem~\ref{thm:sss} to the Kan fibration $\Sing(X)\to \Sing(Y)$ yields:

\begin{thm}
\label{thm:Serre fibrations}
    Let~$f\colon X\to Y$ be a Serre fibration of topological spaces and let~$\calA$ be a seminormed local coefficient system on~$X$.
    Then there is a first-quadrant spectral sequence~$(E^{\bullet,\bullet}_r)_r$ converging to~$H^*_b(X;\calA)$.
    We have an isomorphism of seminormed $R$-modules
    \[
        E^{p,q}_2\cong H^p_b(Y;\calH^q_b(F;\calA))
    \]
    in each of the following cases:
    \begin{enumerate}[label=\enum]
        \item $q=0$;
        \item $Y$ is connected and for some~$y\in Y$ the seminormed $R$-cochain complex $C^\bullet_b(F_y;\calA_y)$ satisfies~$\UBC^q$.
    \end{enumerate}
\end{thm}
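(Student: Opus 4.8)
The plan is to apply Theorem~\ref{thm:sss} to the Kan fibration $\Sing(f)\colon \Sing(X)\to \Sing(Y)$ and then translate the output back to topological spaces via the identifications recorded in Remark~\ref{rem:bc:spaces:and:bc:groups}. Recall from the preliminaries that $\Sing(f)$ is a Kan fibration because $f$ is a Serre fibration, and that a seminormed local coefficient system on the topological space $X$ is precisely a functor $\pi(\Sing(X))\to \Mod^\sn_R$, so $\calA$ may be regarded as a seminormed local coefficient system on $\Sing(X)$. Theorem~\ref{thm:sss} then yields a first-quadrant spectral sequence converging to $H^*_b(\Sing(X);\calA)$, and the latter coincides with $H^*_b(X;\calA)$ by definition.

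The first identification I would carry out is that of the fibers. For a $0$-simplex $y\in \Sing(Y)_0$, i.e.\ a point $y\in Y$, an $n$-simplex of $\Sing(X)$ lying over the constant simplex at $y$ is exactly a continuous map $|\Delta^n|\to f^{-1}(y)$; hence the simplicial fiber $F_y=\Delta^0\times_{\Sing(Y)}\Sing(X)$ is canonically isomorphic to $\Sing(f^{-1}(y))$, and under this isomorphism the restricted coefficient system $\iota_y^*\calA$ corresponds to the restriction of $\calA$ to the topological fiber $f^{-1}(y)$ (since $\pi(\Sing(f^{-1}(y)))\simeq\pi(f^{-1}(y))$ and the inclusions match up). Consequently $H^q_b(F_y;\calA_y)=H^q_b(\Sing(f^{-1}(y));\calA_y)=H^q_b(f^{-1}(y);\calA_y)$, and the cochain complex $C^\bullet_b(F_y;\calA_y)$ appearing in hypothesis~(ii) is exactly the bounded cochain complex of the topological fiber appearing in the statement of Theorem~\ref{thm:Serre fibrations}.

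Next I would match the coefficient systems on the base. Since $\pi(\Sing(Y))\xrightarrow{\sim}\pi(Y)$ is an equivalence of groupoids, the seminormed local coefficient system $\calH^q_b(F;\calA)$ on $\Sing(Y)$ from~\eqref{eqn:coefficient fibres} determines and is determined by a seminormed local coefficient system on $Y$, still denoted $\calH^q_b(F;\calA)$, with $H^p_b(\Sing(Y);\calH^q_b(F;\calA))=H^p_b(Y;\calH^q_b(F;\calA))$; thus the $E_2$-identification claimed in Theorem~\ref{thm:Serre fibrations} is precisely the one supplied by Theorem~\ref{thm:sss} for $\Sing(f)$. Finally I would observe that the hypotheses correspond: case~(i) here ($q=0$) is case~(ii) of Theorem~\ref{thm:sss}; case~(ii) here (with $Y$ connected, hence $\Sing(Y)$ connected, and $C^\bullet_b(F_y;\calA_y)$ satisfying $\UBC^q$) is case~(iii) of Theorem~\ref{thm:sss}; and case~(i) of Theorem~\ref{thm:sss} (finiteness of the base in low dimensions) is vacuous for $\Sing(Y)$ and is simply dropped.

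The argument is essentially a dictionary between the simplicial and topological pictures, so there is no real obstacle; the only point demanding a little care is the fiber identification $F_y\cong\Sing(f^{-1}(y))$ together with the compatibility of the restricted coefficient systems, and it is exactly here that the Serre fibration hypothesis on $f$ (ensuring $\Sing(f)$ is a Kan fibration, so that Theorem~\ref{thm:sss} applies) is used.
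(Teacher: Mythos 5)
Your proposal is correct and follows exactly the paper's one-line proof: apply Theorem~\ref{thm:sss} to the Kan fibration $\Sing(f)\colon\Sing(X)\to\Sing(Y)$ and translate back via Remark~\ref{rem:bc:spaces:and:bc:groups}. The extra details you supply (the identification $F_y\cong\Sing(f^{-1}(y))$, the matching of coefficient systems through $\pi(\Sing(Y))\xrightarrow{\sim}\pi(Y)$, and the correspondence of the hypotheses) are all correct and are the points the paper implicitly leaves to the reader.
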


We note, however, that an advantage of the Serre spectral sequence in the more general context of Kan fibrations is that this works also for simplicial sets which are not Kan complexes. 

For covering maps, which are fibrations with discrete fibers, the terms~$E^{p,q}_2$ are trivial for~$q\ge 1$, so we conclude:

\begin{cor}
\label{cor:covering}
Let~$f\colon X\to Y$ be a covering map of topological spaces with (discrete) fiber~$F$.
	Let~$\calA$ be a seminormed local coefficient system on~$X$.
	Then the canonical map
	\[
		 H^i_b(Y;\calH^0_b(F;\calA))\to H^i_b(X;\calA)
	\]
	is an isomorphism (of $R$-modules) for all $i \geq 0$.
\end{cor}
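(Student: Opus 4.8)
The plan is to feed the covering map into the Serre spectral sequence. Since a covering map is in particular a Serre fibration, $\Sing(f)\colon \Sing(X)\to\Sing(Y)$ is a Kan fibration, and $H^*_b$ of a space is by definition $H^*_b$ of its singular set; so I would apply Theorem~\ref{thm:sss} (equivalently Theorem~\ref{thm:Serre fibrations}) to $\Sing(f)$ to obtain a first-quadrant spectral sequence converging to $H^*_b(X;\calA)$ with $E^{p,0}_2\cong H^p_b(Y;\calH^0_b(F;\calA))$. The goal is then to show that $E^{p,q}_2=0$ for all $q\geq 1$, so that the spectral sequence collapses onto its bottom row and the edge homomorphism $H^n_b(Y;\calH^0_b(F;\calA))=E^{n,0}_2\to H^n_b(X;\calA)$ — which is the canonical map, exactly as in the proof of Corollary~\ref{cor:n-invariance bc} — becomes an isomorphism of $R$-modules in every degree.

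The first step is to understand the fibers. For a point $y\in Y$ the simplicial fiber $F_y=\Sing(f^{-1}(y))$ is the singular set of the discrete space $f^{-1}(y)$, hence the constant simplicial set on that set; and for an arbitrary $p$-simplex $\tau$ of $\Sing(Y)$, the fiber $F_\tau$ is homotopy equivalent in $\calL^*$ to $F_y$ for a vertex $y$ of $\tau$ by Proposition~\ref{prop:all:fibers:are:Kan:and:we}. The second step is a direct computation for constant simplicial sets: if $D$ is the constant simplicial set on a set $S$ and $\calB$ is any seminormed local coefficient system on $D$, then the fundamental groupoid of $D$ is discrete, the bounded cochain modules are all $\bprod{s\in S}\calB(s)$, and the differential $\delta^n$ is the identity for $n$ odd and zero for $n$ even; hence $H^q_b(D;\calB)=0$ for every $q\geq 1$ and $C^\bullet_b(D;\calB)$ satisfies $1$-$\UBC^q$ for every $q\geq 1$.

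Combining these, by Corollary~\ref{cor:control_ubc_constant} the complex $C^\bullet_b(F_\tau;\calA_\tau)$ satisfies $(q+1)$-$\UBC^q$ for every $p$-simplex $\tau$ of $\Sing(Y)$ and every $q\geq 1$, with a constant independent of $\tau$. Proposition~\ref{prop:bounded products} then yields that the uniform collections $(C^\bullet_b(F_\tau;\calA_\tau))_{\tau\in(\Sing(Y)^{\Delta^p})_0}$ satisfy $\BBC^q$ for all $p$ and all $q$ (for $q=0$ this is already clear from $0$-$\UBC^0$), so Lemma~\ref{lem:secondE} identifies $E^{p,q}_2\cong H^p_b(\Sing(Y);\calH^q_b(F;\calA))$ in all bidegrees. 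Since $\calH^q_b(F;\calA)(y)=H^q_b(F_y;\calA_y)=0$ for every $q\geq 1$, the coefficient system $\calH^q_b(F;\calA)$ vanishes for $q\geq 1$, whence $E^{p,q}_2=0$ there and the conclusion follows as explained above.

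I expect the only genuinely delicate point to be the verification of the bounded boundary condition across the (possibly infinite, possibly spread over many components of $Y$) family of fibers: bounded cohomology does not commute with bounded products in general, so without the observation that the discrete fibers satisfy $\UBC^q$ with a single uniform constant, the $E_2$-page could fail to be identified — exactly the phenomenon highlighted in Remark~\ref{rem:non-BBC}. Everything else is routine bookkeeping.
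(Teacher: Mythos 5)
Your argument is correct and takes essentially the same route the paper gestures at: run the Serre spectral sequence and observe that the $E_2$-page is concentrated on the bottom row. The paper's own ``proof'' is a single sentence stating that for covering maps (fibrations with discrete fibers) the terms $E^{p,q}_2$ are trivial for $q\geq 1$, with no justification. You fill in precisely the nontrivial part: since $Y$ is not assumed connected, neither case~(iii) of Theorem~\ref{thm:sss} nor case~(ii) of Theorem~\ref{thm:Serre fibrations} applies directly, so one must verify the bounded boundary condition across the whole family of fibers rather than at a single basepoint. Your verification — $F_y$ is a constant simplicial set, hence $C^\bullet_b(F_y;\calA_y)$ has alternating identity/zero differentials and satisfies $1$-$\UBC^q$ for $q\geq 1$; transfer to general $F_\tau$ with uniform constant $q+1$ via Corollary~\ref{cor:control_ubc_constant}; apply Proposition~\ref{prop:bounded products} and Lemma~\ref{lem:secondE} — is exactly what is needed and is correct. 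One small remark: for a covering map, not only $F_y$ but every $F_\tau$ is itself a constant simplicial set (a lift $|\Delta^p\times\Delta^q|\to X$ of $\tau\circ\mathrm{pr}_1$ is determined by its value at $(0,0)$, and its restriction along $\{0\}\times|\Delta^q|$ is constant), so one could skip the homotopy transfer entirely and note that the bounded product $\bprod{\tau}C^\bullet_b(F_\tau;\calA_\tau)$ again has alternating identity/zero differentials, giving $\secondE^{p,q}_1=0$ for $q\geq 1$ directly without invoking $\BBC$ at all. But your route is equally valid and more robust.
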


\begin{example}
   An inclusion of discrete groups $H\hookrightarrow G$ induces a covering map (up to homotopy) of aspherical spaces $BH\to BG$ with fiber~$G/H$. 
Applying Corollary~\ref{cor:covering} (and recalling Remark~\ref{rem:bc:spaces:and:bc:groups}), we recover the (non-isometric form of the) Eckmann--Shapiro Lemma in bounded cohomology~\cite{monod}. 
\end{example}

The case of covering spaces generalizes as follows -- we leave variations of this result for other classes $\mathfrak{X}$-$\BAc$ to the interested reader. 

\begin{cor}
\label{cor:covering2}
Let~$f\colon X\to Y$ be a Serre fibration of path-connected topological spaces with fiber~$F$. Suppose that the homotopy groups of each path-component of~$F$ are amenable and let~$\calA$ be a seminormed local coefficient system on~$X$ of dual Banach $\R$-modules. Then the canonical map $$H^i_b(Y;\calH^0_b(F;\calA)) \to H^i_b(X;\calA)$$ is an isomorphism (of $\R$-modules)
for all $i \geq 0$.
\end{cor}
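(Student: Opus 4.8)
The plan is to deduce this from Corollary~\ref{cor:acyclic fibre} applied to the Kan fibration $\Sing(f)\colon \Sing(X)\to \Sing(Y)$, using that the bounded cohomology of a space agrees with that of its singular set (Remark~\ref{rem:bc:spaces:and:bc:groups}). Fix a point $y\in Y$ and set $F=f^{-1}(y)$; the simplicial fiber of $\Sing(f)$ over $y$ is $\Sing(F)$, the base $\Sing(Y)$ is connected, and $\calA$ takes values in dual Banach, hence Banach, $\R$-modules. So, taking $n$ arbitrarily large in Corollary~\ref{cor:acyclic fibre}, the asserted isomorphism in all degrees follows once we establish
\[
    H^i_b(\Sing(F);\calA_F)=0 \qquad \text{for all } i\ge 1,
\]
where $\calA_F$ denotes the restriction of $\calA$ to $\Sing(F)$. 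This vanishing for the (possibly disconnected) fiber is the whole content.

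To prove the vanishing, first I would write $F=\coprod_{j\in J}F^j$ as the disjoint union of its path-components, so that $C^\bullet_b(\Sing(F);\calA_F)=\bprod{j\in J}C^\bullet_b(\Sing(F^j);\calA_{F^j})$ as (uniform collections of) cochain complexes. Each $F^j$ is path-connected with $\pi_1(F^j)$ amenable by hypothesis and $\pi_k(F^j)$ abelian — hence amenable — for $k\ge 2$; applying Lemma~\ref{lem:acyclic fiber} with the category $\frakX$ of dual Banach $\R$-modules, for which $\frakX\text{-}\BAc$ is the class of amenable groups (Example~\ref{ex:classes of coeffs}), gives $H^i_b(\Sing(F^j);\calA_{F^j})=0$ for all $i\ge 1$ and all $j\in J$. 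Since these are cochain complexes of Banach modules, Corollary~\ref{cor:matsumoto-morita} shows that each $C^\bullet_b(\Sing(F^j);\calA_{F^j})$ satisfies $\UBC^i$ for every $i\ge 1$. The crucial point, which I would prove next, is that path-connectedness of the total space $X$ forces the components $F^j$ to be mutually homotopy equivalent: from the long exact sequence of the Serre fibration $f$ together with $\pi_0(X)=\ast$, the boundary map $\pi_1(Y,y)\to \pi_0(F)$ is surjective, i.e.\ $\pi_1(Y,y)$ acts transitively on $\pi_0(F)$, so restricting the $\calL^*$-self-equivalences of $(\Sing(F);\calA_F)$ induced by loops at $y$ to matching path-components exhibits the objects $(\Sing(F^j);\calA_{F^j})$ as pairwise homotopy equivalent in $\calL^*$. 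As the underlying simplicial sets are Kan complexes, Corollary~\ref{cor:weak invariance UBC} now applies: from $\UBC^i$ for one member we conclude that the uniform collection $\bigl(C^\bullet_b(\Sing(F^j);\calA_{F^j})\bigr)_{j\in J}$ satisfies $\BBC^i$ for every $i\ge 1$, whence $H^i_b(\Sing(F);\calA_F)\cong \bprod{j\in J}H^i_b(\Sing(F^j);\calA_{F^j})=0$.

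I expect the main obstacle to be precisely the possible disconnectedness of $F$: bounded cohomology does not commute with the infinite bounded product of the cochain complexes of the path-components in general, so pointwise vanishing on each $F^j$ (which is immediate from the mapping theorem for amenable homotopy groups) is not by itself enough. What makes it work is the transitivity of the $\pi_1(Y)$-action on $\pi_0(F)$ forced by path-connectedness of $X$, which promotes the a priori non-uniform $\UBC$-constants on the components to the uniform control encoded by $\BBC$. The remaining points are routine: that the restriction of an $\calL^*$-homotopy equivalence to a matching sub-coproduct of path-components is again an $\calL^*$-homotopy equivalence; that the canonical map in the statement coincides with the edge homomorphism produced by Corollary~\ref{cor:acyclic fibre}, which here is an isomorphism in every degree since $E^{p,q}_2$ vanishes for $q\ge 1$ and the spectral sequence collapses; and that $\calH^0_b(F;\calA)$ is a well-defined coefficient system, the case $q=0$ requiring no hypothesis exactly as for covering maps in Corollary~\ref{cor:covering}.
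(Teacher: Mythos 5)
Your proof is correct, and it takes a genuinely different route from the paper. You attack the disconnectedness of $F$ head-on: decompose $C^\bullet_b(\Sing F;\calA_F)$ as a bounded product over path-components, get vanishing and $\UBC$ on each component from Lemma~\ref{lem:acyclic fiber} and Corollary~\ref{cor:matsumoto-morita}, and then use the transitivity of the monodromy $\pi_1(Y,y)$-action on $\pi_0(F)$ (forced by path-connectedness of~$X$) to promote componentwise $\UBC$ to $\BBC$ via Corollary~\ref{cor:weak invariance UBC}. This yields $H^{\geq 1}_b(\Sing F;\calA_F)=0$, after which Corollary~\ref{cor:acyclic fibre} finishes the job in one shot. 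The paper instead factors the fibration through its fiberwise Moore--Postnikov truncation $X\to X'\to Y$, where $X'\to Y$ is a covering with fiber $\pi_0(F)$ and $X\to X'$ has path-connected amenable homotopy fiber; it then applies the mapping theorem (Theorem~\ref{thm:mapping thm class}) to $X\to X'$, the covering case to $X'\to Y$, and a further application of the mapping theorem to identify the resulting coefficient system with $\calH^0_b(F;\calA)$. Both approaches hinge on the same underlying fact — that path-connectedness of $X$ ties the components of $F$ together — but you exploit it at the level of the fiber's cochain complex ($\UBC$-constants become uniform across the orbit), whereas the paper pushes the disconnectedness into the discrete fiber of a covering, where the $E_2$-identification is automatic. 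Your route is more direct (one spectral sequence application rather than two plus a coefficient identification), at the cost of the explicit argument that restricting an $\calL^*$-self-equivalence of $(\Sing F;\calA_F)$ to matching path-components gives $\calL^*$-equivalences between those components — which you correctly flag and which does hold, since homotopies do not change path-components and the coefficient isomorphism restricts along the groupoid decomposition $\pi(\Sing F)=\coprod_j\pi(\Sing F^j)$, so that Corollary~\ref{cor:simpl:homotop:equi:iff:weakequiv:and:iso} applies componentwise.
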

\begin{proof}
By taking path-components fiberwise, we obtain an associated covering map $f' \colon X' \to Y$ with fiber $\pi_0(F)$ and a map $p \colon X \to X'$ over $Y$. (This is the Moore--Postnikov truncation of the map $f$.) The space $X'$ is again path-connected and the map $p$ is $\pi_1$-surjective with amenable kernel and path-connected homotopy fiber. Specifically, the homotopy fiber of $p$ is identified with a path-component $F'$ of $F$.

Thus, using Theorem \ref{thm:mapping thm class} for the class of dual Banach $\R$-modules, we obtain canonical isomorphisms (of $\R$-modules) for all $i \geq 0$:
$$H^i_b(X';\calH^0_b(F';\calA)) \cong H^i_b(X;\calA).$$
We write $\calA' \coloneqq\calH^0_b(F';\calA)$ for brevity. 
On the other hand, the spectral sequence for the covering map $f'$ yields canonical isomorphisms (of $\R$-modules):
$$H^i_b(Y;\calH^0_b(\pi_0(F);\calA')) \cong H^i_b(X';\calA')$$
for all $i \geq 0$. 
Lastly, by Theorem~\ref{thm:mapping thm class} applied to the family of Serre fibrations $\{F_y \to \pi_0(F_y)\}_{y \in Y}$, the local coefficient system $\calH^0_b(\pi_0(F);\calA')$
on $Y$ is identified canonically with the local coefficient system 
$\calH^0_b(F;\calA)$ on $Y$. 
\end{proof}

\subsection{Group extensions}
The Serre spectral sequence for Kan fibrations recovers the Lyndon--Hochschild--Serre spectral sequence for group extensions~\cite{Noskov92, Burger_Monod_2002, monod} (see also~\cite{echtler}).
Given a group extension $1\to \Lambda\to \Gamma\to Q\to 1$, applying Theorem~\ref{thm:sss} to the Kan fibration $N(\Gamma)\to N(Q)$ yields:

\begin{thm}[Lyndon--Hochschild--Serre spectral sequence in bounded cohomology]
\label{thm:LHS}
    Let $1\to \Lambda\to \Gamma\to Q\to 1$ be a group extension.
    Let~$A$ be a seminormed $\Gamma$-module.
    Then there is a first-quadrant spectral sequence~$(E^{\bullet,\bullet}_r)_r$ converging to~$H^*_b(\Gamma;A)$.
    We have an isomorphism of seminormed $R$-modules
    \[
        E^{p,q}_2\cong H^p_b(Q;H^q_b(\Lambda;A))
    \]
    in each of the following cases:
    \begin{enumerate}[label=\enum]
        \item the group~$Q$ is finite;
        \item $q=0$;
        \item $A$ is a Banach $\Gamma$-module and~$H^q_b(\Lambda;A)$ is Banach.
    \end{enumerate}
\end{thm}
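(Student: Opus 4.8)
The plan is to apply the Serre spectral sequence of Theorem~\ref{thm:sss} to the map of nerves $f\colon N(\Gamma)\to N(Q)$ induced by the surjection $\Gamma\onto Q$, and then to reinterpret everything in terms of group bounded cohomology via Remark~\ref{rem:bc:spaces:and:bc:groups}. First I would check that $f$ is a Kan fibration: both $N(\Gamma)$ and $N(Q)$ are nerves of groupoids, hence Kan complexes, and the nerve of a surjective homomorphism of groups is a Kan fibration (being the nerve of an isofibration of groupoids; alternatively one verifies the horn-filling condition directly, using that the nerve of a group is $2$-coskeletal and that groupoids admit fillers for outer horns). Thus Theorem~\ref{thm:sss} produces a first-quadrant spectral sequence converging to $H^*_b(N(\Gamma);\calA)$, where $\calA$ denotes the seminormed local coefficient system on $N(\Gamma)$ associated to the seminormed $\Gamma$-module $A$; by Remark~\ref{rem:bc:spaces:and:bc:groups} this target is $H^*_b(\Gamma;A)$.

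Next I would identify the fiber and the coefficient system on the base. The fiber $F = F_e$ over the unique $0$-simplex $e$ of $N(Q)$ consists of those simplices $(\gamma_1,\dots,\gamma_n)$ of $N(\Gamma)$ all of whose entries lie in $\Lambda = \ker(\Gamma\onto Q)$, so $F = N(\Lambda)$ and the fiber inclusion induces on fundamental groupoids the inclusion $\Lambda\hookrightarrow\Gamma$. Hence the restricted system $\calA_e$ on $N(\Lambda)$ is the one associated to $A$ viewed as a seminormed $\Lambda$-module, and $H^q_b(F_e;\calA_e)\cong H^q_b(\Lambda;A)$. Since $N(Q)$ has a single $0$-simplex and $\pi(N(Q))\cong Q$, the coefficient system $\calH^q_b(F;\calA)$ of~\eqref{eqn:coefficient fibres} is a seminormed $Q$-module; I would then check that its monodromy $Q$-action (Section~\ref{sec:coefficient fibres}), obtained by lifting a $1$-simplex $q\in Q$ of $N(Q)$ to an element of $\Gamma$ and taking the induced homotopy equivalence of fibers, coincides with the standard $Q$-action on $H^q_b(\Lambda;A)$ coming from conjugation in the extension $1\to\Lambda\to\Gamma\to Q\to 1$ together with the $\Gamma$-action on $A$ --- exactly as in the classical Lyndon--Hochschild--Serre spectral sequence. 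Consequently $H^p_b(N(Q);\calH^q_b(F;\calA))\cong H^p_b(Q;H^q_b(\Lambda;A))$, again by Remark~\ref{rem:bc:spaces:and:bc:groups}.

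It then remains to match the three hypotheses of the statement with cases~(i)--(iii) of Theorem~\ref{thm:sss}. If $Q$ is finite, then $N(Q)_n = Q^n$ is finite in every dimension, which gives case~(i) (for every $p$). The case $q=0$ is case~(ii) verbatim. If $A$ is Banach, then the cochain modules $C^n_b(N(\Lambda);A)$ are Banach (bounded functions into a Banach module, with the supremum norm), $N(Q)$ is connected, and by Corollary~\ref{cor:matsumoto-morita} the complex $C^\bullet_b(N(\Lambda);A)$ satisfies $\UBC^q$ if and only if $H^q_b(N(\Lambda);A)\cong H^q_b(\Lambda;A)$ is Banach; this is case~(iii). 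In each case the stated identification of $E^{p,q}_2$ follows, which completes the argument.

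The only step that is not a citation or routine bookkeeping is the identification in the second paragraph of the monodromy coefficient system $\calH^q_b(F;\calA)$ with the standard $Q$-module structure on $H^q_b(\Lambda;A)$. This requires carefully unwinding the construction of the action in Section~\ref{sec:coefficient fibres} for the specific fibration $N(\Gamma)\to N(Q)$ and comparing it with the extension action; although entirely analogous to the classical case, one must keep track of the various identifications (fiber $=N(\Lambda)$, restricted coefficients $=A|_\Lambda$, monodromy $=$ conjugation).
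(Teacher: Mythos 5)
Your proposal is correct and follows essentially the same approach as the paper, which itself treats Theorem~\ref{thm:LHS} as an immediate instance of Theorem~\ref{thm:sss} applied to the Kan fibration $N(\Gamma)\to N(Q)$. In fact you supply more detail than the paper does, carefully spelling out the Kan fibration check, the identification of the fiber with $N(\Lambda)$, the translation back to group bounded cohomology via Remark~\ref{rem:bc:spaces:and:bc:groups}, the matching of the three hypotheses (finiteness of $Q$ gives finitely many simplices in all dimensions; $q=0$ is immediate; $A$ Banach together with $H^q_b(\Lambda;A)$ Banach gives $\UBC^q$ by Corollary~\ref{cor:matsumoto-morita}), and the correct flag that identifying the monodromy action with the conjugation action on $H^q_b(\Lambda;A)$ is the one step that requires genuine unwinding.
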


\begin{rem}
It is claimed in the literature (see~\cite[Proposition~4.2.2]{Burger_Monod_2002},\cite[Proposition~12.2.2]{monod}, \cite{echtler}) that there is an identification $E^{0,q}_2\cong H^0_b(Q;H^q_b(\Lambda;A))$. However, the proof of this claim seems to be incomplete and still requires the additional assumption that $H^q_b(\Lambda;A)$ is Banach. 
See also Remark~\ref{rem:non-BBC}.
\end{rem}

\subsection{(Co-)amenability}
Given a group extension $1 \to \Lambda \xrightarrow{i} \Gamma \to Q \to 1$, where~$Q$ is amenable, a transfer argument shows that the induced map $H_b^*(i)$ is injective in all degrees~\cite{Monod:Popa}. We deduce the corresponding statement for spaces from the Serre spectral sequence (Theorem~\ref{thm:sss}). 

\begin{defi}
Let~$\Gamma$ be a group. We say that a $\Gamma$-set~$S$ is \emph{amenable} if it admits a $\Gamma$-invariant mean, that is,
a $\Gamma$-invariant 
map $\mu \colon \ell^{\infty}(S) \to \R$ of Banach $\Gamma$-modules satisfying 
$\mu(1_S) = 1$ and $\mu(f)\ge 0$ for~$f\ge 0$.
\end{defi}

Note that any $\Gamma$-invariant map $\mu\colon \ell^\infty(S)\to \R$ of Banach $\Gamma$-modules satisfying $\mu(1_S)\neq 0$ gives rise to a $\Gamma$-invariant mean by taking an appropriate absolute value of~$\mu$ and normalizing (see~\cite[Proof of Proposition~3]{Monod:Popa}). In particular, a $\Gamma$-set~$S$ is amenable if and only if the Banach $\Gamma$-submodule 
$\R$ of $\ell^{\infty}(S)$ (given by the constant functions) is a retract. Similarly, we have:

\begin{rem}
Let $S$ be an amenable $\Gamma$-set and let~$A$ be a dual Banach $\Gamma$-module. Then a $\Gamma$-invariant mean $\mu \colon \ell^\infty(S) \to \R$ induces a map of Banach $\Gamma$-modules $\ell^{\infty}(S, A) \to A$ which has the inclusion of constant functions $A \to \ell^{\infty}(S, A)$ as a section.  
\end{rem}

\begin{example}
Given a normal subgroup $H$ of $\Gamma$, the $\Gamma$-set $\Gamma/H$ is amenable if and only if the group $\Gamma/H$ is an amenable group. 
In this case, the subgroup $H$ is called \emph{co-amenable} in $\Gamma$.  
\end{example}

\begin{prop}
\label{prop:co-amenability}
Let $F \xrightarrow{i} E \xrightarrow{p} B$ be a Serre fibration of path-connected topological spaces, where $\pi_1(B)$ is amenable. Let $\calA$ be a seminormed local coefficient system of dual Banach modules on $E$. Then the induced map
$$H^*_b(i)\colon H^*_b(E; \calA) \to H^*_b(F; i^*\calA)$$
 is split-injective. 
\end{prop}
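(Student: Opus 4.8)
The plan is to exhibit $H^*_b(i)$ as the composite of a split-injective change-of-coefficients map and an isomorphism, using the Serre spectral sequence through Corollary~\ref{cor:covering2}; this is the space-level analogue of the transfer argument for amenable quotients of groups, with amenability of $\pi_1(B)$ and the dual Banach hypothesis on $\calA$ providing the splitting.

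First I would replace $i\colon F\to E$ by a Serre fibration over $E$; by homotopy invariance of bounded cohomology this does not affect $H^*_b(F;i^*\calA)$, and the new fiber is the homotopy fiber of $i$, namely the loop space $\Omega B$. Since $B$ is path-connected, every path-component of $\Omega B$ is homotopy equivalent to the component $\Omega_0 B$ of the constant loop, whose homotopy groups $\pi_k(\Omega_0 B)\cong\pi_{k+1}(B)$ ($k\ge 1$) are abelian, hence amenable. Thus Corollary~\ref{cor:covering2}, applied to this fibration with the dual Banach coefficient system $i^*\calA$, yields an isomorphism
\[
    H^*_b\bigl(E;\calH^0_b(\Omega B;i^*\calA)\bigr)\xrightarrow{\ \cong\ }H^*_b(F;i^*\calA).
\]
I would then identify the coefficient system $\calH^0_b(\Omega B;i^*\calA)$ on $E$ with the coinduced system $e\mapsto\ell^\infty\bigl(\pi_1(B),\calA(e)\bigr)$, with $\pi_1(E)$ permuting the index set through $p_*\colon\pi_1(E)\to\pi_1(B)$ and acting on $\calA(e)$ as prescribed. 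The point is that the monodromy of $i^*\calA$ along a component of $\Omega B$ is governed by the connecting homomorphism $\pi_2(B)\to\pi_1(F)$, whose image lies in $\ker(i_*\colon\pi_1(F)\to\pi_1(E))$ and hence acts trivially on $i^*\calA$; so $H^0_b$ of a component equals the full module $\calA(e)$, while $\pi_0(\Omega B)\cong\pi_1(B)$ supplies the index set.

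It remains to check two things. First, that the constant-function inclusion $\calA\hookrightarrow\calH^0_b(\Omega B;i^*\calA)$ of coefficient systems on $E$ induces, after composing with the isomorphism above, exactly $H^*_b(i)$. This is an Eckmann--Shapiro-type compatibility of the canonical (edge) map of the spectral sequence — of the same kind used in the excerpt to derive the Eckmann--Shapiro lemma from Corollary~\ref{cor:covering} — and to pin it down cleanly I would pass to the pullback $\widetilde E=E\times_B\widetilde B$ of the universal cover of $B$, a covering of $E$ with discrete fiber $\pi_1(B)$, for which the statement is standard. Second, that the $\pi_1(B)$-invariant mean supplies a retraction $\calH^0_b(\Omega B;i^*\calA)\to\calA$ of coefficient systems on $E$: since $\calA$ takes values in dual Banach modules and $\pi_1(B)$ is amenable, such a mean exists and, being $\pi_1(B)$-invariant, is compatible with the $\pi_1(E)$-action (which factors through $\pi_1(B)$ on the index set); this is precisely the Remark immediately preceding the statement. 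Granting these, the change-of-coefficients map $H^*_b(E;\calA)\to H^*_b\bigl(E;\calH^0_b(\Omega B;i^*\calA)\bigr)$ is split injective, hence so is $H^*_b(i)$. The main obstacle is the coefficient-system bookkeeping in the middle step, i.e.\ the identification of $\calH^0_b(\Omega B;i^*\calA)$ with the coinduced system and the verification that the canonical isomorphism of Corollary~\ref{cor:covering2}, precomposed with the constant-function inclusion, really is $H^*_b(i)$; the functional-analytic input (that the invariant mean descends to a genuine map of the relevant dual Banach modules) is the content of the cited Remark and should be routine.
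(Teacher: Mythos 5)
Your proposal is correct and follows essentially the same route as the paper: replace $i$ by a Kan/Serre fibration with fiber $\Omega B$, apply Corollary~\ref{cor:covering2} to the dual Banach coefficient system to obtain the isomorphism $H^*_b(E;\calH^0_b(\Omega B;\,\cdot\,))\cong H^*_b(F;i^*\calA)$, identify the coefficient system with $\ell^\infty(\pi_1(B,b),\calA(e))$, and use the $\pi_1(B)$-invariant mean to retract onto the constant functions. The one point where you deviate is in verifying that the constant-function inclusion of coefficient systems, post-composed with that isomorphism, really is $H^*_b(i)$: the paper settles this in one stroke by taking the comparison map of spectral sequences between the homotopy fiber sequences $\Omega B\to F\to E$ and $\ast\to E\to E$ (both over $E$, related by the basepoint inclusion into $\Omega B$), whereas you propose passing to the cover $\widetilde E = E\times_B\widetilde B$; your route should also work but amounts to re-proving part of Corollary~\ref{cor:covering2}, and the comparison-square argument is the cleaner way to pin down the edge-map compatibility.
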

\begin{proof} We consider the spectral sequence in bounded cohomology for the induced homotopy fiber sequence $\Omega B \xrightarrow{j} F \xrightarrow{i} E$ (after a choice of basepoint in $B$). The path-components of the loop space $\Omega B$ have amenable (in fact abelian) fundamental groups. 
Then Corollary~\ref{cor:covering2} (applied to the associated Serre fibration) yields canonical isomorphisms:
$$H^*_b(E; \calH^0_b(\Omega B; j^*i^*\calA)) \cong H^*_b(F; i^*\calA).$$
The comparison map to the homotopy fiber sequence $\ast \xrightarrow{e} E \to E$ yields a commutative square:
\[\begin{tikzcd}
    H^*_b(E; \calH^0_b(\Omega B; j^*i^*\calA)) \ar{r}{\cong} & H^*_b(F; i^*\calA)  \\
H^*_b(E; \calH^0_b(\ast; e^*\calA)) \ar{u} \ar{r}{\cong} & H^*_b(E; \calA) \ar{u}[swap]{H^*_b(i)} 
\end{tikzcd}\]
The coefficient module $\calH^0_b(\Omega B; j^*i^*\calA)$ can be identified with the Banach $\pi_1(E,e)$-module 
$\ell^{\infty}(\pi_1(B,b), \calA(e))$, where $p(e)=b$ and $\pi_1(E, e)$ acts on $\pi_1(B, b)$ and on~$\calA(e)$ in the obvious way. Moreover, the left map above is induced by the canonical inclusion $\calA(e) \to \ell^{\infty}(\pi_1(B, b), \calA(e))$. Since $\pi_1(B, b)$ is amenable (as $\pi_1(E, e)$-set) and $\calA(e)$ is a dual Banach module, it follows that there is a retraction (as $\pi_1(E, e)$-modules)
$$\ell^{\infty}(\pi_1(B, b), \calA(e)) \to \calA(e).$$
As a consequence, the left map admits a left inverse, and therefore, so does $H^*_b(i)$, as required. 
\end{proof}

\subsection{Simplicial volume of manifold bundles}
\label{sec:sv}
Simplicial volume is a homotopy invariant of oriented closed connected manifolds introduced by Gromov~\cite{vbc}. 

\begin{defi}[Simplicial volume]
Let $M$ be an oriented closed connected $n$-manifold and let $[M] \in H_n(M; \R)$ denote the real fundamental class of $M$. The \emph{simplicial volume of~$M$} is 
\[
\|M\| \coloneqq \inf \Big\{\sum_{i = 1}^k |a_i| \, \Big| \, c=\sum_{i=1}^k a_i \sigma_i\in C_n(M;\R), [c] = [M] \Big\} \in [0, +\infty).
\]
\end{defi}

By the duality principle~\cite{vbc, Frigerio:book}, the simplicial volume~$\|M\|$ is positive if and only if the comparison map $\comp^n\colon H_b^n(M; \R) \to H^n(M; \R)$ in degree~$n$ is surjective.
In particular, the simplicial volume of manifolds with amenable fundamental is zero.
The precise value of the simplicial volume (when non-zero) is known only in a few cases, e.g., for hyperbolic manifolds~\cite{Thurston, vbc} and products of hyperbolic surfaces~\cite{bucher2008simplicial}.

We consider the simplicial volume of manifolds arising as the total space of a fiber bundle $F^{n-d}\to E^n\to B^{d}$ of oriented closed connected manifolds of the respective dimensions.
For the trivial bundle $F\times B$, a classical computation~\cite{vbc, Bucher:fiber} yields the inequalities
\begin{equation}
\label{eqn:sv product}
    \|F\|\cdot \|B\|\le \|F\times B\|\le \binom{n}{d}\cdot \|F\|\cdot \|B\|.
\end{equation}
In particular, the right inequality in~\eqref{eqn:sv product} shows: if~$\|F\|=0$ or~$\|B\|=0$, then $\|F\times B\|=0$.
This implication is not true in general for (non-trivial) fiber bundles, e.g., witnessed by hyperbolic 3-manifolds arising as mapping tori of surfaces.
We show that the implication holds under additional assumptions:

\begin{prop}
\label{prop:sv vanishing}
    Let $f\colon E^n\to B^d$ be a fiber bundle of oriented closed connected manifolds with fiber~$F$.
    Suppose that~$H^i_b(F;\R)=0$ for all~$i\in \{n-d+1,\ldots,n\}$.
    If~$\|F\|=0$ or~$\|B\|=0$, then~$\|E\|=0$.
    \begin{proof}
        Since~$H^i_b(F;\R)=0$ for all~$i\in \{n-d+1,\ldots,n\}$, the entries~$E_2^{p,q}$ of the Serre spectral sequence for bounded cohomology are trivial for all~$q\in \{n-d+1,\ldots,n\}$.
        It follows that there are maps
        \[
            H^n_b(E;\R)\onto E_\infty^{d,n-d}\into E_2^{d,n-d}.
        \]
        The comparison map of spectral sequences (Proposition~\ref{prop:comparison ss}) yields a commutative diagram
        \[\begin{tikzcd}
            H^n_b(E;\R)\ar{r}\ar{dd} & E_2^{d,n-d}\ar{d} \\
            & H^d_b(B;\calH^{n-d}_b(F;\R))\ar{d} \\
            H^n(E;\R)\ar{r}{\cong} & H^d(B;\calH^{n-d}(F;\R))
        \end{tikzcd}\]
        The map $H^d_b(B;\calH^{n-d}_b(F;\R))\to H^d(B;\calH^{n-d}(F;\R))$ factorizes as follows:
        \[\begin{tikzcd}
            H^d_b(B;\calH^{n-d}_b(F;\R))\ar{r}\ar{d} & H^d_b(B;\calH^{n-d}(F;\R)) \cong H^d_b(B ; \R) \ar{d}
            \\
            H^d(B;\calH^{n-d}_b(F;\R))\ar{r} & H^d(B;\calH^{n-d}(F;\R)) \cong H^d(B; \R).
        \end{tikzcd}\]
        If $\|F\|=0$, then the top map is trivial; if $\|B\|=0$, then the right map is trivial. In both cases, the map $H^d_b(B;\calH^{n-d}_b(F;\R))\to H^d(B;\calH^{n-d}(F;\R))$ is trivial, and hence the comparison map $H^n_b(E;\R)\to H^n(E;\R)$ is trivial.
    \end{proof}
\end{prop}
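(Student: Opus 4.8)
The plan is to deduce $\|E\|=0$ from the duality principle. Since $E$ is an oriented closed connected $n$-manifold, $H^n(E;\R)\cong\R$, so it suffices to show that the comparison map $\comp^n\colon H^n_b(E;\R)\to H^n(E;\R)$ is the zero map (equivalently, not surjective). The tools are the Serre spectral sequence in bounded cohomology of the fiber bundle $f$ (Theorem~\ref{thm:Serre fibrations}), which I denote $(E_r)_r$, together with its comparison map to the ordinary Serre spectral sequence $(D_r)_r$ (Proposition~\ref{prop:comparison ss}).

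First I would use the hypothesis to annihilate a band of the $E_2$-page. Working over $\R$, every $C^\bullet_b(F_y;\R)$ is a complex of Banach modules, and $H^i_b(F_y;\R)\cong H^i_b(F;\R)=0$ for $i\in\{n-d+1,\dots,n\}$ (all fibers being homeomorphic to $F$), so by Corollary~\ref{cor:matsumoto-morita} each $C^\bullet_b(F_y;\R)$ satisfies $\UBC^i$ in these degrees; hence Theorem~\ref{thm:Serre fibrations}(ii) gives $E_2^{p,q}\cong H^p_b(B;\calH^q_b(F;\R))=0$ for all $p$ whenever $q\in\{n-d+1,\dots,n\}$. Along the diagonal $p+q=n$ this forces $E_\infty^{p,n-p}=0$ for $p\le d-1$ and leaves no nonzero incoming differential at the spot $(d,n-d)$, so the filtration of $H^n_b(E;\R)$ produces a surjection $H^n_b(E;\R)\onto E_\infty^{d,n-d}$ and an injection $E_\infty^{d,n-d}\into E_2^{d,n-d}\cong H^d_b(B;\calH^{n-d}_b(F;\R))$.

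Next I would run the dual bookkeeping for $(D_r)_r$: since $B$ is a closed $d$-manifold, $H^p(B;-)=0$ for $p>d$, and since $F$ is a closed $(n-d)$-manifold, $H^q(F;\R)=0$ for $q>n-d$, so on the diagonal $p+q=n$ the only surviving term is the corner term $D_2^{d,n-d}\cong H^d(B;\calH^{n-d}(F;\R))$, whence $H^n(E;\R)\cong H^d(B;\calH^{n-d}(F;\R))$; as this is nonzero, the rank-one local system $\calH^{n-d}(F;\R)$ is trivial, $\cong\R$. Feeding both pictures into Proposition~\ref{prop:comparison ss}, the map $\comp^n$ is identified, up to the above isomorphisms, with the composite
\[ H^n_b(E;\R)\onto E_\infty^{d,n-d}\into E_2^{d,n-d}\cong H^d_b(B;\calH^{n-d}_b(F;\R))\longrightarrow H^d(B;\calH^{n-d}(F;\R)), \]
and by the same proposition the last arrow factors through $H^d_b(B;\calH^{n-d}(F;\R))\cong H^d_b(B;\R)$ as the coefficient change induced fiberwise by $\comp^{n-d}_{F_y}$, followed by the comparison map $\comp^d_B$ of $B$.

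Finally I would conclude by the dichotomy: if $\|B\|=0$ then $\comp^d_B=0$ by the duality principle for $B$ (as $H^d(B;\R)\cong\R$), so the composite vanishes; if $\|F\|=0$ then $\comp^{n-d}_{F_y}=0$ for every fiber $F_y\cong F$ by the duality principle for $F$, so the coefficient change $\calH^{n-d}_b(F;\R)\Rightarrow\calH^{n-d}(F;\R)$ is the zero natural transformation and the composite again vanishes. In either case $\comp^n=0$, hence $\|E\|=0$. The step needing the most care is the compatibility bookkeeping in the third paragraph: checking that the edge maps for $H^n_b(E;\R)$, the collapse of the ordinary spectral sequence along the $n$-th diagonal, and the comparison map $c_2$ of Proposition~\ref{prop:comparison ss} all assemble into a single commuting diagram, so that $\comp^n$ genuinely factors through the coefficient change on fiber cohomology and through $\comp^d_B$; the remainder is a formal application of the duality principle and the vanishing hypothesis.
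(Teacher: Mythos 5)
Your proof follows the same strategy as the paper: annihilate the band $q\in\{n-d+1,\dots,n\}$ of the $E_2$-page of the bounded Serre spectral sequence, deduce a surjection $H^n_b(E;\R)\onto E_\infty^{d,n-d}$ and an injection $E_\infty^{d,n-d}\into E_2^{d,n-d}$, and then use Proposition~\ref{prop:comparison ss} together with the collapse of the ordinary spectral sequence to factor $\comp^n$ through the comparison map $H^d_b(B;\calH^{n-d}_b(F;\R))\to H^d(B;\calH^{n-d}(F;\R))$, which vanishes by the duality principle under either hypothesis.

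One detail should be corrected. You assert $E_2^{d,n-d}\cong H^d_b(B;\calH^{n-d}_b(F;\R))$. This identification requires $C^\bullet_b(F_y;\R)$ to satisfy $\UBC^{n-d}$ (Theorem~\ref{thm:Serre fibrations}(ii)), which is not implied by the hypothesis: Corollary~\ref{cor:matsumoto-morita} gives $\UBC^i$ for $i\in\{n-d+1,\dots,n\}$ because $H^i_b(F;\R)=0$ there, but it says nothing about degree $n-d$, where $H^{n-d}_b(F;\R)$ may fail to be Banach. Fortunately the isomorphism is not needed: Proposition~\ref{prop:comparison ss} always provides a factorization $E_2^{d,n-d}\to H^d_b(B;\calH^{n-d}_b(F;\R))\to D_2^{d,n-d}$ with no UBC assumption, and that map suffices to push the edge map of the abutment into $H^d_b(B;\calH^{n-d}_b(F;\R))$. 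Replacing $\cong$ with this comparison arrow makes your argument exactly the one in the paper.
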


The right inequality in~\eqref{eqn:sv product} shows: if~$\|F\|>0$ and~$\|B\|>0$, then~$\|F\times B\|>0$.
It seems to be unknown whether this implication holds in general for (non-trivial) fiber bundles.
We show that the implication holds under stronger assumptions:

\begin{prop}
\label{prop:sv positive}
    Let $f\colon E^n\to B^d$ be a fiber bundle of oriented closed connected manifolds with fiber~$F$.
    Suppose that the following hold:
    \begin{enumerate}[label=\enum]
        \item $C^\bullet_b(F;\R)$ satisfies~$\UBC^q$ for all~$q\in \{0,1,\ldots,n-d\}$;
        \item $H^p_b(B;\calH^q_b(F;\R))=0$ for all~$p,q$ with $p+q\in \{n,n+1\}$ and~$q\le n-d-1$;
        \item the map $H^d_b(B;\calH^{n-d}_b(F;\R))\to H^d(B;\calH^{n-d}(F;\R))$ is non-trivial.
    \end{enumerate}
    Then we have $\|E\|>0$.
    \begin{proof}
        Under the given assumptions, the entries~$E_2^{p,q}$ of the Serre spectral sequence for bounded cohomology (Theorem~\ref{thm:sss}) are trivial for all~$p,q$ with $p+q\in \{n,n+1\}$ and $q\le n-d-1$.
        It follows that there are maps
        \[
            E_2^{d,n-d}\onto E_\infty^{d,n-d}\into H^n_b(E;\R).
        \]
        Moreover, we may identify $E^{d,n-d}_2\cong H^d_b(B;\calH^{n-d}_b(F;\R))$.
        The comparison map of spectral sequences (Proposition~\ref{prop:comparison ss}) yields a commutative diagram
        \[\begin{tikzcd}
            H^d_b(B;\calH^{n-d}_b(F;\R))\ar{r}\ar{d} & H^n_b(E;\R)\ar{d} \\
            H^d(B;\calH^{n-d}(F;\R))\ar{r}{\cong} & H^n(E;\R) 
        \end{tikzcd}\]
        By assumption, the left vertical map is non-trivial and hence the right vertical map is non-trivial.
    \end{proof}
\end{prop}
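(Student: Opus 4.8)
The plan is to combine the duality principle for simplicial volume with the Serre spectral sequence and the comparison map of spectral sequences (Proposition~\ref{prop:comparison ss}). Since $E$ is an oriented closed connected $n$-manifold, $H^n(E;\R)\cong \R$, so by the duality principle it suffices to show that the comparison map $\comp^n\colon H^n_b(E;\R)\to H^n(E;\R)$ is nontrivial. The manifold bundle $f$ is in particular a Serre fibration, so Theorem~\ref{thm:Serre fibrations} (applied to $\Sing(f)$, with $\R$-coefficients) provides a bounded-cohomology spectral sequence converging to $H^*_b(E;\R)$; assumption~(i) guarantees via case~(iii) that $E^{p,q}_2\cong H^p_b(B;\calH^q_b(F;\R))$ for all $p$ and all $q\le n-d$.

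Next I would locate the class at position $(d,n-d)$ and follow it to $H^n_b(E;\R)$. By assumption~(ii), $E^{p,q}_2=0$ whenever $p+q\in\{n,n+1\}$ and $q\le n-d-1$. For $r\ge 2$ the outgoing differential $d_r\colon E^{d,n-d}_r\to E^{d+r,n-d-r+1}_r$ lands in a subquotient of $E^{d+r,n-d-r+1}_2=0$ (here $p+q=n+1$ and $q=n-d-r+1\le n-d-1$), hence is zero; therefore $E^{d,n-d}_2$ surjects onto $E^{d,n-d}_\infty$. Likewise $E^{p,n-p}_2=0$ for $p>d$ (here $q=n-p\le n-d-1$), so the filtration stage $F^{d+1}H^n_b(E;\R)$ vanishes and $E^{d,n-d}_\infty$ is a submodule of $H^n_b(E;\R)$. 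Composing, and using the identification $E^{d,n-d}_2\cong H^d_b(B;\calH^{n-d}_b(F;\R))$ (assumption~(i) in degree $q=n-d$), I obtain a canonical map $H^d_b(B;\calH^{n-d}_b(F;\R))\to H^n_b(E;\R)$.

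On the ordinary side the bookkeeping is automatic: since $F$ is an $(n-d)$-manifold and $B$ a $d$-manifold, the ordinary Serre spectral sequence (Example~\ref{ex:ordinary ss}) has $H^p(B;\calH^q(F;\R))=0$ unless $p\le d$ and $q\le n-d$, so in total degree $n$ the only surviving term is $D^{d,n-d}_2$, all differentials there vanish, and $D^{d,n-d}_2\cong H^n(E;\R)$. The comparison map of spectral sequences (Proposition~\ref{prop:comparison ss}) then produces a commutative square
\[\begin{tikzcd}
H^d_b(B;\calH^{n-d}_b(F;\R)) \ar{r}\ar{d} & H^n_b(E;\R)\ar{d}{\comp^n} \\
H^d(B;\calH^{n-d}(F;\R)) \ar{r}{\cong} & H^n(E;\R)
\end{tikzcd}\]
whose bottom arrow is an isomorphism and whose left arrow is exactly the map of assumption~(iii), nontrivial by hypothesis. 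A diagram chase then forces $\comp^n$ to be nontrivial, hence $\|E\|>0$.

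The main obstacle is the spectral-sequence accounting in the second paragraph: verifying that assumption~(ii), with its range $p+q\in\{n,n+1\}$, is precisely what makes the $(d,n-d)$-entry survive to $E_\infty$ as a \emph{quotient} of $E_2$ while simultaneously sitting in the bottom filtration step of $H^n_b(E;\R)$, together with checking that the $E_2$-identification from Theorem~\ref{thm:sss} is available exactly on the positions used (which is why assumption~(i) is imposed on the whole range $0\le q\le n-d$ and not only at $q=n-d$). Everything else is a routine application of the duality principle and of Proposition~\ref{prop:comparison ss}.
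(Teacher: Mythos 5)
Your proof is correct and follows essentially the same route as the paper's: both reduce to showing the comparison map $\comp^n\colon H^n_b(E;\R)\to H^n(E;\R)$ is nontrivial via duality, both use assumptions~(i) and~(ii) to show that the entries $E_2^{p,q}$ vanish for $p+q\in\{n,n+1\}$ and $q\le n-d-1$ so that $E_2^{d,n-d}\onto E_\infty^{d,n-d}\into H^n_b(E;\R)$, and both conclude from the comparison map of spectral sequences (Proposition~\ref{prop:comparison ss}) and assumption~(iii). You simply spell out the differential/filtration bookkeeping behind the surjection and injection more explicitly than the paper does.
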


\medskip
\noindent \textbf{Acknowledgments.} 
We thank Clara L\"oh for helpful discussions.

The first and third authors were supported by the SFB~1085 - \emph{Higher Invariants} (Universit\"at Regensburg) funded by the DFG.
The second author was supported by the ERC ``Definable Algebraic Topology" DAT - Grant Agreement no.~101077154. 
This work has been funded by the European Union - NextGenerationEU under the National Recovery and Resilience Plan (PNRR) - Mission 4 Education and research - Component 2 From research to business - Investment 1.1 Notice Prin 2022 -  DD N.~104 del 2/2/2022, from title ``Geometry and topology of manifolds", proposal code 2022NMPLT8 - CUP J53D23003820001.


\bibliographystyle{alpha}
\bibliography{svbibnew}

\setlength{\parindent}{0cm}

\end{document}